\newtheorem{theorem}{Theorem}[section]
\newtheorem{corollary}[theorem]{Corollary}
\newtheorem{proposition}[theorem]{Proposition}
\newtheorem{lemma}[theorem]{Lemma}
\theoremstyle{definition}    
\newtheorem{definition}[theorem]{Definition}
\theoremstyle{remark}
\newtheorem{remark}[theorem]{Remark}
\newtheorem{example}[theorem]{Example}
\newcommand{\pair}[2]{\langle #1, #2 \rangle}
\newcommand{\ignore}[1]{}
\newcommand{\ol}[1]{\overline{#1}}
\newcommand{\ti}[1]{\widetilde{#1}}
\newcommand{\wh}[1]{\widehat{#1}}
\newcommand{\st}[1]{\mathsf{#1}}
\newcommand{\scr}[1]{\mathscr{#1}}
\newcommand{\tn}[1]{\textnormal{#1}}
\renewcommand{\i}{{\mathrm{i}}}
\def\Ad{\ensuremath{\textnormal{Ad}}}
\def\g{\ensuremath{\mathfrak{g}}}
\def\k{\ensuremath{\mathfrak{k}}}
\def\t{\ensuremath{\mathfrak{t}}}
\def\h{\ensuremath{\mathfrak{h}}}
\def\n{\ensuremath{\mathfrak{n}}}
\def\hvee{\ensuremath{\textnormal{h}^\vee}}
\def\L{\ensuremath{\mathcal{L}}}
\def\O{\ensuremath{\mathcal{O}}}
\def\R{\ensuremath{\mathcal{R}}}
\def\T{\ensuremath{\mathcal{T}}}
\def\E{\ensuremath{\mathcal{E}}}
\def\B{\ensuremath{\mathcal{B}}}
\def\P{\ensuremath{\mathcal{P}}}
\def\Y{\ensuremath{\mathcal{Y}}}
\def\K{\ensuremath{\mathcal{K}}}
\def\X{\ensuremath{\mathcal{X}}}
\def\J{\ensuremath{\mathcal{J}}}
\def\O{\ensuremath{\mathcal{O}}}
\def\M{\ensuremath{\mathcal{M}}}
\def\c{\ensuremath{\mathsf{c}}}
\def\dirac{\ensuremath{\slashed{\partial}}} 
\def\sY{\ensuremath{\mathsmaller{\mathcal{Y}}}}
\def\bC{\ensuremath{\mathbb{C}}}
\def\bR{\ensuremath{\mathbb{R}}}
\def\bZ{\ensuremath{\mathbb{Z}}}
\def\bN{\ensuremath{\mathbb{N}}}
\def\bB{\ensuremath{\mathbb{B}}}
\def\bK{\ensuremath{\mathbb{K}}}
\def\End{\ensuremath{\textnormal{End}}}
\def\Aut{\ensuremath{\textnormal{Aut}}}
\def\Hom{\ensuremath{\textnormal{Hom}}}
\def\ran{\ensuremath{\textnormal{ran}}}
\def\.{\ensuremath{\cdot}}
\def\ker{\ensuremath{\textnormal{ker}}}
\def\supp{\ensuremath{\textnormal{supp}}}
\def\id{\ensuremath{\textnormal{id}}}
\def\Cl{\ensuremath{\textnormal{Cl}}}
\def\Cliff{\ensuremath{\textnormal{Cliff}}}
\def\index{\ensuremath{\textnormal{index}}}
\def\vol{\ensuremath{\textnormal{vol}}}
\def\dim{\ensuremath{\textnormal{dim}}}
\def\aff{\ensuremath{\textnormal{aff}}}
\def\KK{\ensuremath{\textnormal{KK}}}
\def\Sym{\ensuremath{\textnormal{Sym}}}
\def\pt{\ensuremath{\textnormal{pt}}}
\def\dom{\ensuremath{\textnormal{dom}}}
\def\K{\ensuremath{\textnormal{K}}}
\def\Bott{\ensuremath{\textnormal{Bott}}}
\def\Cyl{\ensuremath{\textnormal{Cyl}}}
\def\wt{\ensuremath{\textnormal{wt}}}
\def\rot{\ensuremath{\tn{rot}}}
\def\Irr{\ensuremath{\tn{Irr}}}
\def\hGamma{\ensuremath{\widehat{\Gamma}}}
\def\hgamma{\ensuremath{\widehat{\gamma}}}
\def\hLambda{\ensuremath{\widehat{\Lambda}}}
\begin{document}
\sloppy
\title{Quantization of Hamiltonian loop group spaces}
\author{Yiannis Loizides}
\author{Yanli Song}

\begin{abstract}
We prove a Fredholm property for spin-c Dirac operators $\st{D}$ on non-compact manifolds satisfying a certain condition with respect to the action of a semi-direct product group $K\ltimes \Gamma$, with $K$ compact and $\Gamma$ discrete.  We apply this result to an example coming from the theory of Hamiltonian loop group spaces.  In this context we prove that a certain index pairing $[\X] \cap [\st{D}]$ yields an element of the formal completion $R^{-\infty}(T)$ of the representation ring of a maximal torus $T \subset H$; the resulting element has an additional antisymmetry property under the action of the affine Weyl group, indicating $[\X] \cap [\st{D}]$ corresponds to an element of the ring of projective positive energy representations of the loop group.
\end{abstract}
\maketitle
\vspace{-0.5cm}

\section{Introduction}
Let $H$ be a compact, connected Lie group, and let $LH$ denote the loop group.  Let $(\M,\omega_{\M},\Phi_{\M})$ be a Hamiltonian $LH$-space, with proper moment map $\Phi_{\M} \colon \M \rightarrow L\h^\ast$.  Many well-known results for Hamiltonian $H$-spaces have parallels for Hamiltonian $LH$-spaces.  Examples include the convexity theorem \cite{MWVerlindeFactorization}, the cross-section theorem \cite{MWVerlindeFactorization}, and Kirwan surjectivity \cite{BottTolmanWeitsman}.  In this paper we describe and study an index-theoretic `quantization' of $\M$, in the spirit of the `quantization' of Hamiltonian $H$-spaces via the equivariant index of twisted Dirac operators, and the ${[Q,R]=0}$ Theorem (cf. \cite{GuilleminSternbergConjecture,MeinrenkenSymplecticSurgery,TianZhang,
ParadanRiemannRoch,ParadanVergneSpinc,HochsSongSpinc}).

Given a prequantum line bundle $L$ over $\M$ which is equivariant for a suitable $U(1)$ central extension $\wh{LH}$ of the loop group, one expects to be able to associate to $(\M,L)$ a \emph{positive energy representation} $Q(\M,L)$ of $\wh{LH}$, or more generally a formal difference of such representations.  The map $(\M,L) \mapsto Q(\M,L)$ is expected to have several desirable properties; for example, an integral affine coadjoint orbit should be sent to the corresponding irreducible positive energy representation.  By analogy with finite-dimensional Hamiltonian $H$-spaces, one might like to define $Q(\M,L)$ by constructing a suitable Dirac operator on $\M$, twisting by $L$, and then taking the index.  Since $\M$ is infinite dimensional, it is unclear how one would make sense of this procedure. 

One alternative was explored in \cite{MeinrenkenKHomology}.  The gauge action of the based loop group $\Omega H \subset LH$ on $\M$ is free and proper.  The quotient $M=\M/\Omega H$ is a smooth finite-dimensional compact $H$-manifold, known as a \emph{quasi-Hamiltonian} $H$-space \cite{AlekseevMalkinMeinrenken}.  In fact $(\M,\omega_{\M},\Phi_{\M})$ can be recovered from suitable data on $M$, giving a 1-1 correspondence between Hamiltonian loop group spaces and quasi-Hamiltonian spaces.  One might declare the `quantization' of $\M$ to be a suitable `quantization' of the finite-dimensional space $M$.  There are natural examples of quasi-Hamiltonian spaces that do not possess any spin-c structure, and so a suitable Dirac operator is lacking.  It is possible to get around this obstacle by using \emph{twisted} K-homology; this was done in \cite{MeinrenkenKHomology}, and the connection with loop group representations was made via the Freed-Hopkins-Teleman Theorem.  A $[Q,R]=0$ theorem was also proved in this context \cite{AMWVerlinde} using symplectic cutting techniques. 

In this paper we explore another option, involving the equivariant index of an operator on a finite-dimensional submanifold of $\M$.  In a previous paper with E. Meinrenken \cite{LMSspinor} we constructed a suitable finite-dimensional `global transversal' $\Y \subset \M$, as well as a canonical spinor module $S \rightarrow \Y$.  In this paper we prove that the corresponding spin-c Dirac operator $\st{D}$ acting on sections of $S$ twisted by $L$ defines an element $[\st{D}]$ in a suitable K-homology group.  Taking the `index pairing' or `cap product' with a suitable K-cohomology class $\st{x}$, we obtain an element $\st{x} \cap [\st{D}]$ of the formal completion of the representation ring $R^{-\infty}(T)$ of a maximal torus $T \subset H$.  The corresponding multiplicity function is anti-symmetric under the action of the affine Weyl group, indicating that it is the numerator of the Weyl-Kac character formula for a (graded) positive energy representation---and we take this to be our definition of $Q(\M,L)$.

In another article \cite{LoizidesGeomKHom} the first author proved that $Q(\M,L)$ coincides with the image under the Freed-Hopkins-Teleman isomorphism of the quantization of $M=\M/\Omega H$ defined in terms of twisted K-homology, showing that these two definitions are consistent with each other.  We discuss this relationship briefly in Section \ref{ssec:OtherApproaches}.  There is a third approach \cite{SongDiracLoopGroup}, due to the second author, that we hope to return to in the future.

As motivation for our definition, let us briefly discuss the analogous construction in finite dimensions, where it is equivalent to the usual definition (see also \cite[Section 6]{LMSspinor}).  Let $\mu \colon N \rightarrow \h^\ast$ be a compact Hamiltonian $H$-space with prequantum line bundle $L$.  Choosing a compatible almost complex structure, one obtains a spinor module $S=\wedge T^\ast_{0,1} N \otimes L$ and spin-c Dirac operator $\dirac^S$.  The $H$-equivariant index $Q(N,L)=\index(\dirac^S)$ is an element of the representation ring $R(H)$.  Let
\[ \Delta(t)=\prod_{\alpha>0} (1-t^{-\alpha}) \]
be the Weyl denominator, the (super) character of the $\bZ_2$-graded representation $\wedge \n_-$ of $T$, where $\n_-$ is the sum of the negative root spaces.  The $T$-equivariant index of $\dirac^S$ twisted by the trivial bundle with fibres $\wedge \n_-$ is
\begin{equation} 
\label{eqn:DiracN}
\index(\dirac^S\otimes \wedge \n_-)=\Delta \cdot Q(N,L)|_T \in R(T);
\end{equation}
it may also be viewed as an \emph{index pairing} $\pair{\dirac^S}{\wedge \n_-}$ between a K-homology class $[\dirac^S]$ and a K-theory class $[\wedge \n_-]$.

The Bott element $\Bott(\n_-)$ for $\h^\ast/\t^\ast$ is an element of the K-theory group $\K^0_T(\h^\ast/\t^\ast)$, which may be described in terms of a bundle morphism
\[ (\h^\ast/\t^\ast) \times \wedge^{\tn{even}} \n_- \rightarrow (\h^\ast/\t^\ast) \times \wedge^{\tn{odd}} \n_-\]
invertible everywhere except at the origin.  Let $q$ be the composition of the moment map $N \rightarrow \h^\ast$ with the projection $\h^\ast \rightarrow \h^\ast/\t^\ast$.  Since $N$ is compact, we may replace $\wedge \n_-$ in \eqref{eqn:DiracN} with the pullback of the Bott element, giving the index pairing $\pair{\dirac^S}{q^\ast\Bott(\n_-)}$.  As $\Bott(\n_-)$ is supported at $0 \in \h^\ast/\t^\ast$, we may restrict to the pre-image $\Y=q^{-1}(B)$, where $B$ is a small ball in $\h^\ast/\t^\ast$ containing the origin.  Thus
\begin{equation} 
\label{eqn:AltDef}
\Delta \cdot Q(N,L)|_T=\pair{\dirac^S_\Y}{q^\ast \Bott(\n_-)} 
\end{equation}
where $\dirac^S_\Y$ denotes the restriction of $\dirac^S$ to the open subset $\Y$, and the right hand side is an index pairing on $\Y$.  In the special case that $\mu$ is transverse to $\t^\ast$ (i.e. $0$ is regular value of $q$), the fibre $\X=\mu^{-1}(\t^\ast)=q^{-1}(0)$ is a smooth submanifold with trivial normal bundle isomorphic to $\X \times (\h/\t)$.  Using the spinor module $\wedge \n_+$ for $\Cliff(\h/\t)$, by the 2-out-of-3 property we obtain a spinor module $S_{\X}=\Hom_{\Cliff(\h/\t)}(\wedge \n_+,S)$ for $\X$, and the index pairing on the right hand side of \eqref{eqn:AltDef} equals the usual index of a Dirac operator for $S_{\X}$.

Turning this around, we see that we can \emph{define} $Q(N,L)$ as the unique element of $R(H)$ satisfying \eqref{eqn:AltDef}; equivalently, the index pairing in \eqref{eqn:AltDef} gives the Weyl numerator of $Q(N,L)$.  We will see that a partial analogue of this definition works well for Hamiltonian loop group spaces $\Phi_{\M}\colon \M \rightarrow L\h^\ast$.  In this new context, $\Y$ will be a finite dimensional submanifold of $\M$, playing the role of a small `thickening' of the possibly singular subset $\X=\Phi_{\M}^{-1}(\t^\ast)$, analogous to its role in the finite dimensional setting.  We will define the quantization of $(\M,L)$ as the element of the fusion ring whose Weyl-Kac numerator is given by an index pairing on $\Y$ as on the right hand side of \eqref{eqn:AltDef}.  In the special case $\Phi_{\M}$ is transverse to $\t^\ast \subset L\h^\ast$, we can again replace the index pairing with the index of a Dirac operator on $\X$.

One attractive feature of our definition is that it is amenable to study with the `Witten deformation' (cf. \cite{TianZhang,ParadanRiemannRoch,WittenNonAbelian}).  In a sequel to this paper \cite{LSWittenDef} we study this deformation in detail, and obtain a formula in the spirit of Paradan \cite{ParadanRiemannRoch} for the index pairing:
\begin{equation} 
\label{eqn:NormSqrFormula}
\st{x}\cap [\st{D}]=\sum_{\beta \in W\cdot \B} \index(\sigma_{\beta,\st{x}} \otimes \Sym(\nu_\beta)),
\end{equation}
where $\st{x}$ is a suitable K-theory class, $\B \subset \t_+$ indexes components of the critical set of the norm-square of the moment map, $\sigma_{\beta,\st{x}}$ is a transversally elliptic symbol on the fixed-point set $\Y^\beta$, and $\nu_\beta$ is the normal bundle to $\Y^\beta$ in $\Y$ equipped with a `$\beta$-polarized' complex structure.  An interesting feature is that the index set $\B$ is infinite, and \eqref{eqn:NormSqrFormula} contains infinitely many non-zero terms.  Analogous to \cite{ParadanRiemannRoch}, \eqref{eqn:NormSqrFormula} leads to a new proof of the $[Q,R]=0$ Theorem for Hamiltonian loop group spaces.  The formula \eqref{eqn:NormSqrFormula} is the K-theoretic analogue of a formula for (twisted) Duistermaat-Heckman distributions studied in \cite{DHNormSquare}.

The main step in proving that $[\st{D}]$ defines a suitable K-homology class involves an interesting interplay between two group actions, one compact and one discrete, on a non-compact manifold.  This argument can be carried out more generally, and we do this in Section \ref{sec:FredholmProperty}.

Throughout we use the language and basic techniques of analytic K-homology, and, in a few places, Kasparov's KK-theory.  We provide a brief introduction to some aspects of KK-theory in Section \ref{sec:KHomologyPrelim}.  We have also included two short appendices.  The first describes the interaction between group automorphisms and the descent map in $\KK$-theory.  This discussion is used to prove the anti-symmetry properties of the index pairing.  The second appendix describes a common generalization of two well-known consequences of the Rellich Lemma.  This result is used in determining a cycle representing the intersection product.

\vspace{0.3cm}

\noindent \textbf{Acknowledgements.} We especially thank Eckhard Meinrenken for many helpful discussions and encouragement, and for providing feedback on an earlier draft.  We also thank Nigel Higson for helpful discussions.  Y. Song is supported by NSF grant 1800667.

\vspace{0.3cm}

\noindent \textbf{Notation.}
We often deal with vector spaces/bundles that are $\bZ_2$-graded.  In this context, $[a,b]$ will denote the \emph{graded commutator} of the linear operators $a$, $b$.  We use the Koszul sign rule for tensor products.

For a finite-dimensional real Euclidean vector space $(E,g)$, the Clifford algebra $\Cl(E)$ denotes the complex $\bZ_2$-graded algebra generated by odd elements $e \in E$ subject to relations
\[ [e_1,e_2]=e_1e_2+e_2e_1=-2g(e_1,e_2), \qquad \forall e_1,e_2 \in E.\]
If $E \rightarrow Y$ is a Euclidean vector bundle, $\Cl(E)$ is the bundle with fibres $\Cl(E_y)$, $y \in Y$.

For a Hilbert space $H$ the inner product (resp. norm) will be denoted by $(\cdot,\cdot)$ (resp. $\|\cdot \|$).  The $C^\ast$ algebras of bounded (resp. compact) operators on $H$ will be denoted $\bB(H)$ (resp. $\bK(H)$). 

For a Riemannian manifold $(Y,g)$, we sometime use the metric $g$ to identify $TY \simeq T^\ast Y$.  If $E \rightarrow Y$ is a Hermitian vector bundle, the space of $L^2$ sections, denoted $L^2(Y,E)$, is a Hilbert space equipped with the inner product
\[ (u,v)=\int_Y \pair{u}{v} d\vol,\]
where $\pair{\cdot}{\cdot}$ denotes the Hermitian inner product on the fibres, and $d\vol$ is the Riemannian volume.  The corresponding norm will be denoted $\|\cdot \|$, while the point-wise norm will be denoted $|\cdot |$, hence
\[ \|u\|^2=\int_Y |u|^2 d\vol.\]

If $K$ is a compact Lie group with Lie algebra $\k$, we write $\Irr(K)$ for the set of isomorphism classes of irreducible representations of $K$, and $R(K)$ for the representation ring.  The formal completion $R^{-\infty}(K)$ of $R(K)$ consists of formal infinite linear combinations of irreducibles $\pi \in \Irr(K)$ with coefficients in $\bZ$.

Let $G$ be a Lie group acting smoothly on a manifold $Y$.  For $g \in G$, $y \in Y$ we write $g.y$ for the action of $g$ on $y$.  For $\xi \in \g$ the vector field $\xi_Y$ on $Y$ is defined by
\[ \xi_Y(y)=\frac{d}{dt}\bigg|_0\exp(-t\xi).y.\]
The map $\xi \mapsto \xi_Y$ is a Lie algebra homomorphism.  Let $E \rightarrow Y$ be a $G$-equivariant vector bundle.  Then $g \in G$ acts on a section $e$ of $E$ by
\[ (g \cdot e)(y)=g.e(g^{-1}.y).\]

\section{$\KK$-theory and crossed products} \label{sec:KHomologyPrelim}
In this section we give a brief introduction to KK-theory and crossed products.  For readers unfamiliar with KK-theory, we should mention that we use only some of the more basic aspects, applied to geometrically-motivated examples of $C^\ast$-algebras.  For most of what we do, analytic K-homology (as described, for example, in the book of Higson and Roe\cite{HigsonRoe}) suffices; KK-cycles appear in a few places as a convenient means of defining index pairings.  In order for various constructions in $\KK$-theory to be well-behaved, one often requires that the two arguments are separable $C^\ast$-algebras; we will always assume this without mention below.  References for $\KK$-theory include \cite{Blackadar, KasparovNovikov, HigsonPrimer}. 

\subsection{$\KK$-theory.}
A (right) \emph{Hilbert} $B$-\emph{module} is a right $B$-module equipped with a $B$-sesquilinear map
\[ (\cdot,\cdot)_B \colon \E \times \E \rightarrow B \] 
called the $B$-\emph{valued inner product}, satisfying properties analogous to the properties of an inner product on a Hilbert space but with the field of scalars $\bC$ replaced with the $C^\ast$-algebra $B$, and such that $\E$ is complete with respect to the norm $\|e\|_{\E}=\|(e,e)_B\|_B^{1/2}$. 

By analogy with Hilbert spaces, one defines a $C^\ast$ algebra $\bB_B(\E)$ consisting of ($B$-linear) transformations which are `adjointable' relative to $(\cdot,\cdot)_B$, and a subalgebra $\bK_B(\E)$ as the closure of the linear span of the `finite rank' transformations $\theta_{e_1,e_2}\colon e \mapsto e_1\cdot (e_2,e)_B$, for $e_1,e_2 \in \E$.

When $B=\bC$, a Hilbert $B$-module is nothing but a Hilbert space.  The other prototypical example which we shall use is for the algebra $B=C_0(Y)$ of continuous functions vanishing at infinity on a locally compact space $Y$.  Let $E$ be a Hermitian vector bundle over $Y$.  The space $\E=C_0(Y,E)$ of continuous sections of $E$ vanishing at infinity is a Hilbert $C_0(Y)$-module, with the $C_0(Y)$-valued inner product given by the Hermitian structure.  In this case $\bB_B(\E)$ (resp. $\bK_B(\E)$) consists of continuous sections of $\End(E)$ which are bounded (resp. vanish at infinity).

Cycles for $\KK(A,B)$ are triples $(\E,\rho,F)$ consisting of a countably generated $\bZ_2$-graded Hilbert $B$-module, a graded $^\ast$-representation 
\[ \rho \colon A \rightarrow \bB_B(\E) \]
and an odd operator $F \in \bB_B(\E)$ such that for all $a \in A$
\begin{equation}
\label{ConditionOnF} 
\rho(a)(F-F^\ast), \quad \rho(a)(1-F^2), \quad [\rho(a),F] \in \bK_B(\E). 
\end{equation}
We often drop $\rho$ from the notation, as it is usually clear from the context.  

Cycles may be added by taking the direct sum of the Hilbert modules, representations, and operators.  A \emph{homotopy} between two cycles $(\E_i,\rho_i,F_i)$, $i=0,1$ is a cycle $(\E,\rho,F)$ for the pair $\big(A,B \otimes C([0,1])\big)$ such that the evaluation homomorphisms for $0,1 \in [0,1]$ recover $(\E_0,\rho_0,F_0)$, $(\E_1,\rho_1,F_1)$ respectively.  Homotopy defines an equivalence relation on the set of cycles.  This equivalence relation is generated by three simpler instances of homotopy of cycles\footnote{This is true if $A$, $B$ are separable, as we are assuming.}: (1) \emph{unitary equivalence}, given by an isomorphism $\E_1 \rightarrow \E_2$ intertwining all structures, (2) addition of \emph{degenerate cycles}, for which all three operators in \eqref{ConditionOnF} are $0$ for all $a \in A$, and (3) \emph{operator homotopy}, given by a family $(\E,\rho,F_t)$ of cycles, where $t \in [0,1] \mapsto F_t \in \bB_B(\E)$ is norm-continuous.

The abelian group $\KK(A,B)$ is the set of homotopy classes of cycles.  $\KK(A,B)$ is contravariant in $A$ and covariant in $B$, hence is a `bi-functor' to the category of abelian groups.  The special case $\K^0(A):=\KK(A,\bC)$ is the $\K$-\emph{homology} of $A$, while $\K_0(B):=\KK(\bC,B)$ is the $\K$-\emph{theory} of $B$.  $\KK$ is finitely additive in both arguments, and in fact
\begin{equation} 
\label{eqn:KHomologyAdditivity}
\KK(\oplus_i A_i,B)=\prod_i \KK(A_i,B) 
\end{equation}
for a countable direct sum.\footnote{Under some conditions on $A$ (in particular if $A=\bC$), $\KK(A,-)$ is countably additive, cf. \cite{Blackadar} Section 23.15.}

For $G$ a locally compact group and $G$-$C^\ast$ algebras $A$, $B$ there is a $G$-equivariant $\KK$-group denoted $\KK_G(A,B)$.  Its cycles consist of triples $(\E,\rho,F)$ similar to before, together with a continuous, isometric $G$-action on $\E$ compatible with the bimodule structure, such that $g \mapsto \Ad_g(F)$ is norm continuous and $F$ `almost commutes' with the $G$-action in the sense that $\rho(a)(\Ad_g(F)-F) \in \bK_B(\E)$ for all $g \in G$, $a \in A$.  If $G$ is compact, then by averaging one can assume $F$ commutes with the $G$-action.

One of the main features of $\KK$-theory is the intersection product
\[ \otimes \colon \KK(A,B) \times \KK(B,C) \rightarrow \KK(A,C), \]
which systematically generalizes a number of constructions in K-theory, including pull-backs, wrong-way maps, and index pairings of K-homology elements with K-theory elements.  In a couple of places in this paper we will use the intersection product in a mild way.  Given cycles $x_i=(\E_i,\rho_i,F_i)$, $i=1,2$ for the pairs $(A,B)$, $(B,C)$ respectively, there is a cycle $(\E,\rho,F)$ for the intersection product $[x_1]\otimes [x_2]$ with
\[ \E=\E_1 \wh{\otimes}_{\scriptscriptstyle B} \E_2, \qquad \rho=\rho_1\otimes 1 \]
where $\wh{\otimes}_B$ is a completion of the algebraic graded tensor product of $B$-modules (after possibly quotienting out by vectors of length $0$).  The operator $F$ is not uniquely determined by $F_1$, $F_2$, but there is a criterion for verifying that a given $F$ represents the intersection product.

\subsection{Unbounded cycles.}
It is sometimes easier to work with `unbounded' cycles, as introduced by Baaj-Julg \cite{BaajJulg} and further developed by Kucerovsky \cite{KucerovskyUnbounded}.  Additional references include \cite{Blackadar} and \cite[Appendix]{MislinValette}.

Unbounded self-adjoint operators on a Hilbert $B$-module are defined in a way analogous to the case of Hilbert spaces.  One important difference between Hilbert spaces and more general Hilbert $B$-modules is that Hilbert $B$-submodules need not have complements in general.  An unbounded self-adjoint operator on a Hilbert module $\E$ is called \emph{regular} if the orthogonal space to its graph is a complementary Hilbert submodule in $\E \oplus \E$.
\begin{definition}[\cite{BaajJulg}]
\label{def:Unbounded}
An \emph{unbounded} cycle for $\KK(A,B)$ is a triple $(\E,\rho,\st{D})$ consisting of a $\bZ_2$-graded Hilbert $B$-module $\E$, a graded $^\ast$-representation $\rho\colon A \rightarrow \bB_B(\E)$ and an odd unbounded regular self-adjoint operator $\st{D}$ such that (1) $\rho(a)(\st{D}^2+1)^{-1} \in \bK_B(\E)$, and (2) for $a$ in a dense subalgebra of $A$, $\rho(a)$ preserves the domain $\dom(\st{D})$ and $[\st{D},\rho(a)] \in \bB_B(\E)$.
\end{definition}
We describe a prototypical example in the next section.  Given an unbounded cycle $(\E,\rho,\st{D})$, one obtains a bounded cycle $(\E,\rho,F)$ with
\[ F=b(\st{D}), \qquad b(x)=x(1+x^2)^{-1/2}.\]
The K-homology class represented by the triple $(\E,\rho,\st{D})$ is defined to be the class represented by the `bounded transform' $(\E,\rho,F)$.

We will state a sufficient condition for an unbounded cycle to represent a KK product \cite{KucerovskyUnbounded}.  The general condition is somewhat technical-looking, but in our application later it will simplify (in particular, we only use the criterion in the special case in which the $C^\ast$ algebra $C=\bC$, in which case $\E$, $\E_2$ are ordinary Hilbert spaces).  Let $(\E_i,\rho_i,\st{D}_i)$, $i=1,2$ be cycles for the pairs of $C^\ast$ algebras $(A,B)$, $(B,C)$ respectively.  Let
\[ \E=\E_1 \wh{\otimes}_{\rho_2} \E_2, \qquad \rho=\rho_1\wh{\otimes} 1 \colon A \rightarrow \bB_C(\E).\]
For each $e \in \E_1$ define
\[ \T_e \colon e_2 \in \E_2 \mapsto e \wh{\otimes} e_2 \in \E.\]
Its adjoint $\T_e^\ast$ is
\[ \T_e^\ast \colon e_1\wh{\otimes} e_2 \mapsto \rho_2\big((e,e_1)_B\big)e_2 \]
where $(-,-)_B$ denotes the $B$-valued inner product on $\E_1$.
\begin{proposition}[Sufficient condition for products of unbounded cycles, \cite{KucerovskyUnbounded}]
\label{prop:SuffCond}
The unbounded cycle $(\E,\rho,\st{D})$ represents the Kasparov product of $[(\E_1,\rho_1,\st{D}_1)]$, $[(\E_2,\rho_2,\st{D}_2)]$ if the following conditions hold:
\begin{enumerate}
\item \emph{(Connection condition.)} There is a dense subset of $e \in \rho_1(A)\E_1$ such that
\[ \T_e \st{D}_2-(-1)^{\deg(e)}\st{D} \T_e, \qquad \T_e^\ast\st{D}-(-1)^{\deg(e)} \st{D}_2\T_e^\ast \]
extend to (bounded) adjointable operators from $\E_2$ to $\E$ (resp. $\E$ to $\E_2$).
\item \emph{(Semi-boundedness condition.)} The domain $\dom(\st{D}) \subset \dom(\st{D}_1\otimes 1)$, and there is a constant $k$ such that for $e \in \dom(\st{D})$, the following inequality holds between elements of the $C^\ast$ algebra $C$:
\[ \big(\st{D}e,(\st{D}_1\otimes 1)e\big)_C+\big((\st{D}_1\otimes 1)e,\st{D}e\big)_C \ge k(e,e)_C\]
\end{enumerate}  
\end{proposition}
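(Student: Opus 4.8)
The plan is to reduce to Kasparov's characterisation of the \emph{bounded} Kasparov product and then check that the connection and semi-boundedness hypotheses furnish its two requirements. Since $(\E,\rho,\st{D})$ is an unbounded cycle, it represents the same class as its bounded transform $(\E,\rho,F)$ with $F=b(\st{D})$; likewise put $F_i=b(\st{D}_i)$, and write $G=F_1\otimes 1$, an odd self-adjoint operator on $\E$. By Kasparov's criterion (cf.\ \cite{KasparovNovikov}), $(\E,\rho,F)$ represents $[(\E_1,\rho_1,\st{D}_1)]\otimes[(\E_2,\rho_2,\st{D}_2)]$ provided that (i) $F$ is an $F_2$-connection, i.e.\ for $e$ ranging over a dense subset of $\rho_1(A)\E_1$ both $\T_e F_2-(-1)^{\deg(e)}F\T_e$ and $\T_e^\ast F-(-1)^{\deg(e)}F_2\T_e^\ast$ lie in the relevant $\bK_C$; and (ii) for every $a\in A$ one has $\rho(a)[G,F]\rho(a)^\ast\ge 0$ modulo $\bK_C(\E)$, where $[\cdot,\cdot]$ denotes the graded commutator, so that $[G,F]=GF+FG$.

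First I would deduce (i) from the connection hypothesis, using the integral representation
\[ b(\st{D})=\frac{2}{\pi}\int_0^\infty\st{D}\,(1+\lambda^2+\st{D}^2)^{-1}\,d\lambda \]
together with its analogue for $b(\st{D}_2)$. Subtracting the two formulas and inserting $\T_e$ between the factors, the connection hypothesis lets one commute $\T_e$ past the unbounded operators up to bounded error terms; the resolvents are $O(\lambda^{-2})$ in norm, so that after multiplication by $\rho(a)$ the compactness built into Definition \ref{def:Unbounded}(1) renders each integrand compact while the decay in $\lambda$ ensures norm convergence of the integral. Thus $\T_e F_2-(-1)^{\deg(e)}F\T_e\in\bK_C$, and the $\T_e^\ast$-statement follows symmetrically, so $F$ is an $F_2$-connection.

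The substantive step is (ii), which is where the semi-boundedness hypothesis enters. That hypothesis says precisely that the symmetric operator $\st{D}\,(\st{D}_1\otimes 1)+(\st{D}_1\otimes 1)\,\st{D}$, defined on $\dom(\st{D})\subseteq\dom(\st{D}_1\otimes 1)$, is bounded below by $k$, and one must convert this into a lower bound modulo $\bK_C(\E)$ for $GF+FG$ after compression by $\rho(a)$. The idea is to expand $GF+FG$, via the two integral formulas, as a double integral over the pair of spectral parameters of resolvent sandwiches: the leading term is built from the anticommutator $\st{D}(\st{D}_1\otimes 1)+(\st{D}_1\otimes 1)\st{D}$, which the semi-boundedness hypothesis controls from below, whereas the error terms involve $\st{D}-\st{D}_1\otimes 1$ acting on $\dom(\st{D})$ and become compact, after compression by $\rho(a)$, by the connection estimate of the previous paragraph together with Definition \ref{def:Unbounded}(1). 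Summing the contributions yields $\rho(a)(GF+FG)\rho(a)^\ast\ge-(\text{compact})$. Throughout one uses regularity and self-adjointness of $\st{D}$, $\st{D}_1\otimes 1$ and $\st{D}^2+\st{D}_1^2\otimes 1$ to license these manipulations on the appropriate domains.

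I expect the positivity step (ii) to be the main obstacle: converting the domain-level quadratic inequality of the semi-boundedness hypothesis into an operator inequality modulo $\bK_C(\E)$ for the bounded transforms requires delicate resolvent estimates and a careful accounting of which error terms are genuinely compact rather than merely bounded, and this is the part of the argument that is not soft functional analysis. The connection step (i), by contrast, is a comparatively mechanical consequence of the integral formula for $b$. In the case needed later in this paper one has $C=\bC$, so that $\E$ and $\E_2$ are ordinary Hilbert spaces and $\bK_C(\E)$ is the usual ideal of compact operators, which simplifies several of these estimates; the general case is carried out in \cite{KucerovskyUnbounded}.
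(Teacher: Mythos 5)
First, note that the paper offers no proof of this proposition: it is quoted from Kucerovsky \cite{KucerovskyUnbounded} and used as a black box (and only in the special case $C=\bC$, in the proof of Theorem \ref{thm:CapProductResult}). So your attempt must be judged as a reconstruction of the cited proof. Your overall strategy is the correct one and is essentially Kucerovsky's: pass to bounded transforms, invoke the Connes--Skandalis sufficient condition for the bounded Kasparov product ($F$ an $F_2$-connection together with positivity of $\rho(a)[F_1\wh{\otimes}1,F]\rho(a)^\ast$ modulo $\bK_C(\E)$), and derive the connection property from hypothesis (a) via the integral formula $b(\st{D})=\tfrac{2}{\pi}\int_0^\infty \st{D}(1+\lambda^2+\st{D}^2)^{-1}\,d\lambda$.

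As a proof, however, the proposal has a genuine gap exactly where you flag one. Converting the quadratic-form inequality for the unbounded anticommutator $\st{D}(\st{D}_1\otimes 1)+(\st{D}_1\otimes 1)\st{D}$ on $\dom(\st{D})$ into the operator inequality $\rho(a)[F_1\wh{\otimes}1,F]\rho(a)^\ast\ge 0$ modulo compacts is the entire content of the theorem, and your paragraph on step (ii) describes a hoped-for bookkeeping (``leading term plus compact errors'') without exhibiting the decomposition or establishing compactness, or even boundedness, of a single remainder. Three points in particular are glossed over. First, $\st{D}_1\otimes 1$ is a priori defined only on $\dom(\st{D}_1)\odot\E_2$ and need not be self-adjoint and regular on $\E$; meanwhile the operator appearing in the Connes--Skandalis condition is $b(\st{D}_1)\wh{\otimes}1$, which is \emph{not} $b(\st{D}_1\otimes 1)$, so bridging the hypothesis (stated for $\st{D}_1\otimes 1$) and the conclusion (stated for $b(\st{D}_1)\wh{\otimes}1$) is itself part of the work. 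Second, the individual integrands $\st{D}(1+\lambda^2+\st{D}^2)^{-1}$ have norm of order $\lambda^{-1}$, so neither single integral converges in norm; only the \emph{difference} of integrands exhibits the $O(\lambda^{-2})$ decay you invoke, and verifying that cancellation is where hypothesis (a) actually enters. Third, commuting $\rho(a)$ past the resolvents into the position where Definition \ref{def:Unbounded}(1) yields compactness produces further commutator terms that must be controlled. None of this is fatal to the strategy --- it is what \cite{KucerovskyUnbounded} carries out --- but as written the decisive positivity step is deferred rather than proved, so the argument is an outline of the cited proof rather than a proof.
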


\subsection{Elliptic operators on complete manifolds.}
This section reviews some facts about $1^{st}$ order symmetric elliptic operators.  Such operators yield examples of unbounded K-homology cycles.  Proofs and further details can be found in \cite[Chapter 10]{HigsonRoe}.

Throughout this section, $\st{D}\colon C^\infty_c(\Y,S) \rightarrow C^\infty_c(\Y,S)$ denotes a $1^{st}$ order symmetric differential operator acting on sections of a Hermitian vector bundle over a Riemannian manifold $(\Y,g)$.  The symbol of $\st{D}$ will be denoted
\[ \sigma_{\st{D}} \colon T^\ast \Y \rightarrow \End(S).\] 
\begin{definition}
The \emph{propagation speed} of $\st{D}$ is
\[ c_{\st{D}}=\sup \{\|\sigma_{\st{D}}(y,v)\|:y \in \Y,v \in T_y^\ast \Y,\|v\|=1\}.\]
If $c_{\st{D}}<\infty$ then we say $\st{D}$ has \emph{finite propagation speed}.
\end{definition}

\begin{proposition}[Chernoff \cite{ChernoffSelfAdjoint}]
If $\Y$ is complete and $\st{D}$ has finite propagation speed then $\st{D}$ is essentially self-adjoint.
\end{proposition}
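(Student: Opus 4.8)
The plan is to follow Chernoff's strategy: build the ``wave group'' $U(t)=e^{\i t\st{D}}$ directly from the symmetric operator, use the finite propagation speed to make sense of it globally on the (possibly non-compact) complete manifold, and deduce from it that the deficiency subspaces $\ker(\st{D}^\ast\mp\i)$ vanish --- which is equivalent to essential self-adjointness of $\st{D}$ on $C^\infty_c(\Y,S)$.

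The two analytic inputs are energy estimates for the evolution equation $\partial_t u=\i\st{D}u$. First, for a solution $u(t,\cdot)$ that is smooth and compactly supported in the space variable, symmetry of $\st{D}$ and integration by parts give $\tfrac{d}{dt}\|u(t)\|^2=2\,\mathrm{Re}\,\big(\i\st{D}u(t),u(t)\big)=0$, using that $(\st{D}u,u)$ is real; so the $L^2$ norm is conserved. Localizing the same computation to a backward light cone gives finite propagation of supports: if $u(0)$ vanishes on a metric ball $B(p,r)$ then $u(t)$ vanishes on $B(p,\,r-c_{\st{D}}|t|)$. In particular a solution with $\supp u(0)\subseteq K$ compact stays supported in the closed $c_{\st{D}}|t|$-neighbourhood $K_{|t|}$ of $K$, and \emph{completeness enters precisely here}: by Hopf--Rinow (applied on each connected component) $K_{|t|}$ is again compact, so the solution stays compactly supported for all time, and the localized estimate also yields uniqueness.

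For existence I would exhaust $\Y$ by relatively compact open sets $\Y_1\subset\Y_2\subset\cdots$ with $\bigcup_n\Y_n=\Y$ (e.g.\ sublevel sets of a proper smooth exhaustion function at regular values). The bundle $S$, the symbol $\sigma_{\st{D}}$ and the coefficients of $\st{D}$ are local data over $\Y_n$, so $\st{D}|_{\Y_n}$ extends to a symmetric first-order differential operator $\wh{\st{D}}_n$ acting on a Hermitian bundle over some \emph{closed} Riemannian manifold $N_n$ containing $\Y_n$ as an open subset (e.g.\ double the compact manifold-with-boundary $\overline{\Y_n}$, reflect the data, and symmetrize); on a closed manifold $\wh{\st{D}}_n$ is essentially self-adjoint (the classical compact case), so $e^{\i t\wh{\st{D}}_n}$ is defined via the spectral theorem. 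Given $u_0\in C^\infty_c(\Y,S)$ with $\supp u_0\subseteq K$ and $T$ chosen so that $K_T\subset\Y_n$, the curve $t\mapsto e^{\i t\wh{\st{D}}_n}u_0$ solves the evolution equation and stays supported in $K_{|t|}$ for $|t|\le T$; by finite propagation speed it does not depend on $n$ or on the chosen closing-up, and iterating in time-steps of length $T$ defines $U(t)u_0$ for all $t\in\bR$. Energy conservation makes $U(t)$ isometric on the dense subspace $C^\infty_c(\Y,S)$, so it extends to a unitary on $L^2(\Y,S)$; one checks $U(t)$ is a strongly continuous one-parameter group, and that for $u_0\in C^\infty_c(\Y,S)$ the curve $U(t)u_0$ lies in $\dom(\overline{\st{D}})$ with $\tfrac{d}{dt}U(t)u_0=\i\,\overline{\st{D}}\,U(t)u_0$. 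Finally, if $v\in\ker(\st{D}^\ast-\i)$ and $u_0\in C^\infty_c(\Y,S)$, the function $f(t):=(U(t)u_0,v)$ satisfies $f'(t)=\big(\i\,\overline{\st{D}}\,U(t)u_0,\,v\big)=f(t)$ --- moving $\i\overline{\st{D}}$ across to $v$ by the adjoint and using $\st{D}^\ast v=\i v$ --- so $f(t)=e^t f(0)$; but $|f(t)|\le\|u_0\|\,\|v\|$ is bounded, forcing $f(0)=(u_0,v)=0$ and hence $v=0$. Replacing $\i$ by $-\i$ gives $f(t)=e^{-t}f(0)$ and $\ker(\st{D}^\ast+\i)=0$, so $\st{D}$ is essentially self-adjoint.

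The step I expect to be the main obstacle is the existence and patching of the global group $U(t)$ from the spectrally-defined local evolutions: choosing the closings-up $N_n$, checking that the locally defined solutions genuinely agree on overlaps (this is exactly where the localized energy/uniqueness estimate is essential), and verifying that the resulting $U(t)u_0$ is regular enough in $(t,\cdot)$ to justify $\tfrac{d}{dt}U(t)u_0=\i\,\overline{\st{D}}\,U(t)u_0$ and the final pairing identity. The remaining ingredients --- the two energy estimates, the Hopf--Rinow compactness input, Stone's theorem, and the deficiency-index endgame --- are routine once the wave group is in hand.
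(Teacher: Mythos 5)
The paper does not give a proof of this proposition; it is stated as a citation to Chernoff \cite{ChernoffSelfAdjoint}. Your argument is a faithful and correct reproduction of Chernoff's original proof: construct the wave group $U(t)$ by solving locally on compact exhaustions and patching via finite propagation speed (with completeness, via Hopf--Rinow, ensuring compact supports stay compact), then kill the deficiency subspaces $\ker(\st{D}^\ast\mp\i)$ by the exponential-growth/boundedness contradiction.
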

\begin{example}\label{ex:DiracOp}
An important special case is a spin-c Dirac operator.  Let $S \rightarrow \Y$ be a \emph{spinor module} over an even-dimensional Riemannian manifold $(\Y,g)$, i.e. $S$ is a $\bZ_2$-graded Hermitian vector bundle equipped with an isomorphism $\c \colon \Cl(T\Y) \xrightarrow{\sim} \End(S)$, where $\Cl(T\Y)$ denotes the Clifford algebra bundle.  Let $\nabla^S$ be a Hermitian connection on $S$, which is compatible with the Clifford action in the sense that
\[ \nabla^S_X(\c(Y)s)=\c(Y)\nabla_X^S s+\c(\nabla_XY)s, \quad s \in C^\infty(Y,S)\]
where $\nabla$ is the Levi-Civita connection, $X$, $Y$ smooth vector fields.  The Dirac operator $\st{D}\colon C^\infty_c(Y,S) \rightarrow C^\infty_c(Y,S)$ is defined by the composition
\[ C^\infty_c(Y,S) \xrightarrow{\nabla} C_c^\infty(Y,T^\ast \Y \otimes S) \xrightarrow{g^\sharp} C_c^\infty(Y,T\Y \otimes S) \xrightarrow{\c} C^\infty(Y,S).\]
Usually we omit $g^\sharp$ from the notation, the identification $TY \simeq T^\ast Y$ being understood.  Let $e_n$, $n=1,..,\dim(\Y)$ be a local orthonormal frame, then
\[\st{D}=\sum_{n=1}^{\dim(\Y)} \c(e_n)\nabla_{e_n}^S\]
locally.  The symbol of $\st{D}$ is
\[ \sigma_{\st{D}}(\xi)=\i\c(\xi).\]
Since $\sigma_{\st{D}}(\xi)^2=|\xi|^2$, $\st{D}$ is elliptic with $c_{\st{D}}=1$.  If $\Y$ is complete, then $\st{D}$ is essentially self-adjoint.
\end{example}

The following is a well-known consequence of the Rellich lemma (cf. \cite[Proposition 10.5.2]{HigsonRoe}).
\begin{proposition}
\label{RellichLemma}
Let $\st{D}$ be an essentially self-adjoint, $1^{st}$ order elliptic operator.  For any $\chi \in C_0(\bR)$ and $f \in C_0(\Y)$ the operator
\[ f \cdot \chi(\st{D})\colon L^2(\Y,S) \rightarrow L^2(\Y,S) \]
is compact.
\end{proposition}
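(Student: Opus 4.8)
The plan is to reduce the statement, by a density and $C^\ast$-algebra argument, to the single assertion that $f(\st{D}+\i)^{-1}$ is compact whenever $f$ has compact support, and then to deduce this from interior elliptic regularity together with the classical Rellich lemma. For the reduction, note that $\bK\big(L^2(\Y,S)\big)$ is a norm-closed two-sided ideal in $\bB\big(L^2(\Y,S)\big)$ and that $\|f\chi(\st{D})-f'\chi'(\st{D})\|\le\|f-f'\|_\infty\|\chi\|_\infty+\|f'\|_\infty\|\chi-\chi'\|_\infty$; hence it suffices to prove compactness for $f$ ranging over the dense subalgebra $C_c(\Y)\subset C_0(\Y)$ and $\chi$ over a generating set of the $C^\ast$-algebra $C_0(\bR)$. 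The resolvent function $g_0(x)=(x+\i)^{-1}$ separates the points of $\bR$ and vanishes nowhere, so by Stone--Weierstrass it generates $C_0(\bR)$; moreover the resolvent identity gives $(\st{D}-\i)^{-1}-(\st{D}+\i)^{-1}=2\i(\st{D}+\i)^{-1}(\st{D}-\i)^{-1}$, so $f(\st{D}-\i)^{-1}=f(\st{D}+\i)^{-1}\big(1+2\i(\st{D}-\i)^{-1}\big)$ is $f(\st{D}+\i)^{-1}$ times a bounded operator. Consequently, once $f(\st{D}+\i)^{-1}\in\bK$ is established for all $f\in C_c(\Y)$, the set $\{\chi\in C_0(\bR):f\chi(\st{D})\in\bK\text{ for all }f\in C_c(\Y)\}$ is a norm-closed subalgebra containing $g_0$ and $\bar g_0$, hence equals $C_0(\bR)$; a final approximation in $\|\cdot\|_\infty$ then removes the compact-support hypothesis on $f$.

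It remains to show $f(\st{D}+\i)^{-1}$ is compact for $f\in C_c(\Y)$. I would pick $\phi\in C_c^\infty(\Y)$ with $\phi\equiv 1$ on $\supp f$, so that $f(\st{D}+\i)^{-1}=f\cdot\phi(\st{D}+\i)^{-1}$, and factor this operator as
\[ L^2(\Y,S)\xrightarrow{\ \phi(\st{D}+\i)^{-1}\ }H^1_{\supp\phi}(\Y,S)\xrightarrow{\ \iota\ }L^2(\Y,S)\xrightarrow{\ f\ }L^2(\Y,S), \]
where $H^1_{\supp\phi}(\Y,S)$ denotes the $H^1$-sections supported in the compact set $\supp\phi$. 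The first arrow is bounded: $(\st{D}+\i)^{-1}$ and $\st{D}(\st{D}+\i)^{-1}=1-\i(\st{D}+\i)^{-1}$ are bounded on $L^2$, and the interior a priori elliptic estimate for the first-order elliptic operator $\st{D}$, localized to a relatively compact neighbourhood of $\supp\phi$ (using that $[\st{D},\phi]=\sigma_{\st{D}}(d\phi)$ is a bounded zeroth-order operator), gives $\|\phi v\|_{H^1}\le C\big(\|\st{D}v\|_{L^2}+\|v\|_{L^2}\big)$ for $v\in\dom(\st{D})$; this is the one place where ellipticity of $\st{D}$ enters. The last arrow is bounded by $\|f\|_\infty$. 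The middle arrow $\iota$, the inclusion of compactly supported $H^1$-sections into $L^2$, is \emph{compact} by the Rellich lemma (reduce to $\bR^n$ using finitely many charts and a partition of unity subordinate to a relatively compact cover of $\supp\phi$). A composition of a bounded, a compact, and a bounded operator is compact, which finishes the proof.

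The only genuine content is the interplay of interior elliptic regularity with the Rellich lemma; everything else is formal. Two small points deserve care. First, $f$ is merely continuous, so multiplication by $f$ does not preserve $H^1$; this is why the factorization applies $f$ only \emph{after} the Rellich step, as a bounded operator on $L^2$. Second, although $\Y$ is non-compact, every estimate that is needed is local around the compact set $\supp\phi$, so the standard local elliptic estimate and the classical Rellich theorem apply with no further hypothesis on $\Y$ --- essential self-adjointness of $\st{D}$ (guaranteed here by completeness and finite propagation speed) is used only to give meaning to $(\st{D}\pm\i)^{-1}$ and to $\dom(\st{D})$.
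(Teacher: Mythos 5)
Your proof is correct and is essentially the standard argument; the paper itself does not prove this proposition but simply cites Higson--Roe, Proposition 10.5.2, and your reduction via the resolvent functions $g_0,\bar g_0$ and Stone--Weierstrass, followed by the factorization through $H^1$-sections supported in a fixed compact set and the classical Rellich--Kondrachov compactness, is precisely the textbook route. Two minor wording points worth tightening for a clean write-up: the estimate you quote, $\|\phi v\|_{H^1}\le C(\|\st{D}v\|_{L^2}+\|v\|_{L^2})$, is most cleanly obtained from the G\aa rding-type a~priori estimate $\|\phi v\|_{H^1}\le C(\|\st{D}(\phi v)\|_{L^2}+\|\phi v\|_{L^2})$ for sections supported in a fixed compact set, combined with the boundedness of the zeroth-order commutator $[\st{D},\phi]$; and when you invoke Stone--Weierstrass you should observe (as you implicitly do by including both $g_0$ and $\bar g_0$) that the closed algebra generated by $\{g_0,\bar g_0\}$ is already a $\ast$-subalgebra of $C_0(\bR)$ separating points and vanishing nowhere, hence equals $C_0(\bR)$.
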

Using Proposition \ref{RellichLemma}, one shows (cf. \cite[Theorem 10.6.5]{HigsonRoe}):
\begin{proposition}
\label{prop:KHomologyClass}
Let $\st{D}$ be an essentially self-adjoint, odd, $1^{st}$-order elliptic operator acting on sections of a $\bZ_2$-graded Hermitian vector bundle $S$.  Let $\rho$ be the representation of $C_0(Y)$ on $L^2(\Y,S)$ by multiplication operators.  The triple $(L^2(\Y,S),\rho,\st{D})$ is an unbounded cycle representing a class $[\st{D}] \in \K^0(C_0(\Y))$.
\end{proposition}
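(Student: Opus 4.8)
The plan is to verify directly that the triple $(L^2(\Y,S),\rho,\st{D})$, with $\st{D}$ denoting its self-adjoint closure, satisfies the requirements of Definition \ref{def:Unbounded} for the pair of $C^\ast$-algebras $(C_0(\Y),\bC)$, and then to invoke the bounded-transform construction recalled after that definition to produce the class $[\st{D}]\in\KK(C_0(\Y),\bC)=\K^0(C_0(\Y))$. First I would dispose of the structural points. Since $B=\bC$, a Hilbert $B$-module is just a Hilbert space, $\bB_B(\E)=\bB(L^2(\Y,S))$ and $\bK_B(\E)=\bK(L^2(\Y,S))$, and ``countably generated'' means separable, which holds because $\Y$ is second countable. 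Multiplication by functions gives a genuine graded $\ast$-representation $\rho$ of the (trivially graded) algebra $C_0(\Y)$ by even operators. The operator $\st{D}$ is odd by hypothesis, hence so is its closure; the closure is self-adjoint by the essential self-adjointness assumption, and it is automatically regular since in a Hilbert space the graph of a self-adjoint operator is a closed, hence complemented, subspace. Thus everything reduces to conditions (1) and (2) of Definition \ref{def:Unbounded}.

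For condition (1), I would use the continuous functional calculus for the self-adjoint operator $\st{D}$: the function $\chi(x)=(1+x^2)^{-1}$ lies in $C_0(\bR)$ and $(\st{D}^2+1)^{-1}=\chi(\st{D})$. Hence for $a=f\in C_0(\Y)$ we have $\rho(f)(\st{D}^2+1)^{-1}=f\cdot\chi(\st{D})$, which is compact by Proposition \ref{RellichLemma}. Only ellipticity and essential self-adjointness are used, so the Dirac normalization of Example \ref{ex:DiracOp} is not needed here.

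For condition (2), I would take the dense subalgebra $C^\infty_c(\Y)\subset C_0(\Y)$. For $f\in C^\infty_c(\Y)$, on the core $C^\infty_c(\Y,S)$ the graded commutator $[\st{D},\rho(f)]$ (which here is the ordinary commutator, since $\rho(f)$ is even) is a zeroth-order operator: it is pointwise multiplication by the section $-\i\,\sigma_{\st{D}}(df)$ of $\End(S)$, i.e.\ $\c(df)$ in the Dirac case, which is bounded because $df$ is continuous with compact support; hence $[\st{D},\rho(f)]$ extends to an element of $\bB(L^2(\Y,S))$. To see that $\rho(f)$ preserves $\dom(\st{D})$, I would approximate $u\in\dom(\st{D})$ by $u_n\in C^\infty_c(\Y,S)$ with $u_n\to u$ and $\st{D}u_n\to\st{D}u$ in $L^2$ (possible since $C^\infty_c(\Y,S)$ is a core for the closure), and then note that $fu_n\to fu$ while $\st{D}(fu_n)=f\,\st{D}u_n+[\st{D},f]u_n$ converges in $L^2$; therefore $fu\in\dom(\st{D})$ and $[\st{D},\rho(f)]$ acts on all of $\dom(\st{D})$ by the bounded operator above. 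This establishes that $(L^2(\Y,S),\rho,\st{D})$ is an unbounded cycle, and its bounded transform $(L^2(\Y,S),\rho,b(\st{D}))$, $b(x)=x(1+x^2)^{-1/2}$, is the Kasparov cycle whose class is the desired $[\st{D}]\in\K^0(C_0(\Y))$.

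The analytic heart of the argument is already packaged in Proposition \ref{RellichLemma} (the Rellich-type compactness statement), so little remains beyond bookkeeping with the functional calculus and with domains. The one point requiring a little care is the domain-preservation claim: this is precisely where essential self-adjointness enters, guaranteeing that $C^\infty_c(\Y,S)$ is a core for the self-adjoint closure so that the approximation argument is legitimate. I do not anticipate any genuine obstacle beyond this.
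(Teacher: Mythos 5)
Your argument is correct and is precisely the standard verification that the paper delegates to \cite[Theorem 10.6.5]{HigsonRoe}: condition (1) via $(\st{D}^2+1)^{-1}=\chi(\st{D})$ with $\chi\in C_0(\bR)$ and the Rellich-type compactness of Proposition \ref{RellichLemma}, and condition (2) via the bounded commutator $[\st{D},f]=-\i\,\sigma_{\st{D}}(df)$ together with the core argument for domain preservation, with regularity automatic over $B=\bC$. This is the same route as the paper's (cited) proof, so there is nothing to add.
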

If $\Y$ is even-dimensional and complete, then by Example \ref{ex:DiracOp}, a Dirac operator $\st{D}$ acting on sections of a spinor module satisfies the conditions of Proposition \ref{prop:KHomologyClass}.

\begin{proposition}[cf. \cite{HigsonRoe} Lemma 10.5.5]
\label{PropagationDistance}
Let $\st{D}$ be an essentially self-adjoint $1^{st}$-order operator with propagation speed $c_{\st{D}}<\infty$.  Suppose $\chi \in C_0(\bR)$ has (distributional) Fourier transform $\wh{\chi}$ supported in $(-r,r)$ for some $r>0$.  Let $f,g$ be continuous bounded functions on $\Y$ such that the Riemannian distance between $\supp(f)$ and $\supp(g)$ is greater than $r\cdot c_{\st{D}}$.  Then
\[ f \chi(\st{D})g=0.\]
\end{proposition}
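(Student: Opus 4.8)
The plan is to reduce the statement to the \emph{finite propagation speed of the wave group} $U(t)=e^{it\st{D}}$ generated by the (essentially) self-adjoint operator $\st{D}$, and then to recover $\chi(\st{D})$ from the $U(t)$ via Fourier inversion. First I would isolate the key estimate: for any closed set $A\subseteq \Y$, any $u\in L^2(\Y,S)$ with $u=0$ a.e.\ off $A$, and any $t\in\bR$,
\[ \supp\big(U(t)u\big)\ \subseteq\ \{y\in\Y: d(y,A)\le |t|\,c_{\st{D}}\}. \]
By density of $C^\infty_c(\Y,S)$ in $L^2$ and continuity of $U(t)$ (together with the fact that ``support contained in a fixed closed set'' is a closed condition in $L^2$), it suffices to establish this for $u_0\in C^\infty_c(\Y,S)$; by elliptic regularity $u(s):=U(s)u_0$ is then smooth in space for each $s$ and solves $\partial_s u=i\st{D}u$.

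The heart of the argument is the energy estimate on shrinking light cones. Fix $y_0$ with $R_0:=d(y_0,\supp u_0)>0$, and for $0\le s<R_0/c_{\st{D}}$ set $B_s=\{y:d(y,y_0)\le R_0-c_{\st{D}}s\}$ and $E(s)=\int_{B_s}|u(s,y)|^2\,d\vol$. Then $E(0)=0$. Differentiating, the boundary contribution from the moving domain is controlled by the transport theorem: since $|\nabla d(\cdot,y_0)|=1$, the level sets move inward at speed $c_{\st{D}}$, contributing $-c_{\st{D}}\int_{\partial B_s}|u|^2\,dA$; and the interior term $2\,\tn{Re}\int_{B_s}\langle i\st{D}u,u\rangle$ equals $i$ times $\big(\langle\st{D}u,u\rangle_{B_s}-\langle u,\st{D}u\rangle_{B_s}\big)$, which by Green's identity for the symmetric first-order operator $\st{D}$ is a boundary integral whose integrand is bounded pointwise by $c_{\st{D}}|u|^2$ (the relevant operator there is the principal symbol $\sigma_{\st{D}}$ evaluated on a \emph{unit} conormal). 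Hence
\[ E'(s)\ \le\ c_{\st{D}}\!\int_{\partial B_s}\!|u|^2\,dA\ -\ c_{\st{D}}\!\int_{\partial B_s}\!|u|^2\,dA\ =\ 0. \]
Since $E\ge 0$ and $E(0)=0$, we get $E\equiv 0$, so $u(s,\cdot)$ vanishes on the interior of $B_s$, in particular near $y_0$, whenever $s<R_0/c_{\st{D}}$. (Time-reversal, i.e.\ replacing $\st{D}$ by $-\st{D}$, handles $s<0$.) This is precisely the displayed inclusion, and it is the only place the finiteness of $c_{\st{D}}$ is used.

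With the propagation bound in hand, I would write $\chi(\st{D})=\tfrac{1}{2\pi}\int_{-r}^{r}\wh{\chi}(t)\,U(t)\,dt$, valid by the spectral theorem and Fourier inversion; if $\wh{\chi}$ is only a compactly supported distribution one first reduces to the case $\wh{\chi}\in C^\infty_c(-r,r)$ by mollifying, using that $\supp\wh{\chi}$ is a compact subset of the open interval $(-r,r)$ and that $\chi\mapsto\chi(\st{D})$ is norm continuous, the strict inequality $d(\supp f,\supp g)>r\,c_{\st{D}}$ leaving the necessary room. Then for any $u\in L^2(\Y,S)$ the section $gu$ is supported in $\supp g$, so by the estimate $U(t)(gu)$ is supported in the $|t|c_{\st{D}}$-neighbourhood of $\supp g$; for $|t|<r$ this lies in the $r\,c_{\st{D}}$-neighbourhood of $\supp g$, which is disjoint from $\supp f$ by hypothesis. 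Therefore $f\,U(t)(gu)=0$ for all $|t|<r$, and
\[ f\chi(\st{D})g\,u\ =\ \frac{1}{2\pi}\int_{-r}^{r}\wh{\chi}(t)\,f\,U(t)\,g\,u\,dt\ =\ 0. \]
As $u$ is arbitrary, $f\chi(\st{D})g=0$. The substantive step is the energy estimate of the second paragraph; the remaining points — elliptic regularity of $u(s)$, smoothness of $\partial B_s$ for a.e.\ $s$ via Sard (enough since $E$ is Lipschitz), and the distributional Fourier inversion — are routine bookkeeping, and the whole argument is a mild adaptation of the standard unit-propagation-speed proof for Dirac-type operators (cf.\ \cite[Lemma 10.5.5]{HigsonRoe}).
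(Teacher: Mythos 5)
Your proof is correct, and it reproduces the standard argument: an energy estimate on shrinking light cones establishes finite propagation speed for the wave group $e^{it\st{D}}$, and then $\chi(\st{D})$ is recovered from $e^{it\st{D}}$ by Fourier inversion, with a mollification step to handle a merely distributional $\widehat{\chi}$. The paper itself does not prove this proposition at all—it cites Higson--Roe (Lemma 10.5.5 together with the finite propagation speed results of their Section 10.3), whose proof proceeds exactly along these lines, so your argument matches the paper's source. The one small inaccuracy is the appeal to Sard's theorem for regularity of $\partial B_s$: the distance function to $y_0$ is only Lipschitz (and not even $C^1$ past the cut locus), so Sard is not directly available. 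The cleaner device, used in the reference, is to replace the sharp indicator of $B_s$ by a smooth non-increasing cutoff $\phi\big(d(\cdot,y_0)+c_{\st{D}}s\big)$ of the distance and differentiate $\int \phi(\cdots)\,|u(s,\cdot)|^2\,d\vol$ under the integral sign, using only $|\nabla d|\le 1$ a.e.; this eliminates boundary integrals and any need for the level sets to be smooth. Since you explicitly flag this as routine bookkeeping, it does not affect the substance of your argument.
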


\subsection{Crossed products and the descent map.}\label{sec:CrossProdDescent}
Let $G$ be a locally compact group with Haar measure $dg$, and let $A$ be a $G$-$C^\ast$ algebra.  One can define a new $C^\ast$ algebra $G \ltimes A=C^\ast(G,A)$ called the \emph{crossed product} algebra.  In this section we briefly describe this algebra, following \cite{Blackadar, KasparovNovikov}.

The space $C_c(G,A)$ of continuous, compactly supported functions $a\colon G\rightarrow A$ can be made into an involutive algebra with the convolution product
\[ (a_1\cdot a_2)(g)=\int_G a_1(g_1)g_1.a_2(g_1^{-1}g)\,\,dg_1, \]
and involution $a^\ast(g)=g.(a(g^{-1}))^\ast \mu(g)^{-1}$, where $\mu$ is the modular homomorphism of $G$.  The completion of this algebra in the maximal $C^\ast$-norm (cf. \cite[Section 10]{Blackadar}) yields the \emph{crossed product} $C^\ast$ \emph{algebra} $G \ltimes A=C^\ast(G,A)$ (we use both notations interchangeably).  

The crossed product has an important universal property: there is a 1-1 correspondence between $^\ast$-representations $\rho_{G\ltimes A}$ of $G\ltimes A$, and `covariant pairs' $(\rho_G,\rho_A)$ consisting of a unitary representation $\rho_G$ of $G$ and a $^\ast$-representation $\rho_A$ of $A$ on a common Hilbert space, which satisfy
\[ \rho_G(g)\rho_A(a)\rho_G(g)^{-1}=\rho_A(g.a).\]
Given $(\rho_G,\rho_A)$, the representation $\rho_{G\ltimes A}$ is sometimes called the `integrated form' of the covariant pair.  The special case $A=\bC$ gives rise to the group $C^\ast$ algebra $G\ltimes \bC=:C^\ast(G)$; in this case the universal property says that there is a 1-1 correspondence between unitary representations of $G$ and $^\ast$-representations of $C^\ast(G)$.

Let $B$ be a $G$-$C^\ast$ algebra, and let $(\E,(\cdot,\cdot)_B)$ be a right Hilbert $B$-module.  The algebra $C_c(G,B)$ acts on $C_c(G,\E)$ on the right according to the formula:
\[ (e\cdot b)(g)=\int_G e(g_1)\cdot g_1.b(g_1^{-1}g)\,\,dg_1.\]
$C_c(G,\E)$ carries a $C_c(G,B)$-valued inner product defined by
\[ (e_1,e_2)_{G\ltimes B}(g)=\int_G g_1^{-1}.(e_1(g_1),e_2(g_1g))_B\,\,dg_1. \]
The completion of $C_c(G,\E)$ in the corresponding norm is denoted $G \ltimes \E=C^\ast(G,\E)$ and carries a $G \ltimes B$-valued inner product.

The above construction leads to a homomorphism
\[ j_G \colon \KK_G(A,B) \rightarrow \KK(G\ltimes A,G\ltimes B) \]
known as the \emph{descent map}, which is functorial with respect to the Kasparov product.  At the level of cycles, it sends $(\E,\rho,F)$ to a cycle $(G\ltimes \E,\ti{\rho},\ti{F})$, where the operator $\ti{F}$ is obtained by simply applying $F$ point-wise, and the representation of $G\ltimes A$ is given by
\[ (\ti{\rho}(a) e)(g)=\int_G \rho(a(g_1))g_1.e(g_1^{-1}g)\,\,dg_1.\]

\subsection{The $K$-index.}
Let $K$ be a compact Lie group equipped with a Haar measure $dk$.  Let $H=H^+\oplus H^-$ be a $\bZ_2$-graded Hilbert space equipped with a continuous action of $K$ by unitary transformations (homogeneous of degree $0$).  The action of $K$ on $H$ induces a $^\ast$-representation $\rho$ of the group $C^\ast$-algebra $C^\ast(K)$:
\[(a\cdot v)(y)=\int_K a(k) k.v\,\,dk.\]

Any $K$-invariant closed subspace $W \subset H$ decomposes into an orthogonal direct sum of $K$-isotypic components
\[ W=\bigoplus_{\pi \in \Irr{K}} W_\pi.\]
Let $\st{D}$ be an odd, self-adjoint, $K$-equivariant operator on $H$ (possibly unbounded).  With respect to the decomposition $H=H^+\oplus H^-$, $\st{D}$ takes the form
\[ \st{D}=\left(\begin{array}{cc} 0 & \st{D}^-\\ \st{D}^+ & 0\end{array}\right), \qquad (\st{D}^+)^\ast=\st{D}^-.\]
\begin{definition}
\label{def:Kindex}
$\st{D}$ is $K$-\emph{Fredholm} if, for each irreducible representation $\pi$ of $K$, the restriction of $\st{D}$ to the $\pi$-isotypic component $H^{\pi}$ is Fredholm.  In this case we define the $K$-index
\[ \index_K(\st{D})=\sum_{\pi \in \Irr(K)} \Big(\dim(\ker(\st{D}^+)_\pi)-\dim(\ker(\st{D}^-)_\pi)\Big)\pi \in R^{-\infty}(K).\]
\end{definition} 

As a consequence of the Peter-Weyl theorem,
\begin{equation} 
\label{GroupAlgebra}
C^\ast(K) \simeq \bigoplus_{\pi \in \Irr(K)} \End(\pi)
\end{equation}
where $\oplus$ here means the completion of the algebraic direct sum in the sup-norm.  For example, when $K=T$ is a torus with integral lattice $\Lambda=\ker(\exp\colon \t \rightarrow T)$, all irreducible representations are 1-dimensional and correspond to weights $\lambda \in \Lambda^\ast=\Hom(\Lambda,\bZ)$, hence $C^\ast(T)\simeq C_0(\Lambda^\ast)$ (the Gelfand/Pontryagin dual).  Using \eqref{eqn:KHomologyAdditivity} and \eqref{GroupAlgebra} one has
\begin{equation} 
\label{eqn:IsoRepRing}
\K^0(C^\ast(K)) \simeq R^{-\infty}(K),
\end{equation}
the formal completion of the representation ring of $K$.   

Examples of $K$-Fredholm operators come from cycles for $\K^0(C^\ast(K))$. Indeed, if $(H,\rho,\st{D})$ is an unbounded cycle for $\K^0(C^\ast(K))$, then $\st{D}$ is $K$-Fredholm, and its $K$-index is the image of $[\st{D}]$ under the isomorphism \eqref{eqn:IsoRepRing}.

There are interesting operators having infinite dimensional kernel or cokernel, but with well-defined $K$-index.  Examples include transversally elliptic operators \cite{AtiyahTransEll} and the deformed Dirac operators on non-compact manifolds treated by Braverman \cite{Braverman2002}.

\section{A Fredholm property for $1^{st}$ order elliptic operators.}\label{sec:FredholmProperty}
In this section we prove a Fredholm property for $1^{st}$ order elliptic operators $\st{D}$ on non-compact manifolds satisfying a certain condition with respect to the action of a semi-direct product group $K\ltimes \Gamma$, with $K$ compact and $\Gamma$ discrete.

\subsection{$(\Gamma,K)$-admissible vector bundles.}\label{sec:GammaK}
Let $\Gamma$ be a countable discrete group, equipped with a proper length function, that is, a function $l \colon \Gamma \rightarrow \bR_{\ge 0}$ satisfying
\[ l(\gamma)=0 \hspace{0.2cm} \Leftrightarrow \hspace{0.2cm} \gamma=1, \qquad l(\gamma_1\gamma_2)\le l(\gamma_1)+l(\gamma_2), \qquad l(\gamma^{-1})=l(\gamma),\]
and such that the image of any infinite subset of $\Gamma$ is unbounded.  For a countable discrete group, length functions always exist (cf. \cite{DoranPark}, p.5).  Let $K$ be a compact Lie group that acts on $\Gamma$ by continuous group automorphisms $k \colon \gamma \mapsto \gamma^k$, and let $K \ltimes \Gamma$ denote the corresponding semi-direct product group.  Since the action of $K$ on $\Gamma$ is continuous, the action descends to an action of the component group $K/K^0$.  In particular, we see that the orbits are finite, with cardinality no greater than the number of components of $K$.  By averaging, we may assume that the length function is $K$-invariant, and thus for a $K$-orbit $\O \subset \Gamma$ we will write $l(\O)$ for the common length of all the elements in $\O$.

Let $(\Y,g)$ be a complete Riemannian manifold carrying an isometric action of $K \ltimes \Gamma$.  Assume the action of $\Gamma$ is free and proper.  Let 
\[ \pi \colon \Y \rightarrow Y:=\Y/\Gamma\] 
be the quotient map.  Since $\pi(k \cdot \gamma \cdot y)=\pi(\gamma^k \cdot k \cdot y)=\pi(k\cdot y)$, the $K$ action descends to $Y$. 

Let $S$ be a $\bZ_2$-graded vector bundle, equipped with actions of $K$ and of some $U(1)$ central extension $\wh{\Gamma}$ of $\Gamma$, covering the actions of $K$, $\Gamma$ on $\Y$.  Assume $S$ has a Hermitian structure invariant under $K$ and $\wh{\Gamma}$, and use this to define the Hilbert space $L^2(\Y,S)$ of square-integrable sections.
\begin{definition}
\label{def:GammaKAdmissible}
Let $S$ be a Hermitian vector bundle over $\Y$ carrying actions of $K$ and $\hGamma$ as described above.  We say $S$ is $(\Gamma,K)$-\emph{admissible} if for any $a \in C^\ast(K)$ and $s \in L^2(\Y,S)$,
\begin{equation}
\label{eqn:AdmissibleCond} 
\|a \cdot \hgamma \cdot s \| \xrightarrow{l(\gamma) \rightarrow \infty} 0.
\end{equation}
Here $\hgamma \in \hGamma$ denotes any lift of $\gamma \in \Gamma$.
\end{definition}
\begin{remark}
If $S$ is $(\Gamma,K)$-admissible, then elements of $K$, $\wh{\Gamma}$ must be far from commuting in some sense, since otherwise one would have 
\[ \|a\cdot \hgamma \cdot s\|=\|\hgamma \cdot a \cdot s\|=\|a\cdot s\|, \]
as the action of $\wh{\Gamma}$ on $L^2(\Y,S)$ is isometric.
\end{remark}

\begin{example}
\label{ex:Disconnected}
Let $K$ be a torus and $\Gamma$ any countable discrete group.  Let $\Lambda^\ast \simeq \Irr(K)$ be the weight lattice of $K$.  Let $\Y=\Gamma$ with trivial $K$ action and $\Gamma$ acting by left translation.  Let $S=\Y \times \bC$, with $\Gamma$ acting only on the first factor, i.e. $\gamma.(y,z)=(\gamma.y,z)$ for $(y,z) \in \Y \times \bC$.  Choose any map $\tn{wt} \colon \Y \rightarrow \Lambda^\ast$.  Let $K$ act on the fibre $S_y$ with weight $\tn{wt}(y) \in \Lambda^\ast$, thus $S$ becomes a $K$-equivariant line bundle over $\Y$, with fibres $\bC_{\tn{wt}(y)}$.  One has $C^\ast(K)\simeq C_0(\Lambda^\ast)$ by Pontryagin duality, and the action of $f \in C_0(\Lambda^\ast)$ on $L^2(\Y,S)$ is by multiplication by the function $f\circ \tn{wt}$.  It follows that $S$ is $(\Gamma,K)$-admissible if and only if the map $\wt$ has finite fibres.  (This is also equivalent to saying that the zero operator on $L^2(\Y,S)$ is $K$-Fredholm, consistent with Theorem \ref{thm:GammaKAdmissible} below.)
\end{example}

\begin{example}
\label{ex:Cylinder}
The simplest connected example is a line bundle over a cylinder $\Y=\bR \times \bR/\bZ$, equipped with actions of $K=S^1$ and $\Gamma=\bZ$.  Let $x \in \bR$, $y \in \bR/\bZ$ be coordinates on $\Y$.  The line bundle $L=\Y \times \bC$ has $S^1=\bR/\bZ$ and $\bZ$ actions given respectively by 
\[ \rho_{\scriptscriptstyle S^1}(\theta)(x,y;z)=(x,y+\theta;z), \qquad \rho_{\scriptscriptstyle \bZ}(n)(x,y;z)=(x+n,y;e^{2\pi \i ny}z).\]
We have the commutation relation
\[ \rho_{\scriptscriptstyle \bZ}(n)\rho_{\scriptscriptstyle S^1}(\theta)\rho_{\scriptscriptstyle \bZ}(n)^{-1}\rho_{\scriptscriptstyle S^1}(\theta)^{-1}=e^{2\pi \i n\theta}.\] 
Thus $L$ carries an action of a central extension of $S^1 \times \bZ$ covering the action of $S^1 \times \bZ$ on the cylinder, and in fact $L$ is $(\bZ,S^1)$-admissible (see Proposition \ref{prop:LambdaTAdmissible}).
\end{example}
The next result gives a sense of the $(\Gamma,K)$-admissible condition.
\begin{proposition}
Let $S \rightarrow \Y$ be a $(\Gamma,K)$-admissible line bundle.  Assume $Y=\Y/\Gamma$ is compact, and that $S$ admits a connection $\nabla$ that is both $K$ and $\hGamma$ invariant.  Let $\mu \colon \Y \rightarrow \k^\ast$ be the moment map defined by Kostant's formula
\[ 2\pi \i \pair{\mu}{\xi}=2\pi \i \mu_\xi=\L_\xi-\nabla_{\xi_{\sY}}.\]
Then $\mu$ is proper.
\end{proposition}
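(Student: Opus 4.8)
The plan is to use compactness of $Y=\Y/\Gamma$ to reduce properness of $\mu$ to a finiteness statement about $\Gamma$, and then to read that finiteness off from the admissibility hypothesis.  Since $\pi\colon\Y\to Y$ is a principal $\Gamma$-bundle over a compact base, fix a relatively compact open $U\subset\Y$ with $\pi(U)=Y$, so that $\Y=\bigcup_{\gamma\in\Gamma}\gamma\cdot U$.  Kostant's formula exhibits $\mu$ as a smooth map (the operator $\L_\xi-\nabla_{\xi_{\sY}}$ on the line bundle $S$ is of order zero, i.e.\ multiplication by the function $2\pi\i\mu_\xi$), so $\mu^{-1}(C)$ is closed for every compact $C\subset\k^\ast$.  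It therefore suffices to show that only finitely many translates $\gamma\cdot U$ meet $\mu^{-1}(C)$, for then $\mu^{-1}(C)$ is a closed subset of the compact set $\bigcup_i\gamma_i\cdot\overline{U}$.

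Next I would establish the transformation law of $\mu$ under $\Gamma$; assume $\Y$ connected (otherwise argue component by component).  I claim there is a group homomorphism $\beta\colon\Gamma\to\k^\ast$, $\gamma\mapsto\beta_\gamma$, with $\mu(\gamma\cdot y)=\mu(y)+\beta_\gamma$ for all $y$, and with each $\beta_\gamma$ a weight (so that $\chi_{\beta_\gamma}$ is a genuine character of $K^0$).  Writing $\omega$ for the curvature of $\nabla$, one has $d\mu_\xi=c\,\iota_{\xi_{\sY}}\omega$ for a constant $c\neq 0$; here $\omega$ is $K$- and $\hGamma$-invariant, and $\xi_{\sY}$ is $\Gamma$-invariant because $K^0$ centralizes $\Gamma$ inside $K\ltimes\Gamma$ (the $K$-action on $\Gamma$ factoring through $K/K^0$), so $\iota_{\xi_{\sY}}\omega$ descends to $Y$ and $\gamma^\ast d\mu_\xi=d\mu_\xi$; hence $\mu\circ\gamma-\mu$ is constant and additivity of $\beta$ follows.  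For integrality, conjugate Kostant's formula by a lift $\hgamma\in\hGamma$: using $\hgamma\nabla_{\xi_{\sY}}\hgamma^{-1}=\nabla_{\xi_{\sY}}$ (the connection is $\hGamma$-invariant and $\xi_{\sY}$ is $\Gamma$-invariant) together with the fact that conjugation by $\hgamma$ sends multiplication by $\mu_\xi$ to multiplication by $\mu_\xi\circ\gamma^{-1}=\mu_\xi-\pair{\beta_\gamma}{\xi}$, one obtains
\[ \hgamma\,\L_\xi\,\hgamma^{-1}=\L_\xi-2\pi\i\pair{\beta_\gamma}{\xi};\]
exponentiating shows that $\hgamma$ intertwines the $K^0$-action on $L^2(\Y,S)$ with its twist by $\chi_{\beta_\gamma}$, so $\chi_{\beta_\gamma}$ is a genuine character and $\hgamma$ maps the $\pi$-isotypic subspace of $L^2(\Y,S)$ onto the $(\pi\otimes\chi_{\beta_\gamma})$-isotypic subspace.

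Now the counting.  If $\mu^{-1}(C)$ meets $\gamma\cdot U$ then $\beta_\gamma\in C-\mu(\overline{U})$, a bounded subset of $\k^\ast$; since the $\beta_\gamma$ are weights and there are only finitely many weights of a given norm, it is enough to prove that $\beta^{-1}(\lambda_0)=\{\gamma:\beta_\gamma=\lambda_0\}$ is finite for each weight $\lambda_0$ (the set being empty unless $\chi_{\lambda_0}$ is a genuine character, which we may therefore assume).  Fix $\pi_0\in\Irr(K)$ with $L^2(\Y,S)_{\pi_0}\neq 0$ and a unit vector $s$ in it, and let $a\in C^\ast(K)$ be the identity of the block $\End(\pi_0\otimes\chi_{\lambda_0})$ under \eqref{GroupAlgebra}, so that $a\cdot(-)$ is the orthogonal projection onto $L^2(\Y,S)_{\pi_0\otimes\chi_{\lambda_0}}$.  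For $\gamma\in\beta^{-1}(\lambda_0)$ the vector $\hgamma\cdot s$ already lies in that subspace, whence $\|a\cdot\hgamma\cdot s\|=\|s\|=1$; by $(\Gamma,K)$-admissibility (Definition~\ref{def:GammaKAdmissible}) the left-hand side tends to $0$ as $l(\gamma)\to\infty$, so $l$ is bounded on $\beta^{-1}(\lambda_0)$, and since a proper length function has finite sublevel sets, $\beta^{-1}(\lambda_0)$ is finite.  This yields the required finiteness, and the theorem follows.

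The genuinely non-formal step is the middle one: proving the transformation law, and — above all — recognizing that the operator-theoretic admissibility condition says precisely that the weight shifts $\beta_\gamma$ eventually leave every bounded region of the weight lattice as $l(\gamma)\to\infty$.  Once $\hgamma$ is understood as shifting $K^0$-isotypic components by $\beta_\gamma$, the argument assembles quickly; the remaining ingredients (the $\Gamma$-covering of $\Y$, boundedness of $\mu$ on $\overline{U}$, and finiteness of the sublevel sets of $l$) are routine.
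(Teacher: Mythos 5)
Your argument is correct, but it takes a genuinely different route from the paper's. The paper argues asymptotically with arbitrary $L^2$ sections: conservation of $L^2$ mass under $\hgamma$, together with admissibility, forces the isotypic mass of $\hgamma\cdot s$ to drift to ever higher representations; the Casimir then gives $\sum_j\|\L_{\xi_j}(\hgamma\cdot s)\|^2\to\infty$, Kostant's formula and the $\hGamma$-invariance of $\nabla$ convert this into $\sum_j\|\mu_{\xi_j}(\hgamma\cdot s)\|\to\infty$, and a uniform gradient bound (from $\iota(\xi_\sY)\tn{curv}_\nabla=-d\mu_\xi$ plus cocompactness) upgrades a supremum over a translated fundamental domain to an infimum. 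You instead make the drift \emph{exact}: since $S$ is a line bundle with invariant connection, $\mu\circ\gamma=\mu+\beta_\gamma$ for a homomorphism $\beta$ into the character lattice, $\hgamma$ shifts isotypic components by precisely $\beta_\gamma$, and admissibility applied to a single block projection says exactly that $\beta$ has finite fibres; properness then follows by counting translates of a fundamental domain. Your version buys a sharper structural statement (the affine-periodicity of $\mu$) and avoids both the Casimir and the gradient-bound step, at the cost of using the rank-one hypothesis more heavily; both proofs use the same underlying mechanism, namely that $\hgamma$ twists the $K^0$-action by a character that must escape to infinity. Two small points of bookkeeping: you should, as the paper does in its first sentence, reduce at the outset to $K$ connected (admissibility for $K$ restricts to admissibility for $K^0$ via the open inclusion $C^\ast(K^0)\hookrightarrow C^\ast(K)$), since otherwise $\pi_0\otimes\chi_{\lambda_0}$ is not a $K$-representation and your block projection is not literally an element of the algebra appearing in Definition~\ref{def:GammaKAdmissible}; and you should note that $\beta_\gamma$, being the scalar difference of two Lie algebra representations, vanishes on $[\k,\k]$, so that ``weights of bounded norm are finite in number'' refers to a genuine lattice in $(\k/[\k,\k])^\ast$.
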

\begin{proof}
Since the moment map only depends on the infinitesimal action, we may as well assume $K$ is connected, hence the action of $K$ on $\Gamma$ is trivial, and the actions of $K$, $\Gamma$ on $\Y$ commute.  For $\pi \in \Irr(K)$ and $s \in L^2(\Y,S)=H$ let $s_\pi$ denote the component of $s$ in the $\pi$-isotypical component $H_\pi$.  Thus
\[ s=\sum_{\pi \in \Irr(K)} s_\pi, \qquad \|s\|^2=\sum_{\pi \in \Irr(K)}\|s_\pi\|^2.\]
As the action of $\hGamma$ is isometric,
\begin{equation} 
\label{eqn:ConsMass}
\|s\|^2=\|\hgamma \cdot s\|^2=\sum_{\pi \in \Irr(K)} \|(\hgamma \cdot s)_{\pi}\|^2.
\end{equation}
Using the description of $C^\ast(K)$ given in \eqref{GroupAlgebra}, let
\[ a=\bigoplus_{\pi \in \Irr(K)} a_\pi \id_\pi, \qquad a_\pi \in \bC, \quad a_\pi \xrightarrow{\pi \rightarrow \infty} 0.\]
Then $(a\cdot s)_\pi=a_\pi s_\pi$.  By the $(\Gamma,K)$-admissible condition
\[ \|a \cdot \hgamma \cdot s\|^2=\sum_{\pi \in \Irr(K)}a_\pi^2\|(\hgamma \cdot s)_\pi\|^2 \xrightarrow{l(\gamma)\rightarrow \infty} 0.\]
In particular, for any fixed $\pi$, the norm $\|(\hgamma \cdot s)_{\pi}\| \rightarrow 0$ as $l(\gamma) \rightarrow \infty$.  

Let $\xi_j$, $j=1,...,\dim(\k)$ be an orthonormal basis of $\k$.  By definition $\L_{\xi_j}s_\pi=\pi(\xi_j)s_\pi$.  Thus
\[ \sum_j \|\L_{\xi_j}s\|^2=\sum_{\pi \in \Irr(K)}\sum_j \big(\pi(\xi_j)^2s_\pi,s_\pi\big)=\sum_{\pi \in \Irr(K)}\tn{Cas}_\pi \|s_\pi\|^2,\]
where $\tn{Cas}_\pi$ is the value of the Casimir operator in the representation $\pi$.  Recall $\tn{Cas}_\pi \rightarrow \infty$ as $\pi \rightarrow \infty$.  Applying this to the section $\hgamma \cdot s$ and using $\|(\hgamma \cdot s)_\pi\| \rightarrow 0$, as well as equation \eqref{eqn:ConsMass} we deduce that
\begin{equation} 
\label{eqn:LieAct}
\sum_j \|\L_{\xi_j}(\hgamma\cdot s)\|^2 \xrightarrow{l(\gamma)\rightarrow \infty} \infty.
\end{equation}
By Kostant's formula
\[ 2\pi\|\mu_{\xi_j} (\hgamma \cdot s)\| \ge \|\L_{\xi_j}(\hgamma \cdot s)\|-\|\nabla_{\xi_j}(\hgamma \cdot s)\|.\]
Since by assumption $\nabla$ is $\hGamma$-invariant, the norm $\|\nabla_{\xi_j}(\hgamma \cdot s)\|$ is independent of $\hgamma$.  Thus
\[ \sum_j 2\pi\|\mu_{\xi_j} (\hgamma \cdot s)\| \ge \sum_j\|\L_{\xi_j}(\hgamma \cdot s)\| - C_s \]
where $C_s$ is a constant not depending on $\hgamma$.  By \eqref{eqn:LieAct},
\begin{equation} 
\label{eqn:AvgNormMu}
\sum_j \|\mu_{\xi_j}(\hgamma \cdot s)\| \xrightarrow{l(\gamma)\rightarrow \infty} \infty.
\end{equation}
If, for example, we choose $s$ with support contained in a fundamental domain $\Delta$ for the $\Gamma$ action, \eqref{eqn:AvgNormMu} implies that the supremum of $\sum_j |\mu_{\xi_j}|$ over $\gamma \cdot \Delta$ goes to infinity as $l(\gamma)$ goes to infinity.  Since $\nabla$ is $K$ and $\hGamma$ equivariant, $\mu$ satisfies $\iota(\xi_{\Y})\tn{curv}_\nabla=-d\mu_\xi$ where $\tn{curv}_\nabla \in \Omega^2(\Y)^{K\times \Gamma}$ is the curvature 2-form.  Since the action of $\Gamma$ is cocompact, this implies the gradient (for any $\Gamma$-invariant metric on $\Y$) of $\mu_\xi$ is uniformly bounded.  Combined with \eqref{eqn:AvgNormMu}, it follows that the infimum of $\sum_j |\mu_{\xi_j}|$ over $\gamma \cdot \Delta$ goes to infinity as $l(\gamma)$ goes to infinity.  By cocompactness of the $\Gamma$ action, $\sum_j |\mu_{\xi_j}|$ is a proper map. 
\end{proof}
\begin{proposition}
\label{prop:admissiblecompact}
Let $S \rightarrow \Y$ be $(\Gamma,K)$-admissible.  Let $T$ be a compact operator on $L^2(\Y,S)$.  Then for any $a \in C^\ast(K)$
\[ \lim_{l(\O)\rightarrow \infty} \sum_{\gamma \in \O} \|a\cdot \hgamma \cdot T \.\hgamma^{-1} \|=0,\]
where $\O \subset \Gamma$ denotes a $K$-orbit, and $\hgamma \in \hGamma$ is any lift of $\gamma \in \Gamma$.
\end{proposition}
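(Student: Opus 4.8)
The plan is to reduce the claim to the defining property \eqref{eqn:AdmissibleCond} of $(\Gamma,K)$-admissibility by a standard approximation argument, the main point being to control the \emph{sum} over a $K$-orbit $\O$ rather than just a single group element. First I would reduce to the case where $T$ is a finite-rank operator: since $T$ is compact, for any $\epsilon>0$ we may write $T = T_0 + R$ with $T_0$ finite rank and $\|R\| < \epsilon$. For the remainder $R$, since conjugation by $\hgamma$ is an isometry (the $\wh\Gamma$-action on $L^2(\Y,S)$ being unitary) and $\|a\|<\infty$, each term $\|a\cdot\hgamma\cdot R\cdot\hgamma^{-1}\|$ is bounded by $\|a\|\,\epsilon$; but there are up to $|K/K^0|$ terms in the orbit sum, so the $R$-contribution is at most $|K/K^0|\,\|a\|\,\epsilon$, which is small uniformly in $\O$. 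Hence it suffices to prove the statement for $T = \theta_{s_1,s_2}$ a rank-one operator $s\mapsto s_1(s_2,s)$, and then by linearity and the triangle inequality for arbitrary finite rank $T$.

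For a rank-one $T=\theta_{s_1,s_2}$, a direct computation gives
\[
a\cdot\hgamma\cdot T\cdot\hgamma^{-1}\;=\;a\cdot\hgamma\cdot\theta_{s_1,s_2}\cdot\hgamma^{-1}\;=\;\theta_{\,a\cdot\hgamma\cdot s_1,\;\hgamma\cdot s_2\,},
\]
using that $\hgamma$ is unitary (so $(\hgamma\cdot s_2, \hgamma\cdot(-)) = (s_2,-)$) and that $a$ acts on the left, commuting past the rank-one structure on the appropriate side. The operator norm of a rank-one operator $\theta_{u,v}$ is $\|u\|\cdot\|v\|$, so
\[
\|a\cdot\hgamma\cdot T\cdot\hgamma^{-1}\| \;=\; \|a\cdot\hgamma\cdot s_1\|\cdot\|\hgamma\cdot s_2\| \;=\; \|a\cdot\hgamma\cdot s_1\|\cdot\|s_2\|,
\]
again because $\hgamma$ is isometric. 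Now $(\Gamma,K)$-admissibility says precisely that $\|a\cdot\hgamma\cdot s_1\|\to 0$ as $l(\gamma)\to\infty$. Since all elements of a $K$-orbit $\O$ share the common length $l(\O)$, every term in the sum $\sum_{\gamma\in\O}\|a\cdot\hgamma\cdot T\cdot\hgamma^{-1}\|$ tends to $0$ as $l(\O)\to\infty$; and the number of terms is bounded by the fixed integer $|K/K^0|$. Therefore the finite sum tends to $0$, which is the claim for rank-one (hence for finite-rank) $T$.

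The one technical point to be careful about is that the lift $\hgamma\in\wh\Gamma$ of $\gamma$ is only defined up to a phase $\lambda\in U(1)$, but since such a phase contributes a unimodular scalar that cancels in $\hgamma\cdot T\cdot\hgamma^{-1}$ (and also leaves $\|a\cdot\hgamma\cdot s_1\|$ unchanged, as it is an overall scalar on the section), the expressions above are independent of the choice of lift, consistent with the statement. I expect the main obstacle to be purely bookkeeping: combining the $\epsilon/3$-type estimates (finite-rank approximation, number of orbit elements, and the admissibility limit) in the right order so that the bound is genuinely uniform in $\O$ as $l(\O)\to\infty$, and making sure the constant $|K/K^0|$ bounding $|\O|$ is used correctly. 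There is no deep analytic difficulty once admissibility is invoked; the content is that admissibility, a single-element condition, automatically upgrades to an orbit-sum condition because orbits are uniformly finite.
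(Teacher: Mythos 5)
Your proof is correct and takes essentially the same approach as the paper: reduce to rank-one operators (where the claim is a restatement of admissibility, which you make explicit with the identity $a\cdot\hgamma\cdot\theta_{s_1,s_2}\cdot\hgamma^{-1}=\theta_{a\cdot\hgamma\cdot s_1,\,\hgamma\cdot s_2}$), pass to finite-rank by linearity and to general compact $T$ by a norm-approximation estimate, and then use the uniform bound $|\O|\le |K/K^0|$ on orbit cardinalities to upgrade the pointwise limit to the orbit sum. The paper organizes the $\epsilon$-bookkeeping slightly differently (first proving the single-summand limit for all compact $T$, then handling the orbit sum), but the content is the same.
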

\begin{proof}
We first show the analogous result for a single summand, i.e. $\|a\.\hgamma \. T\. \hgamma^{-1}\| \rightarrow 0$ as $l(\gamma) \rightarrow \infty$.  For $T$ of rank $1$, this is essentially a re-phrasing of the definition.   By taking sums one obtains the result for all finite rank $T$.  For a general compact $T$, fix $\epsilon >0$ and choose a finite rank $T^\prime$ such that $\|T-T^\prime\| \le (2\|a\|)^{-1}\epsilon$.  Choose $n$ sufficiently large that for $l(\gamma)>n$, $\|a\.\wh{\gamma}\.T^\prime \. \wh{\gamma}^{-1}\| \le \epsilon/2$.  Then for $l(\gamma)>n$,
\[ \|a\.\wh{\gamma}\.T\. \wh{\gamma}^{-1}\| \le \|a\|\.\|T-T^\prime\|+\|a\.\wh{\gamma}\.T^\prime\.\wh{\gamma}^{-1}\| \le \epsilon.\]

To handle the sum over $\O$, recall the cardinality of $\O$ is uniformly bounded by the number of components $c(K)$ of $K$.  Let $\epsilon >0$.  Using what we have already proved, let $n$ be such that $\|a\.\hgamma \.T \.\hgamma^{-1}\|<\tfrac{\epsilon}{c(K)}$ for all $\gamma$ such that $l(\gamma)>n$.  Then
\[ l(\O)>n \quad \Rightarrow \quad \sum_{\gamma \in \O}\|a\.\hgamma\. T \.\hgamma^{-1}\|<c(K)\cdot \tfrac{\epsilon}{c(K)}=\epsilon.\]
\end{proof}
The main result of this section is the following.
\begin{theorem}
\label{thm:GammaKAdmissible}
Let $\Y$ be a $K\ltimes \Gamma$-manifold, equipped with a complete $K\ltimes \Gamma$-invariant Riemannian metric.  Assume $\Gamma$ acts freely and properly on $\Y$, and let $Y=\Y/\Gamma$.  Let $S \rightarrow \Y$ be $(\Gamma,K)$-admissible.  Let $\st{D}$ be an odd, essentially self-adjoint $1^{st}$-order elliptic operator with finite propagation speed, equivariant for the actions of $K$ and $\wh{\Gamma}$, acting on sections of $S$.  Then $\st{D}$ defines a class $[\st{D}] \in \K^0(K \ltimes C_0(Y))$.
\end{theorem}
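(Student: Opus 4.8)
The plan is to realize $[\st{D}]$ as the bounded transform of an \emph{unbounded} Kasparov cycle for $\KK\big(K\ltimes C_0(Y),\bC\big)$ in the sense of Definition~\ref{def:Unbounded}.

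\emph{The cycle, and reduction of the two conditions.} Take $\H=L^2(\Y,S)$, with $U$ the unitary $K$-representation coming from the $K$-action and, for $f\in C_0(Y)$, $m(f)$ the operator of multiplication by the $\Gamma$-invariant function $\pi^\ast f$. Then $U_k\,m(f)\,U_k^{-1}=m(k\cdot f)$, so $(U,m)$ is a covariant pair and integrates to a representation $\rho$ of $K\ltimes C_0(Y)$ on $\H$. Since $\st{D}$ is essentially self-adjoint, $b(\st{D})$ is defined, and I claim $(\H,\rho,\st{D})$ is an unbounded cycle; then by the discussion after Definition~\ref{def:Unbounded} the triple $(\H,\rho,b(\st{D}))$ is a cycle representing the desired class $[\st{D}]\in\K^0\big(K\ltimes C_0(Y)\big)$. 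Condition~(2) is routine: for $a$ in the dense subalgebra $\rho\big(C^\infty_c(K,C^\infty_c(Y))\big)$ one checks that $\rho(a)$ preserves $\dom(\st{D})$ and that $[\st{D},\rho(a)]$ is bounded, using that $[\st{D},m(f)]$ is multiplication by the bounded function $\sigma_{\st{D}}(d(\pi^\ast f))$ (bounded because $c_{\st{D}}<\infty$) and that $[\st{D},U_k]=0$. The content is condition~(1); since $\{a:\rho(a)(\st{D}^2+1)^{-1}\in\bK(\H)\}$ is a norm-closed subspace, it suffices to treat $a$ with $\rho(a)=U(\phi)\,m(f)$, where $\phi\in C_c(K)$, $f\in C_c(Y)$ and $U(\phi)=\int_K\phi(k)U_k\,dk$ (these span a dense subspace). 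Thus everything reduces to showing that $U(\phi)\,m(f)\,(\st{D}^2+1)^{-1}$ is compact, which is where $(\Gamma,K)$-admissibility and Proposition~\ref{prop:admissiblecompact} enter.

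\emph{Decomposition along $\Gamma$ and reduction to a key lemma.} Fix $\psi\in C(\Y)$ with $0\le\psi\le1$, $K$-invariant, whose support maps properly onto $Y$, and with $\sum_{\gamma\in\Gamma}\gamma\cdot\psi\equiv1$ (such $\psi$ exists as $\Gamma$ acts freely and properly with smooth quotient $Y$). For $f\in C_c(Y)$ put $\varphi=\psi\cdot\pi^\ast f\in C_c(\Y)$; then $m(f)=\sum_{\gamma\in\Gamma}\hgamma\,M_\varphi\,\hgamma^{-1}$ strongly, where $M_\varphi$ denotes multiplication by $\varphi$ (the central $U(1)\subset\hGamma$ cancels upon conjugation by $\hgamma$). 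Because the Fourier transform of $(1+x^2)^{-1}$ decays exponentially, one can write $(\st{D}^2+1)^{-1}=g(\st{D})+e(\st{D})$ with $\wh g$ of compact support and $\|e\|_\infty$ arbitrarily small; then $\|U(\phi)m(f)e(\st{D})\|\le\|\phi\|\,\|f\|_\infty\,\|e\|_\infty$, while $U(\phi)m(f)g(\st{D})=\sum_{\gamma\in\Gamma}U(\phi)\,\hgamma\big(M_\varphi g(\st{D})\big)\hgamma^{-1}$ (using that $\st{D}$ is $\hGamma$-equivariant, so $\hgamma^{-1}$ commutes with $g(\st D)$). So it suffices to prove the \emph{Key Lemma}: for $\phi\in C_c(K)$, $\varphi\in C_c(\Y)$ and $g\in C_0(\bR)$ with $\wh g$ of compact support, the series $\sum_{\gamma\in\Gamma}U(\phi)\,\hgamma\big(M_\varphi g(\st{D})\big)\hgamma^{-1}$ converges in operator norm to a compact operator. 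Granting it, $U(\phi)m(f)g(\st{D})$ is compact, hence $U(\phi)m(f)(\st{D}^2+1)^{-1}$ is within $\|\phi\|\,\|f\|_\infty\,\|e\|_\infty$ of a compact operator for every admissible $e$, and is therefore compact.

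\emph{Proof of the Key Lemma and the main obstacle.} Each $M_\varphi g(\st{D})$ is compact (Proposition~\ref{RellichLemma}). Group the sum over $K$-orbits $\O\subset\Gamma$ and set $Z_\O=U(\phi)M_{\varphi_\O}g(\st{D})$ with $\varphi_\O=\sum_{\gamma\in\O}\gamma\cdot\varphi$ (a finite sum); each $Z_\O$ is compact, and by Proposition~\ref{prop:admissiblecompact},
\[ \|Z_\O\|\le\sum_{\gamma\in\O}\bigl\|\,U(\phi)\,\hgamma\bigl(M_\varphi g(\st{D})\bigr)\hgamma^{-1}\,\bigr\|\ \xrightarrow{\ l(\O)\to\infty\ }\ 0. \]
The main obstacle --- and the precise place where the compact and discrete actions interact --- is that Proposition~\ref{prop:admissiblecompact} only gives ``$\|Z_\O\|\to0$'', not summability, so more is needed to get \emph{norm} convergence of $\sum_\O Z_\O$. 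Here one uses two geometric facts: $g(\st{D})$ and $g(\st{D})g(\st{D})^\ast=|g|^2(\st{D})$ have finite propagation (Proposition~\ref{PropagationDistance}), and $U(\phi),U(\phi)^\ast$ displace the support of a section only by a $\Gamma$-bounded amount (within a fixed compact enlargement of the finite $K$-orbit of its support). Hence both the input and output supports of $Z_\O$ stay inside a fixed enlargement of the compact $\supp\varphi_\O$, and consequently $Z_\O^\ast Z_{\O'}=Z_\O Z_{\O'}^\ast=0$ whenever the compacts attached to $\O$ and $\O'$ are more than a fixed distance apart; and --- since $\Gamma$ acts properly on $\Y$ and $\Y$ is complete, so closed bounded neighbourhoods of compacts are compact --- for each $\O$ only a bounded number $N$ of orbits $\O'$ fail this. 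Colour the countable set of $K$-orbits with $N$ colours so that same-coloured orbits are mutually ``bi-orthogonal'' in this sense; since a finite sum of pairwise bi-orthogonal operators has norm equal to the supremum of the individual norms, $\bigl\|\sum_{\O\in\F}Z_\O\bigr\|\le N\sup_{\O\in\F}\|Z_\O\|$ for every finite family $\F$ of orbits. As $\{\O:l(\O)\le n\}$ is finite for each $n$ (the length function is proper), the net of partial sums is norm-Cauchy; its limit is a norm-limit of compact operators, hence compact. This establishes the Key Lemma, condition~(1), and the theorem.
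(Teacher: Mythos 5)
Your proof is correct, and its overall strategy is the one the paper follows: realize $(L^2(\Y,S),\rho,\st{D})$ as an unbounded cycle, reduce condition (1) of Definition \ref{def:Unbounded} to compactness of operators of the form $a\cdot f\cdot(\st{D}^2+1)^{-1}$, decompose over $\Gamma$, group the resulting series by $K$-orbits, and combine Proposition \ref{RellichLemma}, Proposition \ref{prop:admissiblecompact} and a mutual-orthogonality norm estimate to get norm convergence. The implementation differs in two genuine ways. First, the paper chooses a locally finite cover of $Y$ by sets $V_j$ small enough that the lifted cutoffs $\nu_j^{\gamma}$, $\sigma_j^{\gamma}$ have pairwise disjoint supports for distinct $\gamma$ (condition (b) of Section \ref{sec:PartUnity}); the orbit sums $T_{\O}$ are then mutually orthogonal across \emph{all} orbits for free, and Lemma \ref{lem:orthogonal} gives the sup bound directly. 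You instead work with a single $\Gamma$-partition function $\psi$ whose translates overlap, and recover the estimate by a bounded-overlap-plus-colouring argument, using that $Z_{\O}^\ast Z_{\O'}$ and $Z_{\O}Z_{\O'}^\ast$ vanish for all but a uniformly bounded number of $\O'$ (via finite propagation of $|g|^2(\st{D})$ as in Proposition \ref{PropagationDistance}, properness of the $\Gamma$-action, and Hopf--Rinow). Second, the paper keeps $(1+\st{D}^2)^{-1}$ intact and produces two-sided cutoffs $\ti{\sigma}_j\,\chi(\st{D})\,\ti{\nu}_j$ via a commutator expansion, whereas you approximate $(1+x^2)^{-1}$ in sup norm by $g$ with $\wh{g}$ compactly supported and use only a one-sided cutoff. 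Your route trades the cover-index bookkeeping for the colouring step; both are sound. Two minor points of hygiene: Proposition \ref{prop:admissiblecompact} is stated for $a\in C^\ast(K)$ acting through the integrated representation, so your $U(\phi)$ should be read as $\rho(\phi)$ for $\phi\in C_c(K)\subset C^\ast(K)$ (as you intend); and the averaging needed to make $\psi$ both $K$-invariant and a $\Gamma$-partition of unity uses the compatibility $k\cdot(\gamma\cdot\psi)=\gamma^k\cdot(k\cdot\psi)$, which is worth recording since $K$ acts nontrivially on $\Gamma$.
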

In particular, if $Y=\Y/\Gamma$ is compact then $\st{D}$ is $K$-Fredholm with $K$-index
\[ \index_K(\st{D})=p_\ast [\st{D}] \in \K^0(C^\ast(K))=R^{-\infty}(K),\]
where $p\colon Y \rightarrow \pt$.  More generally, the intersection product in $\KK$-theory gives a map
\begin{equation} 
\label{CapProduct}
\otimes \colon \KK(C^\ast(K),K \ltimes C_0(Y)) \times \KK(K\ltimes C_0(Y),\bC) \rightarrow \KK(C^\ast(K),\bC).
\end{equation}
Using the descent homomorphism, we obtain a pairing
\begin{equation}
\label{eqn:CapProduct2} 
\cap \colon \K^0_K(Y) \times \K^0(K\ltimes C_0(Y)) \rightarrow \K^0(C^\ast(K)), \qquad (\st{x},[\st{D}]) \mapsto \st{x} \cap [\st{D}]=j_K(\st{x})\otimes [\st{D}].
\end{equation}
Thus as input $\cap$ takes an element in the $K$-equivariant $\K$-theory of $Y$ and an element of $\K^0(K\ltimes C_0(Y))$ (where our K-homology class of interest lives) and outputs an `index' in $R^{-\infty}(K)$.  
\begin{definition}
\label{def:CapProd}
We will refer to the map defined in equation \eqref{eqn:CapProduct2} as the \emph{cap product} or \emph{index pairing}, as this instance of the $\KK$-product is a generalization of the usual pairing between K-theory and K-homology.
\end{definition}

\begin{example}
To illustrate Theorem \ref{thm:GammaKAdmissible} we return to Example \ref{ex:Cylinder}.  Identify $\Y=\bR^2/\bZ$ with $\bC/\bZ$, and let $w=x+\i y$.  For the $\bZ_2$-graded vector bundle $S$ we take $\wedge \bC \otimes L=L \oplus (L\otimes d\ol{w})$.  The operator $\st{D}$ will be a Dolbeault-Dirac operator on $\wedge \bC$ twisted by $L$.  We must choose a connection on $L$ invariant under the $S^1 \ltimes \wh{\bZ}$ action.  We may take the connection 1-form on the corresponding principal $\bC^{\times}$-bundle to be
\[ \frac{dz}{z} - 2\pi \i x dy \]
since this is invariant under $(x,y;z)\mapsto (x+n,y;e^{2\pi \i ny}z)$ for $n \in \bZ$.  The corresponding connection 1-form on the base is $A=-2\pi \i xdy$.  The Dirac operator is
\[ \st{D}=\sqrt{2}\big(\ol{\partial}+\ol{\partial}^\ast\big) + \sqrt{2} \pi x\big(\varepsilon_{d\ol{w}}+\iota_{dw}\big), \qquad \ol{\partial}=\varepsilon_{d\ol{w}} \partial_{\ol{w}}, \quad \ol{\partial}^\ast=-\iota_{dw}\partial_w\]
where $\varepsilon_{d\ol{w}}$ denotes exterior multiplication and $\iota_{dw}$ is contraction ($\iota_{dw}d\ol{w}=2$).  The kernel of the operator $\st{D}^+$ mapping sections of $L \otimes \wedge^0\bC$ to sections of $L \otimes \wedge^1\bC$ consists of $L^2$ solutions of
\begin{equation} 
\label{eqn:ODE}
\partial_{\ol{w}} f=-\pi x f
\end{equation}
which are periodic in $y \in \bR/\bZ$.  A basis for the space of solutions is the family
\[ f_n(w,\ol{w})=e^{2\pi nw} e^{-\pi x^2}=e^{2\pi \i ny}e^{2\pi nx-\pi x^2}, \quad n \in \bZ. \]
Under the $S^1$ action $f_n$ transforms with weight $n$.  Hence
\begin{equation} 
\label{eqn:KIndexExample}
\ker(\st{D}^+) \simeq \bigoplus_{n \in \bZ} \bC_n \simeq L^2(S^1). 
\end{equation}
The $L^2$ kernel of $\st{D}^-$ is trivial (similar to \eqref{eqn:ODE} but with the sign reversed).  Thus the $K=S^1$-index in this case is \eqref{eqn:KIndexExample}, which is what one expects for the quantization of the cotangent bundle of $S^1$.
\end{example}

In the case that the actions of $K$ and $\wh{\Gamma}$ on $S$ fit together into an action of a $U(1)$ central extension $G=K\ltimes \wh{\Gamma}$ of $K\ltimes \Gamma$, there is a small refinement of Theorem \ref{thm:GammaKAdmissible} that we will use later on.  

The subgroup $\wh{K}=K\times U(1)$ is a union of connected components of $G$.  As the inclusion map 
\[ \iota_{\wh{K}} \colon \wh{K} \hookrightarrow G \] 
is a group homomorphism with open range, it induces an injective $^\ast$-homomorphism $C^\ast(\wh{K}) \rightarrow C^\ast(G)$ also denoted $\iota_{\wh{K}}$ (cf. \cite[Section 7]{ChrisPhillipsLecNotes}).  By Fourier decomposition over $S^1$, the $C^\ast$ algebras $C^\ast(G)$, $C^\ast(\wh{K})$ break up into infinite direct sums (over the Pontryagin dual $\bZ$ of $S^1$) of their $S^1$-homogeneous ideals:
\begin{equation} 
\label{eqn:HomogeneousIdeals}
C^\ast(G) = \bigoplus_{n \in \bZ} C^\ast(G)^{(n)}, \qquad C^\ast(\wh{K})=\bigoplus_{n \in \bZ} C^\ast(\wh{K})^{(n)}
\end{equation}
where $C^\ast(G)^{(n)}$ denotes the closed ideal obtained by taking limits of continuous, compactly supported functions $f \colon G \rightarrow \bC$ satisfying $f(z^{-1}g)=z^nf(g)$ for all $g \in G$, $z \in U(1) \subset G$ (likewise for $C^\ast(\wh{K})^{(n)}$).  The $n=1$ summand\footnote{Or indeed any of the summands.} in \eqref{eqn:HomogeneousIdeals} for $C^\ast(\wh{K})$ may be identified with $C^\ast(K)$; put in other words, there is a 1-1 correspondence between unitary representations of $K$ and unitary representations of $K \times U(1)$ such that $U(1)$ acts with weight $1$.  Thus we may define a $^\ast$-homomorphism
\[ \iota_K \colon C^\ast(K) \rightarrow C^\ast(G) \]
as the composition of the identification $C^\ast(K)\simeq C^\ast(\wh{K})^{(1)}$ with the map $\iota_{\wh{K}}$.  The map in the opposite direction on representations of $G$ with central weight $1$ simply restricts the representation to $K$ (viewed as a subgroup of $G$). More generally, if $A$ is a $G$-$C^\ast$ algebra, then one has an injective $\ast$-homomorphism
\[ \iota_K \colon K \ltimes A \rightarrow G \ltimes A.\]

The same argument we use to prove Theorem \ref{thm:GammaKAdmissible} will prove the following.
\begin{corollary}
\label{cor:Refine}
Let $\Y$ be a $K\ltimes \Gamma$-manifold, equipped with a complete $K\ltimes \Gamma$-invariant Riemannian metric.  Let $S \rightarrow \Y$ be a $G=K\ltimes \wh{\Gamma}$-equivariant vector bundle which is $(\Gamma,K)$-admissible.  Let $\st{D}$ be a $G$-equivariant odd essentially self-adjoint $1^{st}$-order elliptic operator with finite propagation speed acting on sections of $S$.  Then $\st{D}$ defines a class $[\st{D}] \in \K^0(G \ltimes C_0(Y))$.
\end{corollary}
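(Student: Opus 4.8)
The plan is to mimic the proof of Theorem \ref{thm:GammaKAdmissible} verbatim, tracking the central $U(1)$ at each stage. First I would form the unbounded cycle $(L^2(\Y,S),\rho,\st{D})$ for $\K^0(C_0(\Y))$ as in Proposition \ref{prop:KHomologyClass}; the hypotheses (essential self-adjointness, finite propagation speed, $1^{st}$-order ellipticity) are exactly what that Proposition requires. The new point is that $L^2(\Y,S)$ now carries a unitary action of the whole central extension $G=K\ltimes\wh\Gamma$, compatible with the $C_0(\Y)$-module structure (note $G$ acts on $Y=\Y/\Gamma$ through $G\to K$, so $\rho$ is $G$-equivariant in the appropriate sense), and $\st{D}$ is $G$-equivariant. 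Hence $(L^2(\Y,S),\rho,\st{D})$ is a $G$-equivariant unbounded cycle, and applying the descent map $j_G$ of Section \ref{sec:CrossProdDescent} produces a class in $\KK(G\ltimes C_0(\Y),G\ltimes C_0(\Y))$; composing the underlying bounded cycle with the $^\ast$-homomorphism $C_0(Y)\hookrightarrow C_0(\Y)$ (or rather, working directly with the $G$-equivariant representation of $C_0(Y)$ on $L^2(\Y,S)$) gives a candidate cycle $(G\ltimes_{C_0(\Y)}\!L^2(\Y,S),\ti\rho,\ti F)$ for $\K^0(G\ltimes C_0(Y))$, where $\ti F=b(\st{D})$ is applied pointwise as in the descent construction.

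The content is then exactly the verification that this triple satisfies the condition \eqref{ConditionOnF}, i.e. that for $\ti\rho(a)$ with $a\in C_c(G,C_0(Y))$ the operators $\ti\rho(a)(\ti F-\ti F^\ast)$, $\ti\rho(a)(1-\ti F^2)$, $[\ti\rho(a),\ti F]$ are compact on the Hilbert $G\ltimes C_0(Y)$-module. This is where the $(\Gamma,K)$-admissibility enters, precisely as in Theorem \ref{thm:GammaKAdmissible}: one reduces to showing that for $a\in C_c(G)$ and a compact operator $T$ on $L^2(\Y,S)$ (obtained from Proposition \ref{RellichLemma}, using $\chi=1-b^2$ or $\chi=[$commutator terms$]$ together with a partition of unity compactly supported on $Y$ upstairs), the sum $\sum_{\gamma\in\O}\|a\.\hgamma\.T\.\hgamma^{-1}\|$ tends to $0$ as $l(\O)\to\infty$ — which is Proposition \ref{prop:admissiblecompact}. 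The only adjustment is bookkeeping with the central circle: by the Fourier decomposition \eqref{eqn:HomogeneousIdeals} every element of $C^\ast(G)$ is a norm-convergent sum of its $S^1$-homogeneous components, and on each homogeneous piece the estimate from Proposition \ref{prop:admissiblecompact} applies after identifying $C^\ast(G)^{(n)}$-pieces with the corresponding $K$-data; since the central $U(1)$ acts isometrically on $L^2(\Y,S)$, passing a central element through $T$ costs nothing, so the admissibility bound survives summation over $n$ as well.

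I do not expect a genuinely new obstacle here — the corollary is, by design, the same theorem with $K$ replaced by $\wh K=K\times U(1)$ and the classifying algebra enlarged from $K\ltimes C_0(Y)$ to $G\ltimes C_0(Y)$. The one place requiring a little care is checking that the compactness estimates are uniform across the infinitely many $S^1$-weights: one wants that for fixed $a\in C_c(G)$ and fixed compact $T$, the bound on $\|a\.\hgamma\.T\.\hgamma^{-1}\|$ as $l(\gamma)\to\infty$ does not degrade with the central weight. This follows because $a$ has compact support and hence only finitely many of its homogeneous components $a^{(n)}$ are nonzero once one works modulo $\epsilon$ (or, more cleanly, because the norm estimate in Proposition \ref{prop:admissiblecompact} is phrased in terms of $\|a\|$ alone and the reduction to finite-rank $T$ is $n$-independent). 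Granting this, the compactness of the three operators in \eqref{ConditionOnF} follows as in Theorem \ref{thm:GammaKAdmissible}, the triple is a bona fide cycle, and it defines the asserted class $[\st{D}]\in\K^0(G\ltimes C_0(Y))$.
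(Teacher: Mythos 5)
Your proposal is correct and is essentially the paper's own argument: the paper proves Corollary \ref{cor:Refine} by simply observing that the proof of Theorem \ref{thm:GammaKAdmissible} goes through verbatim with the integrated form of the $G$-covariant pair on $L^2(\Y,S)$, exactly as you do (your parenthetical detour through the descent map is off-target but you immediately replace it with the correct direct construction). Your worry about uniformity over the $S^1$-weights is in fact a non-issue, since the central circle acts on $S$ with a fixed weight, so the representation of $C^\ast(G)$ factors through a single homogeneous ideal in \eqref{eqn:HomogeneousIdeals}, identified with $C^\ast(K)$-data, and any $a\in C_c(G)$ is supported on finitely many components $\wh{K}\hgamma$; the reduction to Proposition \ref{prop:admissiblecompact} is then immediate.
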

\begin{remark}
\label{rem:Refine}
The restriction $\iota_K^\ast[\st{D}] \in \K^0(K\ltimes C_0(Y))$ recovers the class defined in \ref{def:CapProd}.  Thus the class in Corollary \ref{cor:Refine} is a refinement which remembers the additional symmetry of $\st{D}$.
\end{remark}

The remainder of this section is devoted to the proof of Theorem \ref{thm:GammaKAdmissible}.

\subsection{Partition of unity on $\Y$.}\label{sec:PartUnity}
Let $\{V_j|j \in \J\}$ be a locally finite covering of $Y=\Y/\Gamma$ by $K$-invariant relatively compact open subsets.  The $V_j$ can be chosen sufficiently small that
\begin{enumerate}
\item There is a continuous section $r_j\colon V_j \rightarrow \Y$ of the quotient map $\pi$ over $V_j$.  Define 
\[ U_j=r_j(V_j) \simeq V_j.\]
\item For all $\gamma \ne \gamma^\prime$ one has
\[ \overline{\gamma U_j} \cap \overline{\gamma^\prime U_j}=\emptyset.\]
\end{enumerate}

Let $\{\nu_j \}$, $\{\sigma_j \}$ be collections of $K$-invariant smooth bump functions satisfying
\[ \supp(\nu_j) \subset \supp(\sigma_j) \subset V_j, \qquad \sigma_j|_{\supp(\nu_j)} \equiv 1, \qquad \sum_j \nu_j^2=1. \] 
Lift $\nu_j$, $\sigma_j$ to $U_j$ using the section $r_j$; we also denote the lifts by $\nu_j$, $\sigma_j$ respectively.  For each $\gamma \in \Gamma$ define
\[ \nu_j^\gamma(y)=\nu_j(\gamma^{-1}y), \quad \sigma_j^\gamma(y)=\sigma_j(\gamma^{-1}y), \qquad \forall y \in \Y.\]
Thus $\supp(\nu_j^\gamma) \subset \supp(\sigma_j^\gamma) \subset \gamma \cdot U_j$.  Let
\[ \ti{\nu}_j=\sum_{\gamma \in \Gamma} \nu_j^{\gamma}=\pi^\ast \nu_j, \qquad \ti{\sigma}_j=\sum_{\gamma \in \Gamma} \sigma_j^{\gamma}=\pi^\ast \sigma_j.\]
By construction $\{\ti{\nu}_j^2|j \in \J \}$ is a partition of unity on $\Y$.

\subsection{Mutually orthogonal operators.}
Let $\{F_k|k \in \bN\}$ be a collection of bounded linear operators on a Hilbert space $H$.  It is convenient to make the following definition.
\begin{definition}
The operators $\{F_k\}$ are \emph{mutually orthogonal} if $F_jF_k^\ast=F_k^\ast F_j=0$ for all $j \ne k$.
\end{definition}
\begin{lemma}
\label{lem:orthogonal}
Let $\{F_k\}$ be mutually orthogonal and such that $\sup_k \{\|F_k\|\}=c<\infty$.  Then $\sum_k F_k$ converges in the strong operator topology to an operator of norm $c$.
\end{lemma}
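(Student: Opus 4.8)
The plan is to prove Lemma \ref{lem:orthogonal} by first establishing strong convergence of the partial sums $S_N = \sum_{k=1}^N F_k$, then identifying the norm of the limit. The key observation is the following: mutual orthogonality means that the ranges of the $F_k$ (more precisely, the closures of the ranges of $F_k$) are pairwise orthogonal, and likewise the ranges of the $F_k^\ast$ are pairwise orthogonal. Indeed, from $F_j F_k^\ast = 0$ for $j \neq k$ we get that $\overline{\ran(F_k^\ast)} \perp \overline{\ran(F_j^\ast)}$ is \emph{not} quite what this says directly; rather $F_j F_k^\ast = 0$ says $\ran(F_k^\ast) \subseteq \ker(F_j)$. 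Let me instead argue as follows.

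First I would show that for a fixed vector $v \in H$ the series $\sum_k F_k v$ converges. Write $w_k = F_k v$. I claim the $w_k$ are mutually orthogonal vectors: for $j \neq k$, $(w_j, w_k) = (F_j v, F_k v) = (v, F_j^\ast F_k v) = 0$ since $F_j^\ast F_k = 0$. Hence $\sum_k \|w_k\|^2$ is a sum of squared norms of an orthogonal family, and I need $\sum_k \|w_k\|^2 < \infty$. For this, consider any finite subset $A \subseteq \bN$ and the partial sum $\sum_{k \in A} w_k$; by Pythagoras $\|\sum_{k \in A} w_k\|^2 = \sum_{k \in A} \|w_k\|^2$. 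On the other hand I want to bound $\|\sum_{k \in A} F_k v\|$ by $c \|v\|$. To see this, observe that $(\sum_{k \in A} F_k)(\sum_{k \in A} F_k)^\ast = \sum_{j,k \in A} F_j F_k^\ast = \sum_{k \in A} F_k F_k^\ast$ (all cross terms vanish by mutual orthogonality), so $\|\sum_{k \in A} F_k\|^2 = \|\sum_{k \in A} F_k F_k^\ast\|$; and since the $F_k F_k^\ast$ are positive operators with mutually orthogonal ranges (as $\ran(F_k F_k^\ast) \subseteq \overline{\ran(F_k)}$ and these are pairwise orthogonal — which follows because for $j \neq k$, $F_j F_k^\ast = 0$ implies $\overline{\ran(F_k^\ast)} \subseteq \ker(F_j) = \overline{\ran(F_j^\ast)}^\perp$, wait, I should instead note $\ran(F_k)^{cl} \perp \ran(F_j)^{cl}$ follows from taking adjoints of $F_j^\ast F_k = 0$, namely $F_k^\ast F_j = 0$ giving $\ran(F_j) \subseteq \ker(F_k^\ast) = \overline{\ran(F_k)}^\perp$), the norm of their sum is the supremum of the individual norms, $\le c^2$. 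Therefore $\|\sum_{k \in A} F_k\| \le c$ for every finite $A$, hence $\sum_{k \in A}\|w_k\|^2 = \|\sum_{k\in A} w_k\|^2 \le c^2\|v\|^2$, and letting $A$ exhaust $\bN$ gives $\sum_k \|w_k\|^2 \le c^2 \|v\|^2 < \infty$. Consequently $\sum_k F_k v$ converges (the partial sums over initial segments form a Cauchy sequence), defining an operator $F v := \sum_k F_k v$ with $\|Fv\|^2 = \sum_k \|w_k\|^2 \le c^2\|v\|^2$, so $F$ is bounded with $\|F\| \le c$ and $S_N \to F$ strongly.

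Finally I would check $\|F\| = c$ exactly. Since $\sup_k \|F_k\| = c$, for any $\varepsilon > 0$ pick $k_0$ with $\|F_{k_0}\| > c - \varepsilon$ and a unit vector $v$ with $\|F_{k_0} v\| > c - \varepsilon$. By mutual orthogonality the vector $Fv = \sum_k F_k v$ has $F_{k_0} v$ as an orthogonal component, so $\|Fv\|^2 = \sum_k \|F_k v\|^2 \ge \|F_{k_0} v\|^2 > (c-\varepsilon)^2$, hence $\|F\| > c - \varepsilon$; letting $\varepsilon \to 0$ gives $\|F\| \ge c$, and combined with $\|F\| \le c$ we conclude $\|F\| = c$.

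The routine calculations are the Pythagoras bookkeeping and the adjoint manipulations; the one point that deserves care — and which I regard as the main (minor) obstacle — is the claim that a norm-bounded family of positive operators with pairwise orthogonal ranges has the property that the norm of any finite sub-sum equals the max of the norms. This is where mutual orthogonality is used crucially and is worth spelling out: if $P_k \ge 0$ have pairwise orthogonal ranges then $H$ decomposes as an orthogonal sum of the $\overline{\ran P_k}$ (plus their common kernel complement), each $P_k$ acts as zero off $\overline{\ran P_k}$, so $\sum_{k\in A} P_k$ is block-diagonal with blocks $P_k$ and hence $\|\sum_{k\in A}P_k\| = \max_{k\in A}\|P_k\|$. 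Everything else is a direct consequence of the Pythagorean theorem in Hilbert space.
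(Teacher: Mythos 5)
Your proof is correct. It runs on the same underlying mechanism as the paper's — the two orthogonality consequences of the definition plus Pythagoras, giving square-summability of the $F_k v$ and hence a Cauchy sequence of partial sums — but you obtain the key uniform bound $\sum_k \|F_k v\|^2 \le c^2\|v\|^2$ differently. The paper decomposes the \emph{input} vector as $v=\sum_k v_k + r$ with $v_k$ the projection onto $\ker(F_k)^\perp=\overline{\ran(F_k^\ast)}$ (these subspaces are mutually orthogonal by $F_jF_k^\ast=0$), notes $F_kv=F_kv_k$, and bounds $\sum_k\|F_kv_k\|^2\le c^2\sum_k\|v_k\|^2\le c^2\|v\|^2$; you instead bound the operator norm of each finite partial sum via the $C^\ast$-identity, $\|\sum_{k\in A}F_k\|^2=\|\sum_{k\in A}F_kF_k^\ast\|=\max_{k\in A}\|F_kF_k^\ast\|\le c^2$, using that the positive operators $F_kF_k^\ast$ are simultaneously block-diagonal. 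Both are complete; the paper's version is slightly more economical (one decomposition of $v$ does all the work, and putting $v=v_k$ immediately gives $\|F\|\ge c$), while yours isolates the reusable fact that any finite sub-sum of a mutually orthogonal family has norm $\max_k\|F_k\|$, which is a clean statement in its own right. Your final step recovering $\|F\|=c$ is also fine.
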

\begin{proof}
Let $F^{(n)}$ be the $n^{th}$ partial sum.  Let $v \in H$ and $v_k$ the orthogonal projection of $v$ onto $\ker(F_k)^\perp=\overline{\ran(F_k^\ast)}$.  For $j \ne k$, $F_jF_k^\ast=0$ implies $v_k \in \ker(F_j)$, and as $v_j \in \ran(F_j) \subset \ker(F_j)^\perp$, we get $\pair{v_j}{v_k}=0$.  Similarly $F_j^\ast F_k=0$ for $j \ne k$ implies $\pair{F_jv_j}{F_kv_k}=0$.  Thus for $m\le n$,
\[ \|(F^{(n)}-F^{(m)})v\|^2=\bigg\|\sum_{k=m}^n F_kv_k\bigg\|^2=\sum_{k=m}^n \|F_kv_k\|^2\le c^2\sum_{k=m}^n\|v_k\|^2,\]
This inequality implies the sum converges in the strong operator topology to an operator $F$ with norm at most $c$.  Putting $v=v_k$ and using $F_jv_k=0$ for $j \ne k$, gives $\|F\| \ge c$.
\end{proof}
\begin{remark}
\label{rem:mutuallyorthogonal}
Let $F \in \bB(L^2(\Y,S))$ be any bounded operator.  Then for any fixed $j \in \J$ and $\gamma^\prime \in \Gamma$ the operators
\[ \sigma_j^{\gamma \gamma^\prime} F \nu_j^{\gamma}, \quad \gamma \in \Gamma\]
are mutually orthogonal, because $\nu_j^{\gamma} \nu_j^{\gamma^\prime}=0=\sigma_j^{\gamma}\sigma_j^{\gamma^\prime}$ for $\gamma \ne \gamma^\prime$.
\end{remark}

\subsection{Proof of Theorem \ref{thm:GammaKAdmissible}.}
We first prove two lemmas.
\begin{lemma}
\label{lem:finitesum}
Fix $j \in \J$ and let $\chi \in C_0(\bR)$ have Fourier transform supported in $(-r,r)$ for some $r>0$.  There exists a finite $K$-invariant subset $\Gamma^\prime \subset \Gamma$ such that
\[ \ti{\sigma}_j \chi(\st{D}) \ti{\nu}_j=\sum_{\gamma \in \Gamma}\sum_{\gamma^\prime \in \Gamma^\prime} \sigma_j^{\gamma \gamma^\prime} \chi(\st{D})\nu_j^{\gamma}.\]
\end{lemma}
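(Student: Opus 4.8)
The key point is that $\chi(\st{D})$ has finite propagation, so Proposition \ref{PropagationDistance} forces $\sigma_j^{\gamma\gamma^\prime}\chi(\st{D})\nu_j^\gamma$ to vanish unless the supports $\gamma\gamma^\prime\cdot U_j$ and $\gamma\cdot U_j$ are within Riemannian distance $r\cdot c_{\st{D}}$ of each other. First I would expand $\ti\sigma_j = \sum_{\delta\in\Gamma}\sigma_j^\delta$ and $\ti\nu_j = \sum_{\gamma\in\Gamma}\nu_j^\gamma$, so that
\[ \ti\sigma_j\chi(\st{D})\ti\nu_j = \sum_{\gamma\in\Gamma}\sum_{\delta\in\Gamma}\sigma_j^\delta\chi(\st{D})\nu_j^\gamma, \]
and then reindex the inner sum by writing $\delta=\gamma\gamma^\prime$, i.e. $\gamma^\prime=\gamma^{-1}\delta$, so the expression becomes $\sum_{\gamma}\sum_{\gamma^\prime\in\Gamma}\sigma_j^{\gamma\gamma^\prime}\chi(\st{D})\nu_j^\gamma$. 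It remains to show that only finitely many $\gamma^\prime$ contribute, and that the offending set of $\gamma^\prime$'s can be chosen independent of $\gamma$ and $K$-invariant.

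**Main steps.** Since $\ti\sigma_j = \pi^\ast\sigma_j$ descends to the compact-support-mod-$\Gamma$ function on $Y$, the set $U_j$ is relatively compact; by completeness of $\Y$ the closed $r c_{\st{D}}$-neighbourhood $N_r(\overline{U_j})$ of $\overline{U_j}$ has compact closure. Because the $\Gamma$-action is proper, only finitely many $\gamma^\prime\in\Gamma$ satisfy $\gamma^\prime\cdot\overline{U_j}\cap N_r(\overline{U_j})\neq\emptyset$; call this finite set $\Gamma^\prime_0$. For $\gamma^\prime\notin\Gamma^\prime_0$ the Riemannian distance between $\supp(\nu_j)\subset U_j$ and $\supp(\sigma_j^{\gamma^\prime})\subset\gamma^\prime\cdot U_j$ exceeds $r c_{\st{D}}$, hence by Proposition \ref{PropagationDistance} $\sigma_j^{\gamma^\prime}\chi(\st{D})\nu_j=0$; translating by $\gamma$ (which is an isometry commuting with $\st{D}$ up to the $\wh\Gamma$-equivariance, so it preserves $\chi(\st{D})$ and only shifts supports) gives $\sigma_j^{\gamma\gamma^\prime}\chi(\st{D})\nu_j^\gamma=0$ for all $\gamma$. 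Finally I would enlarge $\Gamma^\prime_0$ to its $K$-saturation $\Gamma^\prime=K\cdot\Gamma^\prime_0$; since $K$-orbits in $\Gamma$ are finite (bounded by the number of components of $K$), $\Gamma^\prime$ is still finite, and it is $K$-invariant by construction. This yields exactly the claimed identity.

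**Anticipated obstacle.** The one place requiring care is the interchange of the (now doubly infinite, but locally finite) sums and the passage through $\chi(\st{D})$: one should check that for fixed $\gamma$ the inner sum $\sum_{\gamma^\prime}\sigma_j^{\gamma\gamma^\prime}\chi(\st{D})\nu_j^\gamma$ is actually finite (it is, by the propagation argument above, with the same $\Gamma^\prime$), and that the outer sum over $\gamma$ converges in the appropriate (strong operator) sense — this is where Remark \ref{rem:mutuallyorthogonal} and Lemma \ref{lem:orthogonal} enter, since for each fixed $\gamma^\prime$ the operators $\{\sigma_j^{\gamma\gamma^\prime}\chi(\st{D})\nu_j^\gamma\}_{\gamma\in\Gamma}$ are mutually orthogonal and uniformly bounded by $\|\chi\|_\infty$. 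A minor technical point is that the section $r_j$ need only be continuous, not smooth, but the bump functions $\nu_j,\sigma_j$ on $Y$ are smooth and their pullbacks $\ti\nu_j,\ti\sigma_j=\pi^\ast\nu_j,\pi^\ast\sigma_j$ are genuinely smooth on $\Y$ since $\pi$ is a covering map, so this causes no difficulty; the role of condition (2) on the cover (disjointness of the $\overline{\gamma U_j}$) is simply to make the $\sigma_j^\gamma$ have pairwise disjoint supports, which is what drives the mutual orthogonality.
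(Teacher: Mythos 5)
Your proposal is correct and follows essentially the same route as the paper: expand $\ti\sigma_j, \ti\nu_j$ over $\Gamma$, reindex $\delta=\gamma\gamma'$, use compactness of $\supp\nu_j,\supp\sigma_j$ plus properness of the $\Gamma$-action to produce a finite $\Gamma'$ beyond which Proposition \ref{PropagationDistance} kills $\sigma_j^{\gamma'}\chi(\st{D})\nu_j$, translate by $\gamma$ (the paper uses isometric invariance of the Riemannian distance rather than unitary conjugation, but these are equivalent), and enlarge $\Gamma'$ to a $K$-invariant set using finiteness of $K$-orbits in $\Gamma$. Your remark on convergence of the outer sum via mutual orthogonality is a correct elaboration of a point the paper leaves implicit.
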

\begin{proof}
Since $\supp(\nu_j)$, $\supp(\sigma_j)$ are compact and the action of $\Gamma$ on $\Y$ is proper, there exists a finite subset $\Gamma^\prime \subset \Gamma$ such that for all $\zeta \in \Gamma \setminus \Gamma^\prime$, the distance between $\supp(\sigma_j^\zeta)$ and $\supp(\nu_j)$ is greater than $rc_{\st{D}}$.  Enlarge $\Gamma^\prime$ if necessary to make it $K$-invariant (the orbits of $K$ in $\Gamma$ are finite). Translating by $\gamma \in \Gamma$ and applying Proposition \ref{PropagationDistance} it follows that
\[ \sigma_j^{\gamma \zeta} \chi(\st{D})\nu_j^\gamma=0 \]
for all $\zeta \in \Gamma \setminus \Gamma^\prime$.  The result follows since
\[ \ti{\sigma}_j \chi(\st{D})\ti{\nu}_j=\sum_{\gamma, \gamma^\prime \in \Gamma} \sigma_j^{\gamma \gamma^\prime}\chi(\st{D})\nu_j^\gamma=\sum_{\gamma \in \Gamma}\sum_{\gamma^\prime \in \Gamma^\prime}\sigma_j^{\gamma \gamma^\prime}\chi(\st{D})\nu_j^\gamma.\]
\end{proof}

\begin{lemma}
\label{prop:compactness}
Let $\st{D}$ be an operator as in Theorem \ref{thm:GammaKAdmissible}.  Then for any $a \in C^\ast(K)$, $j \in \J$, and $\chi \in C_0(\bR)$ the operator
\begin{equation} 
\label{eqn:CpctOp}
a\. \ti{\sigma}_j \.\chi(\st{D})\.\ti{\nu}_j 
\end{equation}
is compact.
\end{lemma}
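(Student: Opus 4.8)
The plan is to prove Lemma~\ref{prop:compactness} by combining the finite-propagation-speed decomposition from Lemma~\ref{lem:finitesum}, the mutual orthogonality of the translates (Remark~\ref{rem:mutuallyorthogonal}), and the decay estimate for conjugated compact operators (Proposition~\ref{prop:admissiblecompact}). A first reduction: it suffices to treat $\chi \in C_c(\bR)$ whose (distributional) Fourier transform is supported in some $(-r,r)$, since such $\chi$ are dense in $C_0(\bR)$ in the sup-norm (e.g. convolve a general $\chi \in C_0(\bR)$ with an approximate identity with compactly supported Fourier transform), and the map $\chi \mapsto a \ti\sigma_j \chi(\st{D}) \ti\nu_j$ is norm-continuous from $C_0(\bR) \to \bB(L^2(\Y,S))$ with the compacts a closed ideal. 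Also, by Proposition~\ref{RellichLemma} applied on the cocompact quotient (or directly: $\sigma_j \chi(\st{D}) \nu_j$ has compactly supported ``position'' factors and $\chi(\st{D})$ is locally compact since $\st{D}$ is essentially self-adjoint $1^{st}$-order elliptic), the building-block operator $T_0 := \sigma_j \chi(\st{D}) \nu_j$ is compact.

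Next I would use Lemma~\ref{lem:finitesum} to write
\[
a\. \ti\sigma_j \.\chi(\st{D})\.\ti\nu_j = \sum_{\gamma' \in \Gamma'} \sum_{\gamma \in \Gamma} a\. \sigma_j^{\gamma\gamma'} \chi(\st{D}) \nu_j^{\gamma},
\]
a finite sum over $\gamma' \in \Gamma'$, so it is enough to show that for each fixed $\gamma' \in \Gamma'$ the inner sum $\sum_{\gamma \in \Gamma} a\. \sigma_j^{\gamma\gamma'} \chi(\st{D}) \nu_j^{\gamma}$ is compact. Now $\chi(\st{D})$ commutes with the $\wh\Gamma$-action (since $\st{D}$ is $\wh\Gamma$-equivariant), and $\sigma_j^{\gamma\gamma'} = \hgamma \cdot \sigma_j^{\gamma'} \cdot \hgamma^{-1}$, $\nu_j^{\gamma} = \hgamma \cdot \nu_j \cdot \hgamma^{-1}$ as multiplication operators, where $\hgamma$ is any lift of $\gamma$ to $\wh\Gamma$ (the central $U(1)$ acts trivially on the functions, so this is well-defined). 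Hence the $\gamma$-th term equals $a\. \hgamma \. T_{\gamma'} \. \hgamma^{-1}$ with $T_{\gamma'} := \sigma_j^{\gamma'} \chi(\st{D}) \nu_j$, which is compact as above. Organizing the sum over $\gamma$ by $K$-orbits $\O \subset \Gamma$ (each of size at most $c(K)$), each orbit-block $\sum_{\gamma \in \O} a\. \hgamma\. T_{\gamma'} \. \hgamma^{-1}$ is a finite sum of compacts, hence compact, with norm bounded by $\sum_{\gamma \in \O} \|a\. \hgamma\. T_{\gamma'}\. \hgamma^{-1}\|$, and by Proposition~\ref{prop:admissiblecompact} this bound tends to $0$ as $l(\O) \to \infty$.

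It remains to pass from orbit-blocks to the full sum, i.e. to see that the series $\sum_{\O} \big(\sum_{\gamma \in \O} a\. \hgamma\. T_{\gamma'}\. \hgamma^{-1}\big)$ converges in operator norm; together with the fact that each partial sum is compact and the compacts are norm-closed, this gives compactness of the limit. Here is where mutual orthogonality enters: by Remark~\ref{rem:mutuallyorthogonal}, for fixed $j$ and $\gamma'$ the operators $\{\sigma_j^{\gamma\gamma'} \chi(\st{D}) \nu_j^{\gamma} : \gamma \in \Gamma\}$ are mutually orthogonal; grouping by $K$-orbit, the orbit-blocks $F_{\O} := a\. \sum_{\gamma \in \O}\sigma_j^{\gamma\gamma'}\chi(\st{D})\nu_j^{\gamma}$ are \emph{not} literally mutually orthogonal because of the factor $a$, but $a$ commutes with multiplication by $\Gamma$-translates of the $K$-invariant functions $\nu_j,\sigma_j$ (as $a \in C^\ast(K)$ acts via the $K$-action, which commutes with such multiplication operators up to the $K$-action on $\Y$ — here one must be slightly careful and instead argue that $\|F_{\O}\| \le \|a\|\cdot\|\sum_{\gamma\in\O}\sigma_j^{\gamma\gamma'}\chi(\st{D})\nu_j^\gamma\|$ and apply Lemma~\ref{lem:orthogonal} to the mutually orthogonal family $\{\sigma_j^{\gamma\gamma'}\chi(\st{D})\nu_j^\gamma\}_{\gamma}$ to get $\|\sum_{\gamma \in \O}\cdots\| \le \sup_{\gamma\in\O}\|\sigma_j^{\gamma\gamma'}\chi(\st{D})\nu_j^\gamma\|$). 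The cleanest route: use Lemma~\ref{lem:orthogonal} to control partial sums of the whole family over $\gamma$, and Proposition~\ref{prop:admissiblecompact} to make the tail small; concretely, for $\epsilon>0$ pick $n$ so that $\sum_{\gamma \in \O}\|a\.\hgamma\.T_{\gamma'}\.\hgamma^{-1}\| < \epsilon$ whenever $l(\O)>n$, and observe that the ``far'' part $\sum_{l(\O)>n} F_{\O}$ has norm at most $\sup_{l(\O)>n}\sum_{\gamma\in\O}\|a\.\hgamma\.T_{\gamma'}\.\hgamma^{-1}\| \le \epsilon$ by mutual orthogonality (Lemma~\ref{lem:orthogonal}), while the ``near'' part is a finite sum of compacts. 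Letting $\epsilon \to 0$ exhibits \eqref{eqn:CpctOp} as a norm-limit of compacts, hence compact, and summing over the finitely many $\gamma' \in \Gamma'$ finishes the proof. The main obstacle is the bookkeeping in this last paragraph: reconciling the presence of the factor $a$ (which is not compactly supported in position and does not literally preserve mutual orthogonality) with the norm-convergence argument, and making sure the interchange of the $K$-orbit grouping with the mutual-orthogonality estimate is legitimate — the decay input from Proposition~\ref{prop:admissiblecompact} is exactly tailored to absorb $a$, so the key is to structure the estimate so that $a$ always sits to the left of the orthogonal family.
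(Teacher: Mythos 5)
Your reductions (density of $\chi$ with compactly supported Fourier transform, compactness of the building blocks via Proposition \ref{RellichLemma}, the decomposition from Lemma \ref{lem:finitesum}, and the decay input from Proposition \ref{prop:admissiblecompact}) all match the paper's proof. But there is a genuine gap at the one step you yourself flag: to bound the tail $\sum_{l(\O)>n}F_{\O}$ by $\sup_{l(\O)>n}\|F_{\O}\|$ you invoke Lemma \ref{lem:orthogonal}, which requires the operators $F_{\O}$ to be mutually orthogonal --- and in your decomposition (fixing $\gamma'\in\Gamma'$ first) they are not. The obstruction is precisely the factor $a$: one needs $(aG_{\O'})^\ast(aG_{\O})=G_{\O'}^\ast a^\ast a\, G_{\O}=0$, and since $a^\ast a\in C^\ast(K)$ acts through the $K$-action, which moves $\sigma_j^{\gamma\gamma'}$ to $\sigma_j^{\gamma^k\gamma'^k}$, the operator $a^\ast a\,G_{\O}$ can acquire components supported over translates $\delta\zeta U_j$ with $\delta\notin\O$ (solve $\delta\zeta=\gamma^k\gamma'^k$ with $\zeta=\gamma'$), destroying orthogonality against $G_{\O'}^\ast$. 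Factoring $a$ out of the tail does not help either, since $\|\sum_{l(\O)>n}G_{\O}\|=\|T_{\gamma'}\|$ does not decay; the decay lives entirely in the interaction of $a$ with the translates $\hgamma(\cdot)\hgamma^{-1}$, so you cannot separate $a$ from the orthogonality argument. Without mutual orthogonality the triangle inequality over the infinitely many orbits diverges, so the norm convergence of the series is not established.

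The paper's fix is a different grouping of the same terms: do \emph{not} fix $\gamma'$. Set $T=\sum_{\gamma'\in\Gamma'}\sigma_j^{\gamma'}\chi(\st{D})\nu_j$, summing over the whole ($K$-invariant) set $\Gamma'$. Because $k\cdot\sigma_j^{\gamma'}=\sigma_j^{(\gamma')^k}$ and $\Gamma'$ is $K$-invariant, $T$ is $K$-invariant, and hence so is each orbit block $T_{\O}=\sum_{\gamma\in\O}\hgamma\, T\,\hgamma^{-1}$. A $K$-invariant operator commutes with every element of $C^\ast(K)$, so $(aT_{\O'})^\ast(aT_{\O})=T_{\O'}^\ast T_{\O}\,a^\ast a=0$ for $\O\ne\O'$: the blocks $a\,T_{\O}$ \emph{are} mutually orthogonal, Lemma \ref{lem:orthogonal} applies, and Proposition \ref{prop:admissiblecompact} kills the supremum over the tail. (Alternatively, one could salvage your grouping by proving a bounded-overlap variant of Lemma \ref{lem:orthogonal}, since each $F_{\O}$ fails to be orthogonal to at most $c(K)$ of the others; but that lemma is not in the paper and you would have to supply it.) The remaining steps of your argument are correct as written.
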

\begin{proof}
By a density argument, it suffices to prove the lemma for $\chi$ with compactly supported Fourier transform.  By Lemma \ref{lem:finitesum}, there is a finite, $K$-invariant subset $\Gamma^\prime \subset \Gamma$ such that \eqref{eqn:CpctOp} may be written
\begin{equation} 
\label{compactnesseqn1}
\sum_{\O \in \Gamma/K} a\. T_{\O}
\end{equation}
where the sum over $\O \in \Gamma/K$ denotes a sum over $K$-orbits in $\Gamma$, and
\[ T_{\O}=\sum_{\gamma \in \O} \hgamma\.T\.\hgamma^{-1}, \qquad T=\sum_{\gamma^\prime \in \Gamma^\prime} \sigma_j^{\gamma^\prime}\chi(\st{D})\nu_j.\]
By Proposition \ref{RellichLemma}, $\sigma_j^{\gamma^\prime}\chi(\st{D})\nu_j$ is compact.  Since $\Gamma^\prime$, $\O$ are finite, $T_{\O}$ is compact, being a finite sum of compact operators.  Therefore it suffices to show that \eqref{compactnesseqn1} converges in norm.

The operators $T_{\O}$ are mutually orthogonal by Remark \ref{rem:mutuallyorthogonal}.  Since $\Gamma^\prime$ is $K$-invariant, $T$ is $K$-invariant.  Since $T_{\O}$ is a sum over a $K$-orbit of translates of $T$, $T_{\O}$ is also $K$-invariant.  Hence the operators $a\.T_{\O}$ are also mutually orthogonal.  Thus
\[ \Bigg\|\sum_{l(\O)>n}a \. T_\O\Bigg\|=\sup_{l(\O)>n}\|a\.T_\O\|\le \sup_{l(\O)>n}\sum_{\gamma \in \O}\|a\.\wh{\gamma}\.T\.\wh{\gamma}^{-1}\|,\]
and the last expression goes to $0$ as $n \rightarrow \infty$ by the $(\Gamma,K)$-admissible condition, Proposition \ref{prop:admissiblecompact}. This proves that the sum in \eqref{compactnesseqn1} converges in norm.
\end{proof}
 
\begin{proof}[\textbf{Proof of Theorem} \ref{thm:GammaKAdmissible}.]
Using Lemma \ref{prop:compactness}, the proof reduces to a computation with commutators.  By Definition \ref{def:Unbounded}, we must check that for a dense set of $f \in K \ltimes C_0(Y)$, $[\st{D},f]$ is bounded and $(1+\st{D}^2)^{-1}\cdot f$ is compact.  We may assume $f$ is of the form $a\otimes \psi$, with $a \in C^\ast(K)$ and $\psi \in C_c^\infty(Y)$.  Since $\st{D}$ is $K$-equivariant, $a$ commutes with $\st{D}$.  The condition on the commutator is immediate, since
\[ [\st{D},\psi]=-\i \sigma_{\st{D}}(\pi^\ast d\psi),\]
$d\psi$ is smooth and compactly supported, hence $\sigma_{\st{D}}(\pi^\ast d\psi)$ is a smooth, bounded endomorphism of $S$.  The element $a \in C^\ast(K)$ also commutes with $\ti{\nu}_j$, $\ti{\sigma}_j$ as these functions are $K$-invariant (even $K\ltimes \Gamma$-invariant).  We have
\begin{align}
\label{eqn:expansion}
\nonumber (1+\st{D}^2)^{-1}&=\sum_j (1+\st{D}^2)^{-1}\ti{\nu}_j^2\\
\nonumber &=\sum_j \ti{\nu}_j (1+\st{D}^2)^{-1}\ti{\nu}_j-[\ti{\nu}_j,(1+\st{D}^2)^{-1}]\ti{\nu}_j\\
\nonumber &=\sum_j \ti{\nu}_j(1+\st{D}^2)^{-1}\ti{\nu}_j+(1+\st{D}^2)^{-1}[\ti{\nu}_j,\st{D}^2](1+\st{D}^2)^{-1}\ti{\nu}_j\\
\nonumber &=\sum_j \ti{\nu}_j(1+\st{D}^2)^{-1}\ti{\nu}_j+(1+\st{D}^2)^{-1}\i\Big(\sigma_{\st{D}}(d\ti{\nu}_j)\st{D}+\st{D}\sigma_{\st{D}}(d\ti{\nu}_j)\Big)(1+\st{D}^2)^{-1}\ti{\nu}_j.
\end{align}
Since $\sigma_j\equiv 1$ on $\supp(\nu_j)$, one has $\sigma_j\nu_j=\nu_j$ and $\sigma_jd\nu_j=d\nu_j$, hence $\sigma_{\st{D}}(d\ti{\nu}_j)=\sigma_{\st{D}}(d\ti{\nu}_j)\ti{\sigma}_j$.  Using this and Lemma \ref{prop:compactness}, one sees that the operator $a\ti{\nu}_j(1+\st{D}^2)^{-1}\ti{\nu}_j$ as well as the operators
\[ a(1+\st{D}^2)^{-1}\sigma_{\st{D}}(d\ti{\nu}_j)\st{D}(1+\st{D}^2)^{-1}\ti{\nu}_j=(1+\st{D}^2)^{-1}\sigma_{\st{D}}(d\ti{\nu}_j)\Big(a\ti{\sigma}_j \st{D}(1+\st{D}^2)^{-1}\ti{\nu}_j\Big),\]
and
\[ a(1+\st{D}^2)^{-1}\st{D}\sigma_{\st{D}}(d\ti{\nu}_j)(1+\st{D}^2)^{-1}\ti{\nu}_j=(1+\st{D}^2)^{-1}\st{D}\sigma_{\st{D}}(d\ti{\nu}_j)\Big(a\ti{\sigma}_j (1+\st{D}^2)^{-1}\ti{\nu}_j\Big) \]
are compact.  Since $\psi$ has compact support in $Y$, $\nu_j \psi=0$ for all but finitely many $j \in \J$.  Thus $a(1+\st{D}^2)^{-1}\psi$ becomes a finite sum of compact operators, hence is compact.
\end{proof}

\section{Application to Hamiltonian loop group spaces}\label{sec:GlobTrans}
This section reviews some results from \cite{LMSspinor}.  Then, using the main result of Section \ref{sec:FredholmProperty}, we construct K-homology classes associated to a Hamiltonian loop group space.  We show that certain index pairings have an additional anti-symmetry under the action of the affine Weyl group, and make a connection with representations of loop groups.  Finally, we determine cycles representing these K-homology classes.

\subsection{Hamiltonian loop group spaces.}\label{sec:LGSpace}
Let $H$ be a compact and connected Lie group with Lie algebra $\h$.  Fix a choice of maximal torus $T$ with Lie algebra $\t$, and let $W=N_H(T)/T$ be the Weyl group.  Let $\Lambda=\ker(\exp \colon \t \rightarrow T)$ be the integral lattice, and $\Lambda^\ast=\Hom(\Lambda,\bZ)$ the (real) weight lattice.  Let $\R \subset \Lambda^\ast$ denote the roots, and fix a positive Weyl chamber $\t_+$ and corresponding set of positive roots $\R_+$.  We have a triangular decomposition
\[ \h_{\bC}=\n_- \oplus \t_{\bC} \oplus \n_+,\]
where $\n_+$ (resp. $\n_-$) is the sum of the positive (resp. negative) root spaces.  The half-sum of the positive roots is
\[ \rho=\frac{1}{2}\sum_{\alpha \in \R_+} \alpha.\]
The affine Weyl group is $W_{\aff}=W\ltimes \Lambda$.  For $w \in W_{\aff}$, let $l(w)$ denote the length of $w$, i.e. the number of simple reflections in a reduced expression for $w$.  Fix an invariant inner product on $\h$, which we use to identify $\h$ with $\h^\ast$, $\t$ with $\t^\ast$ and $\h/\t$ with $\t^\perp$.

Let $LH$ denote the loops $S^1 \rightarrow H$ of some fixed Sobolev level $s > \tfrac{1}{2}$; point-wise multiplication of loops makes $LH$ into a Banach Lie group.  The Lie algebra of $LH$ is the space $L\h=\Omega^0(S^1,\h)$ consisting of loops in $\h$ of Sobolev level $s$.  We define the smooth dual $L\h^\ast$ to consist of $\h$-valued 1-forms on $S^1$ of Sobolev level $s-1$; the pairing between $L\h$, $L\h^\ast$ is given by the inner product, followed by integration over the circle.  $L\h^\ast$ is regarded as the space of connections on the trivial principal $H$-bundle over $S^1$, and carries a smooth, proper $LH$ action by gauge transformations:
\begin{equation}
\label{eqn:GaugeAction} 
h\cdot \xi=\Ad_h \xi - dh h^{-1}, \qquad h \in LH, \quad \xi \in L\h^\ast.
\end{equation}

The group $H$ (hence also any subgroup of $H$, such as $N_H(T)$) embeds in $LH$ as the subgroup of constant loops.  Another important closed subgroup of $LH$ is the \emph{based loop group}
\[ \Omega H=\{h \in LH|h(1)=1\}, \]
and in fact $LH \simeq H \ltimes \Omega H$.  The integral lattice $\Lambda$ may be viewed as a subgroup of $\Omega H$, by identifying $\lambda \in \Lambda$ with the closed geodesic $t\mapsto \exp(t\lambda)$.  The subgroups $T \times \Lambda$ and $N_H(T) \ltimes \Lambda$ are the main examples of groups of the form $K\ltimes \Gamma$ (Section \ref{sec:GammaK}) that will concern us.

\begin{definition}
A \emph{proper Hamiltonian $LH$-space} $(\M,\omega_{\M},\Phi_\M)$ is a Banach manifold $\M$ equipped with a smooth proper action of $LH$, a weakly non-degenerate, $LH$-invariant closed 2-form $\omega$, and a smooth, proper, $LH$-equivariant map
\[ \Phi_{\M} \colon \M \rightarrow L\h^\ast \]
satisfying the moment map condition
\[ \iota(\xi_{\M})\omega_{\M}=-d\pair{\Phi_\M}{\xi}, \qquad \xi \in L\h.\]
\end{definition}
For a more detailed discussion of Hamiltonian loop group spaces, see for example \cite{MWVerlindeFactorization, AlekseevMalkinMeinrenken,BottTolmanWeitsman}.

\subsection{The global transversal of a Hamiltonian loop group space.}\label{sec:GlobTrans}
Let $\Phi_{\M} \colon \M \rightarrow L\h^\ast$ be a proper Hamiltonian $LH$-space.  The based loop group $\Omega H$ acts freely on $L\h^\ast$, and hence also on $\M$.  The quotient
\[ M=\M/\Omega H \]
is a compact finite-dimensional $H$-manifold, and is an example of a \emph{quasi-Hamiltonian $H$-space} \cite{AlekseevMalkinMeinrenken}.  Since $L\h^\ast/\Omega H \simeq H$, $M$ comes equipped with a \emph{group-valued moment map}
\[ \Phi \colon M \rightarrow H.\]

Let $\st{B}_q(\h/\t)$ denote the ball of radius $q>0$ centred at the origin in $\h/\t$.  The normalizer $N_H(T)$ acts on $\st{B}_q(\h/\t)$ by the adjoint action.  Using the inner product there is an $N_H(T)$-equivariant identification $\h/\t \simeq \t^\perp$, where $\t^\perp$ is the orthogonal complement of $\t$ in $\h$.  There is an $N_H(T)$-equivariant map
\[ r_T \colon T \times \st{B}_q(\h/\t)=T \times \st{B}_q(\t^\perp) \rightarrow H, \qquad (t,\xi) \mapsto t\exp(\xi) \]
and for $q$ sufficiently small it is a diffeomorphism onto a tubular neighborhood $U$ of $T$ in $H$.  Define the $N_H(T)$-invariant open submanifold $Y$ of $M$ to be the pre-image:
\[ Y=\Phi^{-1}(U).\]
Let $\Y$ be the $\Lambda$-covering space of $Y$ defined as the fibre product $Y \times_U (\t \times \st{B}_q(\h/\t))$, using the map
\[ r_T \circ (\exp_T,\id) \colon \t \times \st{B}_q(\h/\t) \rightarrow U.\]  
Thus $Y=\Y/\Lambda$ and we have a pullback diagram  
\begin{equation} 
\label{eqn:Pullback}
\xymatrixcolsep{7pc}
\xymatrix{
\Y \ar[r]^{\Phi_{\Y}=(\phi,\phi^{\h/\t})} \ar[d]^{\pi} & \t \times \st{B}_q(\h/\t) \ar[d]^{r_T\circ (\exp_T,\id)} \\
Y \ar[r]_{\Phi|_Y} & U
}
\end{equation}
The first component $\phi$ of the map $\Phi_{\Y}$ defined by \eqref{eqn:Pullback} is a moment map for the $N_H(T)\ltimes \Lambda$-action (using $\t \simeq \t^\ast$), and $\Y$ can be seen to be a degenerate Hamiltonian $N_H(T) \ltimes \Lambda$-space.

Interestingly, $\Y$ can be embedded $N_H(T)\ltimes \Lambda$-equivariantly into the infinite dimensional manifold $\M$, as a small `thickening' of the (possibly) singular closed subset
\[ \X=\Phi_{\M}^{-1}(\t) \]
where $\t \hookrightarrow L\h^\ast$ is embedded as constant connections.  In earlier work \cite[Section 6.4]{LMSspinor} with E. Meinrenken, we showed how to construct such an embedding, depending on the choice of a connection on the principal $\Omega H$-bundle $L\h^\ast \rightarrow H$.\footnote{In \cite{LMSspinor} we actually worked with a slightly larger space $\P H$ (a principal $H$-bundle over $L\h^\ast$), which was desirable for certain purposes, although we have avoided it here for simplicity.  The embedding referred to here can be constructed by the same method.}  In this realization, $\Y$ intersects all the $LH$-orbits in $\M$ transversally, and so we refer to it as a \emph{global transversal} of $\M$.  One reason this perspective is useful is for the description of a certain canonical spin-c structure on $\Y$, explained in the same article \cite{LMSspinor} (see also Remark \ref{rem:Canonical} below).

\subsection{$(\Lambda,T)$-admissible vector bundles on $\Y$.}\label{sec:SpinModY}
Let $G$ be a $U(1)$ central extension of $T \times \Lambda$.  For example, one might obtain such a central extension by pulling back a central extension of the loop group $LH$.  The length function on $\Lambda$ will be given by the norm defined by the inner product on $\t$.  We assume\footnote{Compare to \cite{FHTIII} for example, where this condition is called `topological regularity'.} that there is an injective homomorphism 
\[ \kappa \colon \Lambda \rightarrow \Lambda^\ast \]
such that lifts $\wh{t}$, $\wh{\lambda}$ of $t \in T$, $\lambda \in \Lambda$ respectively, obey the following commutation relation in $G$:
\begin{equation}
\label{eqn:CommutationRelation}
\wh{\lambda}\,\, \wh{t}\,\, \wh{\lambda}^{-1}\,\, \wh{t}^{-1}=t^{-\kappa(\lambda)}.
\end{equation}
Any $U(1)$ central extension of a torus is trivial, hence choosing a trivialization, $G$ is of the form $T \ltimes \hLambda$, for some central extension $\hLambda$ of $\Lambda$.

\begin{proposition}
\label{prop:LambdaTAdmissible}
Let $S \rightarrow \Y$ be a $G$-equivariant Hermitian vector bundle, where $G$ satisfies \eqref{eqn:CommutationRelation}.  Then $S$ is $(\Lambda,T)$-admissible.
\end{proposition}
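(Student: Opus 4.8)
The plan is to reduce the admissibility condition \eqref{eqn:AdmissibleCond} to a short computation with the $T$-weight decomposition of $L^2(\Y,S)$, using the commutation relation \eqref{eqn:CommutationRelation} to track how a lift $\wh{\lambda}$ of $\lambda\in\Lambda$ permutes the weight spaces. Here $K=T$ and $\Gamma=\Lambda$, with length function $l(\lambda)=|\lambda|$. Since $T$ is a torus, $C^\ast(T)\simeq C_0(\Lambda^\ast)$, and via \eqref{GroupAlgebra} an element $a\in C^\ast(T)$ is a family $(a_\mu)_{\mu\in\Lambda^\ast}$ of complex numbers with $a_\mu\to 0$ as $|\mu|\to\infty$; writing $s=\sum_{\mu\in\Lambda^\ast}s_\mu$ for the decomposition of $s$ into $T$-isotypical components (so $\|s\|^2=\sum_\mu\|s_\mu\|^2$), the operator $a$ sends $s_\mu$ to $a_\mu s_\mu$. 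Thus I must show that $\|a\cdot\wh{\lambda}\cdot s\|\to 0$ as $|\lambda|\to\infty$, for each fixed $a$ and $s$; note this is independent of the chosen lift $\wh{\lambda}$, as two lifts differ by a central element acting unitarily.

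First I would record the \emph{weight-shift lemma}: $\wh{\lambda}$ maps the weight-$\mu$ space into the weight-$(\mu+\kappa(\lambda))$ space. This follows directly from \eqref{eqn:CommutationRelation}: rearranging it gives $\wh{t}\,\wh{\lambda}=t^{\kappa(\lambda)}\,\wh{\lambda}\,\wh{t}$ in $G$, with $t^{\kappa(\lambda)}$ the corresponding central scalar, so if $\wh{t}\cdot s_\mu=t^\mu s_\mu$ then $\wh{t}\cdot(\wh{\lambda}\cdot s_\mu)=t^{\mu+\kappa(\lambda)}(\wh{\lambda}\cdot s_\mu)$. (If the central $U(1)\subset G$ acts on $L^2(\Y,S)$ with weight $m$, the shift is by $m\kappa(\lambda)$ instead; since $\kappa$, hence $m\kappa$ for $m\neq 0$, is injective, I suppress $m$ below.) As $\wh{\lambda}$ is unitary and distinct weight spaces are orthogonal, it follows for a general $s$ that
\[ \|a\cdot\wh{\lambda}\cdot s\|^2=\sum_{\mu\in\Lambda^\ast}|a_{\mu+\kappa(\lambda)}|^2\,\|s_\mu\|^2. \]

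It then remains to run a routine estimate on this sum. Given $\epsilon>0$, choose $N$ with $\sum_{|\mu|>N}\|s_\mu\|^2<\epsilon$; the portion of the sum with $|\mu|>N$ is then at most $\|a\|^2\epsilon$, uniformly in $\lambda$. The portion with $|\mu|\le N$ is a \emph{finite} sum. Since $\kappa\colon\Lambda\to\Lambda^\ast$ is injective between lattices of equal rank, it extends to a linear isomorphism $\t\to\t^\ast$, so $|\kappa(\lambda)|\ge c|\lambda|$ for some $c>0$; hence $|\mu+\kappa(\lambda)|\to\infty$, and so $a_{\mu+\kappa(\lambda)}\to 0$, as $|\lambda|\to\infty$ for each fixed $\mu$. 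A finite sum of terms tending to $0$ tends to $0$, so $\limsup_{|\lambda|\to\infty}\|a\cdot\wh{\lambda}\cdot s\|^2\le\|a\|^2\epsilon$; letting $\epsilon\to 0$ establishes \eqref{eqn:AdmissibleCond}.

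The only substantive ingredients are the weight-shift lemma and the observation that injectivity of $\kappa$ forces $|\kappa(\lambda)|\to\infty$ as $l(\lambda)\to\infty$; this is precisely where hypothesis \eqref{eqn:CommutationRelation} with $\kappa$ injective is needed. Were $\kappa$ to have a nonzero kernel, then along the infinite set $\ker\kappa\subset\Lambda$ the operator $\wh{\lambda}$ would commute with the $T$-action on $L^2(\Y,S)$, giving $\|a\cdot\wh{\lambda}\cdot s\|=\|a\cdot s\|\not\to 0$ in general, in agreement with the Remark following Definition \ref{def:GammaKAdmissible}. Everything else is bookkeeping.
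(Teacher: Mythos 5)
Your proof is correct, and it takes a genuinely different route from the paper's. Both arguments extract the same essential consequence of \eqref{eqn:CommutationRelation}—that conjugation by $\wh{\lambda}$ shifts the $T$-weight by $\kappa(\lambda)$, so $a\cdot\wh{\lambda}=\wh{\lambda}\cdot(\text{translate of }a\text{ by }\kappa(\lambda))$—but they exploit it differently. The paper first invokes a density argument to reduce to $a\in C^\infty(T)$ and $s\in C_c^\infty(\Y,S)$, then recognizes $\|a\cdot\wh{\lambda}\cdot s\|^2$ as the Fourier coefficient $f^\wedge(\kappa(\lambda),\kappa(\lambda))$ of a smooth function $f$ on $T\times T$, and concludes by rapid decay of Fourier coefficients of smooth functions. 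You instead work directly with arbitrary $a\in C^\ast(T)\simeq C_0(\Lambda^\ast)$ and $s\in L^2(\Y,S)$ via the isotypical decomposition, obtain the exact identity $\|a\cdot\wh{\lambda}\cdot s\|^2=\sum_\mu|a_{\mu+\kappa(\lambda)}|^2\|s_\mu\|^2$, and close with a tail-plus-finite-sum estimate. This avoids the density reduction and the smoothness of $a$ entirely, at the modest cost of making the $\epsilon$-bookkeeping explicit; the paper's version is shorter given the Fourier-decay black box. One small remark: both proofs implicitly take the central $U(1)\subset G$ to act on $S$ with a fixed nonzero weight (the paper's computation with the scalar $t^{\kappa(\lambda)}$ assumes weight $1$); your parenthetical about a general central weight $m\neq 0$, and the subsequent observation that $|\kappa(\lambda)|\geq c|\lambda|$ because an injective lattice map extends to a linear isomorphism $\t\to\t^\ast$, are both correct and make this dependence explicit, which is a useful clarification.
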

\begin{proof}
By a density argument, it suffices to show that for each $s \in C^\infty_c(\Y,S)$ and $a \in C^\infty(T)$ we have
\[ \lim_{|\lambda|\rightarrow \infty} \|a\cdot \wh{\lambda} \cdot s\|=0.\]
Using \eqref{eqn:CommutationRelation} we find
\begin{align*} 
a \cdot \wh{\lambda} \cdot s&=\int_T a(t) t\cdot \wh{\lambda} \cdot s \, dt\\
&=\wh{\lambda} \cdot \int_T a(t)t^{\kappa(\lambda)} t\cdot s \, dt.
\end{align*}
Hence
\begin{align*} 
\|a\cdot \wh{\lambda} \cdot s\|^2&=\int_{T \times T} t_1^{\kappa(\lambda)}t_2^{\kappa(\lambda)} a(t_1)a(t_2)\big(t_1\cdot s,t_2\cdot s\big)_{L^2(\Y,S)}\, dt_1\,dt_2\\
&=f^\wedge\big(\kappa(\lambda),\kappa(\lambda)\big),
\end{align*}
where $f^\wedge$ is the (inverse) Fourier transform of
\[ f(t_1,t_2)=a(t_1)a(t_2)\big(t_1\cdot s,t_2\cdot s\big)_{L^2(\Y,S)}.\]
The result now follows because $f\colon T\times T \rightarrow \bC$ is smooth, hence its Fourier coefficients go to zero as $|\lambda| \rightarrow \infty$.
\end{proof}
\begin{remark}
\label{rem:Canonical}
In \cite{LMSspinor} we constructed a canonical spinor module on $\Y$, which in fact was the pullback to $\Y$ of a $\wh{LH}$-equivariant spinor module for the vector bundle $\pi^\ast TM$ on $\M$; the relevant central extension of $LH$ here is the \emph{spin central extension} (cf. \cite{PressleySegal,FHTII}).  If $H$ is semisimple, the resulting central extension of $T \times \Lambda$ satisfies \eqref{eqn:CommutationRelation}, and the conditions in Proposition \ref{prop:LambdaTAdmissible} are met.  One can also twist by a prequantum line bundle, cf. \cite{AMWVerlinde} for a discussion of prequantum line bundles on Hamiltonian loop group spaces.  If $H$ is not semi-simple, one may still be able to meet the conditions in Proposition \ref{prop:LambdaTAdmissible} by twisting the canonical spinor module with a suitable line bundle.  These examples were our principal motivation.
\end{remark}

\subsection{K-homology class associated to $\Y$.}\label{sec:KHomGlobTrans}
Choose a $T\times \Lambda$-invariant complete Riemannian metric $g$ on $\Y$.  We have the following immediate consequence of Theorem \ref{thm:GammaKAdmissible}, Corollary \ref{cor:Refine} and Proposition \ref{prop:LambdaTAdmissible}.
\begin{corollary}
\label{cor:KHomologyTrans}
Let $S \rightarrow \Y$ be a $G$-equivariant $\bZ_2$-graded Hermitian vector bundle, where $G$ is a central extension of $T \times \Lambda$ satisfying \eqref{eqn:CommutationRelation}.  Let $\st{D}$ be a $G$-equivariant, odd, symmetric $1^{st}$-order elliptic operator with finite propagation speed acting on sections of $S$.  Then $\st{D}$ defines a class $[\st{D}] \in \K^0(G \ltimes C_0(Y))$.
\end{corollary}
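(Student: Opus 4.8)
The plan is to obtain the class $[\st{D}]$ by assembling the three results cited immediately before the statement: Theorem \ref{thm:GammaKAdmissible}, its refinement Corollary \ref{cor:Refine}, and Proposition \ref{prop:LambdaTAdmissible}. Concretely, I would apply Corollary \ref{cor:Refine} with $K=T$ and $\Gamma=\Lambda$, so that the whole task reduces to checking that the data $(\Y,g,S,\st{D},G)$ here fulfils the hypotheses of that corollary.

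First I would pin down the group-theoretic set-up. The group $T\times\Lambda$ is a semidirect product $K\ltimes\Gamma$ with $K=T$ compact and $\Gamma=\Lambda$ discrete, the $T$-action on $\Lambda$ being trivial; the norm coming from the fixed inner product on $\t$ restricts to a length function on the lattice $\Lambda$ which is proper (any infinite subset of a lattice is unbounded in norm) and automatically $T$-invariant. As recorded in Section \ref{sec:SpinModY}, since every $U(1)$ central extension of a torus splits, the central extension $G$ of $T\times\Lambda$ is isomorphic to $T\ltimes\wh{\Lambda}$ for a central extension $\wh{\Lambda}$ of $\Lambda$; thus $G$ has exactly the form $K\ltimes\wh{\Gamma}$ demanded by Corollary \ref{cor:Refine}. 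Finally, $\Lambda\subset\Omega H$ and $\Omega H$ acts freely and properly on $\M$, hence on $\Y$, so the $\Lambda$-action on $\Y$ is free and proper with quotient $Y=\Y/\Lambda$.

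Next I would verify the analytic hypotheses. The metric $g$ has been chosen $T\times\Lambda$-invariant and complete. The operator $\st{D}$ is $G$-equivariant, odd, symmetric, first-order, elliptic, and of finite propagation speed; by Chernoff's theorem \cite{ChernoffSelfAdjoint} (recalled in Section \ref{sec:KHomologyPrelim}), completeness of $(\Y,g)$ together with finite propagation speed upgrades symmetry to essential self-adjointness, which is the form in which Corollary \ref{cor:Refine} requires the operator. Since $G$ obeys the commutation relation \eqref{eqn:CommutationRelation}, Proposition \ref{prop:LambdaTAdmissible} gives that $S$ is $(\Lambda,T)$-admissible. All hypotheses of Corollary \ref{cor:Refine} are now in place, and it yields $[\st{D}]\in\K^0(G\ltimes C_0(Y))$, completing the argument.

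I do not anticipate a genuine obstacle for this corollary: the substantive work has already been done upstream. The compactness estimates that turn the descent of $(L^2(\Y,S),\rho,\st{D})$ into a bona fide $\KK$-cycle --- the partition-of-unity construction on $\Y$, the mutually-orthogonal-operators lemma, and the finite-propagation argument controlling off-diagonal terms --- all live in the proof of Theorem \ref{thm:GammaKAdmissible} and in the mild modification behind Corollary \ref{cor:Refine}. The only point in the present deduction that warrants any care is the passage between the abstract admissibility condition \eqref{eqn:AdmissibleCond} and the explicit commutation relation \eqref{eqn:CommutationRelation}, and that is precisely what Proposition \ref{prop:LambdaTAdmissible} settles, via the decay of Fourier coefficients of a smooth function on $T\times T$.
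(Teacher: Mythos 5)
Your proposal is correct and follows exactly the route the paper intends: the paper states this corollary without a separate proof, as an ``immediate consequence'' of Theorem \ref{thm:GammaKAdmissible}, Corollary \ref{cor:Refine} and Proposition \ref{prop:LambdaTAdmissible}, which is precisely the reduction you carry out (including the use of Chernoff's theorem to upgrade ``symmetric'' to ``essentially self-adjoint,'' and the splitting of the central extension over the torus factor to put $G$ in the form $T\ltimes\wh{\Lambda}$ required by Corollary \ref{cor:Refine}).
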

\begin{remark}
A motivating example is the spin-c Dirac operator (Example \ref{ex:DiracOp}) acting on sections of the canonical spinor module on $\Y$ (possibly twisted by a suitable prequantum line bundle) of Remark \ref{rem:Canonical}.
\end{remark}
As explained in Definition \ref{def:CapProd}, we obtain elements of $R^{-\infty}(T)$ by restricting to $T$ and taking cap products with classes $\st{x} \in \K^0_T(Y)$:
\begin{equation} 
\label{eqn:CapProdT}
\st{x} \cap \iota_T^\ast[\st{D}] \in \K^0(C^\ast(T))=R^{-\infty}(T).
\end{equation}

An element $\chi \in R^{-\infty}(T)$ determines a unique multiplicity function
\[ m \colon \Lambda^\ast \rightarrow \bZ \]
giving the coefficients of the formal expansion of $\chi$ in terms of the irreducible characters for $T$.  The multiplicity functions obtained in \eqref{eqn:CapProdT} are invariant under the action of $\kappa(\Lambda)$ on $\Lambda^\ast$ by translations.  This is a consequence of the $T \ltimes \hLambda$-equivariance of $[\st{D}]$ (and of $\st{x}$; note that $\st{x}$ can be promoted to a $G=T\ltimes \wh{\Lambda}$-equivariant element, since $\wh{\Lambda}$ acts trivially on $Y$), and the commutation relations \eqref{eqn:CommutationRelation}.

\subsection{Weyl anti-symmetry.}\label{sec:WeylAnti}
The cap products $\st{x} \cap \iota_T^\ast [\st{D}]$ encode K-theoretic invariants of the symplectic reduced spaces, analogous to the way twisted Duistermaat-Heckman distributions encode cohomological invariants of the reduced spaces.  Of particular interest is an analogue of the ordinary Duistermaat-Heckman measure, which we now explain.  

Let $\n_-$ (resp. $\n_+$) denote the direct sum of the negative (resp. positive) root spaces.  Thus 
\[ (\h/\t)_{\bC} \simeq \n_- \oplus \n_+. \]
Given $\xi \in (\h/\t)_{\bC}$, let $\xi_-$ (resp. $\xi_+$) be its component in $\n_-$ (resp. $\n_+$), hence for $\xi \in \h/\t$ real one has $\xi_+=\ol{\xi_-}$ (complex conjugation in $(\h/\t)_{\bC}$).  Consider the $T$-equivariant morphism of vector bundles over $\st{B}_q(\h/\t)$ given by
\[ \st{b} \colon \st{B}_q(\h/\t) \times \wedge^{\tn{even}}\n_- \rightarrow \st{B}_q(\h/\t) \times \wedge^{\tn{odd}}\n_-, \qquad \st{b}(\xi)=\sqrt{2}\big(\varepsilon(\xi_-)-\iota(\xi_+)\big)\]
where $\varepsilon$ (resp. $\iota$) denotes exterior multiplication (resp. contraction with respect to the bilinear form on $(\h/\t)_{\bC}$); this is the formula for the action of $\xi \in \h/\t$ on the $\Cl(\h/\t)$-module $\wedge \n_-$.  The morphism $\st{b}$ determines a K-theory class
\[ \Bott(\n_-) \in \K^0_T(\st{B}_q(\h/\t)),\]
called the \emph{Bott class}, which generates $\K^0_T(\st{B}_q(\h/\t))$ as a free $\K^0_T(\pt)=R(T)$-module of rank $1$.  Note that the construction depends on the choice of a complex structure (i.e. $\n_-$), and so does the corresponding K-theory element.

Recall $\phi^{\h/\t} \colon Y \rightarrow \st{B}_q(\h/\t)$ is a proper $N_H(T)$-equivariant map.  Let $\X$ denote the (possibly singular) closed subset $(\phi^{\h/\t})^{-1}(0)=\Phi_{\M}^{-1}(\t)$.
\begin{definition}
\label{def:QuantX}
Let $[\X]\in \K^0_T(Y)$ denote the pullback under $\phi^{\h/\t}$ of the Bott class $\Bott(\n_-) \in \K^0_T(\st{B}_q(\h/\t))$.  The index pairing $Q(\X,\st{D})$ is
\[ Q(\X,\st{D})=[\X] \cap \iota_T^\ast [\st{D}] \in \K^0(C^\ast(T))\simeq R^{-\infty}(T).\]
\end{definition}
\begin{remark}
The reason for the notation $[\X]$ is that this class plays the role of a Poincare dual to $\X$ in K-theory.  Thus studying the index \emph{pairing} in Definition \ref{def:QuantX} is a substitute for studying the index of a Dirac operator on the (possibly) singular space $\X$ (and accordingly we think of $Q(\X,\st{D})$ as the `quantization' of $\X$).  This analogy becomes precise when $0$ is a regular value of $\phi^{\h/\t}$ (implying $\X$ is smooth): in this case we may construct $\Y$ to be diffeomorphic to $\X \times (\h/\t)$, and it follows that $Q(\X,\st{D})$ is the $T$-index of a Dirac operator on $\X$ for the induced spinor module $S_{\X}=\Hom_{\Cl(\h/\t)}(\wedge \n_+,S)$.  For further context see the discussion in \cite[Section 6]{LMSspinor}.  The analogous construction for the Duistermaat-Heckman measure (involving cohomology instead of K-theory) is discussed in \cite{DHNormSquare}.  
\end{remark}

The Bott element, and hence also $Q(\X,\st{D})$, has an additional anti-symmetry property under the action of the normalizer $N_H(T)$ of $T$ in $H$.  To deduce the consequences of this symmetry, we will use material from Appendix \ref{sec:GrpAut}.  We now assume the vector bundle $S$ and $1^{st}$ order elliptic operator $\st{D}$ are equivariant for a central extension $G$ of $N_H(T)\ltimes \Lambda$ (rather than $T \times \Lambda$).  Indeed this is the case for the spin-c Dirac operator and the canonical spinor module of Remark \ref{rem:Canonical} (the latter comes from a $\wh{LH}$-equivariant spinor module on $\M$), as well as twists by various equivariant line bundles.  Then the discussion in Section \ref{sec:KHomGlobTrans} carries through without change, and we obtain a class $[\st{D}] \in \K^0(G\ltimes C_0(Y))$, where $G$ now denotes a central extension of $N_H(T)\ltimes \Lambda$.

Let $w \in W=N_H(T)/T$, and choose a lift $h \in N_H(T)$.  The element $w$ determines an automorphism of $T$, given by $t \mapsto t^w=hth^{-1}$.  Let $h \in N_H(T)$ act on $f \in C_0(\h/\t)$ by $f^h:=f\circ \Ad_{h^{-1}}$.  Using the results of Appendix \ref{sec:GrpAut}, $w$ determines an automorphism $\tau_w$ of $\K^0_T(\h/\t)=\KK_T(\bC,C_0(\h/\t))$.

\begin{proposition}
\label{prop:BottWeyl}
Under the action of $\tau_w$ we have
\[ \tau_w\big(\Bott(\n_-)\big)=(-1)^{l(w)}\bC_{\rho-w\rho}\otimes \Bott(\n_-),\]
where $l(w)$ is the length of the Weyl group element $w$.  Since $\phi^{\h/\t}$ is $N_H(T)$-equivariant, it follows that $\tau_w([\X])=(-1)^{l(w)}\bC_{\rho-w\rho}\otimes [\X]$.
\end{proposition}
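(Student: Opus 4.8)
The plan is to reduce the claim to a standard identity for the Weyl denominator. I would proceed in three steps: first identify $\tau_w\big(\Bott(\n_-)\big)$ with the Bott class of a $w$-shifted polarization of $(\h/\t)_{\bC}$; then apply a ``detecting'' homomorphism $\K^0_T(\st{B}_q(\h/\t)) \to R(T)$; and finally carry out the resulting computation and invoke injectivity of that homomorphism. The statement for $[\X]$ will then follow formally.

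\emph{Step 1.} The element $w$, with lift $h \in N_H(T)$, induces the automorphism $t \mapsto t^w = hth^{-1}$ of $T$ and the orthogonal linear isomorphism $\Ad_h$ of $\h/\t$, which intertwines the $T$-action with its $w$-twist. Using the formalism relating group automorphisms and the descent map in $\KK$-theory from Appendix \ref{sec:GrpAut}, I would show that $\tau_w\big(\Bott(\n_-)\big)$ is represented by the cycle obtained from the Bott cycle determined by the bundle morphism $\st{b}(\xi) = \sqrt{2}\big(\varepsilon(\xi_-) - \iota(\xi_+)\big)$ on $C_0(\h/\t, \wedge\n_-)$ by transporting it along $\Ad_h$. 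Since $\Ad_h$ preserves the bilinear form on $(\h/\t)_{\bC}$, the induced isomorphism of exterior algebras $\wedge\Ad_h$ carries $\st{b}$ to the analogous morphism for the polarization $\n_-^{(w)} := \Ad_h(\n_-) = \bigoplus_{\alpha \in \R_+}\g_{-w\alpha}$, the sum of the negative root spaces for the positive system $w\R_+$. Hence $\tau_w\big(\Bott(\n_-)\big) = \Bott\big(\n_-^{(w)}\big)$, and it remains to compare this with $\Bott(\n_-)$.

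\emph{Steps 2--3.} Evaluation at the origin $\ev_0 \colon C_0(\st{B}_q(\h/\t)) \to \bC$ is $T$-equivariant, so it induces an $R(T)$-linear map $\epsilon \colon \K^0_T(\st{B}_q(\h/\t)) \to R(T)$. Because the morphism defining the Bott class of a polarization $P$ vanishes at the origin, $\epsilon(\Bott(P)) = \sum_i (-1)^i [\wedge^i P] = \prod_\mu (1-e^\mu)$, the product over the $T$-weights $\mu$ of $P$; in particular $\epsilon(\Bott(\n_-)) = \prod_{\alpha \in \R_+}(1 - e^{-\alpha})$ and, by Step 1, $\epsilon(\tau_w\Bott(\n_-)) = \prod_{\alpha\in\R_+}(1 - e^{-w\alpha})$. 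I would then reindex this product over $\beta \in w\R_+$, write $1 - e^{-\beta} = e^{-\beta/2}(e^{\beta/2} - e^{-\beta/2})$, observe that the half-sum of $w\R_+$ equals $w\rho$ and that exactly $l(w) = |w\R_+ \cap \R_-|$ of the factors $e^{\beta/2} - e^{-\beta/2}$ change sign under $\beta \mapsto -\beta \in \R_+$, obtaining
\[ \prod_{\alpha\in\R_+}(1-e^{-w\alpha}) = (-1)^{l(w)}\, e^{\rho - w\rho} \prod_{\alpha\in\R_+}(1-e^{-\alpha}) = \epsilon\big((-1)^{l(w)}\bC_{\rho-w\rho}\otimes\Bott(\n_-)\big).\]
Finally $\epsilon$ is injective: $\K^0_T(\st{B}_q(\h/\t))$ is a free $R(T)$-module of rank one on $\Bott(\n_-)$ (Thom isomorphism), $R(T)$ is a domain, and $\epsilon(\Bott(\n_-)) \neq 0$; this forces $\tau_w\Bott(\n_-) = (-1)^{l(w)}\bC_{\rho-w\rho}\otimes\Bott(\n_-)$. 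For $[\X]$ I would pull back along the $N_H(T)$-equivariant map $\phi^{\h/\t}$, using naturality of $\tau_w$ under equivariant maps (Appendix \ref{sec:GrpAut}) together with $(\phi^{\h/\t})^\ast\bC_{\rho-w\rho} = \bC_{\rho-w\rho}$.

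The step I expect to be the main obstacle is Step 1: extracting from the machinery of Appendix \ref{sec:GrpAut} the clean statement that $\tau_w$ replaces $\n_-$ by $\n_-^{(w)}$, and in particular fixing the $w$ versus $w^{-1}$ convention so that the weight shift comes out as $\rho - w\rho$ (not $\rho - w^{-1}\rho$, which differs in general). Steps 2--3 are routine; as an alternative to the global Weyl-denominator manipulation one can pass from $\n_-$ to $\n_-^{(w)}$ by flipping one root-space pair at a time, each flip of a weight-$\mu$ line multiplying the Bott class by $-e^{-\mu}$, so that the $l(w)$ flips reassemble to the factor $(-1)^{l(w)} e^{\rho - w\rho}$.
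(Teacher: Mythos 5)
Your proposal is correct and follows essentially the same route as the paper: both reduce, via the rank-one freeness of $\K^0_T(\st{B}_q(\h/\t))$ over $R(T)$ on the Bott class, to identifying the coefficient by restricting to the origin, where the computation is the standard $W$-(anti)symmetry of the Weyl denominator. The only cosmetic differences are that you make the injectivity of evaluation at the origin explicit and absorb the sign $(-1)^{l(w)}$ into the character identity $\prod_{\alpha\in\R_+}(1-e^{-w\alpha})=(-1)^{l(w)}e^{\rho-w\rho}\prod_{\alpha\in\R_+}(1-e^{-\alpha})$, whereas the paper extracts the sign from a separate orientation argument and the weight shift from the $W$-invariance of $\wedge\n_-\otimes\bC_\rho$.
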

\begin{proof}
By Bott periodicity $\K^0_T(\h/\t)$ is a free rank 1 module over $\K^0_T(\pt)$ generated by the Bott element $\Bott(\n_-)$.  Hence there must exist an element $V \in \K^0_T(\pt)\simeq R(T)$ such that
\[ \tau_w\big(\Bott(\n_-)\big)=V \otimes \Bott(\n_-) \in \K^0_T(\h/\t).\]
Thinking of $V$ as a $\bZ_2$-graded representation of $T$, $V$ must have degree $(-1)^{l(w)}$ since $w$ changes the orientation (hence the grading) by $(-1)^{l(w)}$; this explains the factor $(-1)^{l(w)}$ in the formula.  To determine the $T$-action, it is enough to consider the pullback of $\Bott(\n_-)$ to $\{0 \} \subset \h/\t$, which is the graded $T$-representation $\wedge \n_- \in \K^0_T(\pt)\simeq R(T)$.  Since $h$ fixes $0 \in \h/\t$ one just needs to determine the action of $\sigma_w^{-1}$ (see Appendix \ref{sec:GrpAut} for this notation).  Recall $\wedge \n_- \otimes \bC_\rho$ has weights which are symmetric with respect to the $W$-action, hence
\[ \sigma_w^{-1}(\wedge \n_- \otimes \bC_\rho)=\wedge \n_- \otimes \bC_\rho,\]
which implies
\[ \sigma_w^{-1}(\wedge \n_-)=\wedge \n_- \otimes \bC_{\rho-w\rho} \in \K^0_T(\pt).\]
\end{proof}

Using the results of Appendix \ref{sec:GrpAut} we now find:
\begin{align*}
\tau_w^{\bC} \otimes Q(\X,\st{D}) &= \tau_w^{\bC}\otimes j_T([\X])\otimes (\tau_w^{C_0(Y)})^{-1}\otimes \tau_w^{C_0(Y)}\otimes \iota_T^\ast[\st{D}]\\
&=j_T(\tau_w[\X])\otimes \tau_w^{C_0(Y)}\otimes \iota_T^\ast[\st{D}]\\
&=(-1)^{l(w)}\bC_{\rho-w\rho}\otimes j_T([\X])\otimes \tau_w^{C_0(Y)}\otimes \iota_T^\ast[\st{D}]\\
&=(-1)^{l(w)}\bC_{\rho-w\rho}\otimes j_T([\X])\otimes \iota_T^\ast[\st{D}]\\
&=(-1)^{l(w)}\bC_{\rho-w\rho}\otimes Q(\X,\st{D}),
\end{align*}
where in the second, third, fourth lines we used Propositions \ref{prop:DescentMapTwist2}, \ref{prop:BottWeyl}, \ref{prop:DescentMapTwist3} respectively.  Together with the $\kappa(\Lambda)$-invariance, it follows that the multiplicity function 
\[ m \colon \Lambda^\ast \rightarrow \bZ \]
of $ Q(\X,\st{D})$ is \emph{alternating} under the $\rho$-shifted action of the affine Weyl group determined by $\kappa$.  Explicitly, for $w=(\ol{w},\eta) \in W \ltimes \Lambda=W_{\aff}$, one has
\begin{equation} 
\label{eqn:ShiftedAction}
m(w\cdot (\lambda+\rho)-\rho)=(-1)^{l(w)}m(\lambda),
\end{equation}
where $w\cdot \xi=\ol{w}\xi+\kappa(\eta)$.

\subsection{Special case: $H$ simple and simply connected.}
In this case the $U(1)$ central extensions of $LH$ are classified by an integer $k$ called the \emph{level}.  If $k \ne 0$ then the corresponding extension obeys \eqref{eqn:CommutationRelation}.  Moreover, the spin central extension mentioned in Remark \ref{rem:Canonical} is known to be at level equal to the dual Coxeter number $\hvee$ of $\h$ (cf. \cite{PressleySegal,LMSspinor}).

Let $\wh{LH}$ be the extension corresponding to the generator $k=1$; it corresponds to the map $\kappa \colon \Lambda \rightarrow \Lambda^\ast$ induced by the \emph{basic inner product}, the unique invariant inner product on $\h$ such that the squared lengths of the short co-roots is $2$.  There is an interesting class of representations of $\wh{LH}$, known as \emph{positive energy representations}, which have a rich theory parallel to the representation theory of compact Lie groups, cf. \cite{KacBook,PressleySegal}.  By definition, a positive energy representation is one that admits an extension to a representation of the semi-direct product $S^1_{\rot}\ltimes \wh{LH}$ ($S^1$ acts on $\wh{LH}$ by a lift of its action on $LH$ by rigid rotations), such that the weights of the $S^1_{\rot}$ action are bounded below and of finite multiplicity.  Positive energy representations have formal characters $\chi \in R^{-\infty}(T\times S^1_{\rot})$ which can be computed by a version of the Weyl character formula.  For each $k=1,2,3,...$, there is an analogue of the representation ring, the \emph{level} $k$ \emph{fusion ring} $R_k(H)$, with basis given by the (finite) set of irreducible positive energy representations having central weight $k$ and minimal $S^1_{\rot}$ weight normalized to be $0$.

Let $k$ be a positive integer, and let $\kappa_{k+\hvee}=(k+\hvee)\kappa \colon \Lambda \rightarrow \Lambda^\ast$ be the map determined by $(k+\hvee)$ times the basic inner product.  There is a 1-1 correspondence between elements of $R^{-\infty}(T)$ alternating under the $\rho$-shifted action of the affine Weyl group determined by $\kappa_{k+\hvee}$ (equation \eqref{eqn:ShiftedAction}), and elements of the level $k$ fusion ring $R_k(H)$ (cf. \cite[Chapter 14]{PressleySegal}, \cite{KacBook,FHTII}) given as follows: for $\chi \in R^{-\infty}(T\times S^1_{\textnormal{rot}})$ the formal character of an element of the fusion ring, the corresponding element of $R^{-\infty}(T)$ is
\[ \big(\chi \cdot \Delta \big)|_{q=1} \]
where $\Delta=\prod_{\bm{\alpha}>0}(1-e_{-\bm{\alpha}})$ is the \emph{Weyl-Kac denominator} and $q \in S^1_{\textnormal{rot}}$.

Let $S$ be the canonical level $\hvee$ spinor module on $\Y$ (Remark \ref{rem:Canonical}) twisted by a level $k>0$ prequantum line bundle $L$, and let $\st{D}$ be a spin-c Dirac operator acting on sections of $S$.
\begin{definition}
\label{def:QuantizeM}
We define the level $k$ \emph{quantization} of $\M$ to be the element $Q(\M,L) \in R_k(H)$ such that
\[ \big(Q(\M,L)\cdot \Delta \big)|_{q=1}=Q(\X, \st{D}) \in R^{-\infty}(T).\]
\end{definition}

\begin{remark}
As mentioned in the introduction, one can do a completely analogous construction to \ref{def:QuantizeM} in the case of a compact Hamiltonian $H$-space $\mu \colon M \rightarrow \h^\ast$ and the (possibly singular) closed subset $X=\mu^{-1}(\t^\ast)$, and the result is equivalent to the usual definition in terms of the index of an $H$-equivariant Dirac operator on $M$.  See also the discussion in \cite[Section 6]{LMSspinor}. 
\end{remark}

\begin{remark}[\emph{Non-simply connected $H$}]
If $H$ is simple (or semi-simple) but not necessarily simply connected, one has a generalization of Definition \ref{def:QuantizeM}.  Using the canonical spin-c structure of Remark \ref{rem:Canonical} (optionally twisted by a prequantum line bundle), one obtains a $W_{\aff}$ anti-symmetric element $Q(\X,\st{D}) \in R^{-\infty}(T)$ as explained in Section \ref{sec:WeylAnti}.  In general the result is the $q=1$ restriction of the Weyl-Kac numerator (as in \ref{def:QuantizeM}) of a (unique) $\bZ_2$-\emph{graded} positive energy representation (cf. \cite[Chapter 14]{PressleySegal} for the general character formula, and \cite{FHTII,FHTIII} for further discussion of graded representations and gradings of loop groups).  The formal character of a $\bZ_2$-graded representation is the difference of the formal characters of the two homogeneous subspaces (in general, the homogeneous subspaces will themselves only be representations of the connected component of the identity in $LH$).
\end{remark}

\subsection{A cycle for the cap product.}
We now return to a slightly more general setting and describe an unbounded cycle representing the cap product $\st{x}\cap \iota_T^\ast[\st{D}]$, where $\st{x} \in \K^0_T(Y)$, and $\st{D}$ is a spin-c Dirac operator (Example \ref{ex:DiracOp}) acting on sections of a spinor module $S$ satisfying the conditions in Corollary \ref{cor:KHomologyTrans}.  References in which similar products are studied include \cite{KucerovskyCallias, BunkeRelativeIndex} and \cite[Exercise 11.8.14]{HigsonRoe}.  Our work is simpler if we choose the metric on $Y$ and the cycle representing $\st{x}$ with a little care.

\subsubsection{Metrics on $Y$, $\Y$.}
Recall from Section \ref{sec:GlobTrans} that $Y=\Phi^{-1}(U)$, where
\[ U \simeq T \times \st{B}_q(\h/\t).\]
Redefining $Y$ to be smaller if necessary, we may assume $q$ is a regular value of the function $r=|\phi^{\h/\t}|$.  Properness implies there is a small interval $(q-2\epsilon,q+2\epsilon)$, $0<2\epsilon<q$ such that
\[ r^{-1}(q-2\epsilon,q+2\epsilon) \simeq Q \times (q-2\epsilon,q+2\epsilon), \qquad Q=r^{-1}(q).\]
The closure $\ol{Y}$ of $Y$ in $M$ is the smooth manifold with boundary $\partial \ol{Y}=Q$. Let
\[ \Cyl_Q=r^{-1}\big((q-\epsilon,q)\big),\]
and define $x \colon \Cyl_Q \rightarrow (1, \infty)$ by
\[ e^{-x+1}=\epsilon^{-1}(q-r).\]
Note $r \rightarrow q$ corresponds to $x \rightarrow \infty$, while $r \rightarrow q-\epsilon$ corresponds to $x \rightarrow 1$.  Extend $x$ to a smooth function
\begin{equation} 
\label{eqn:Extendx}
x \colon Y \rightarrow [0,\infty) 
\end{equation}
such that $x^{-1}(1,\infty)=\Cyl_Q$.  Using a partition of unity, one can construct a complete $N_H(T)$-invariant Riemannian metric $g$ on $Y$ such that
\[ g|_{\Cyl_Q}=dx^2+g_Q,\]
where $g_Q$ is a metric on the compact manifold $Q$; the open set $\Cyl_Q$ is a cylindrical end for the metric $g$.  The pullback of $g$ to $\Y$ is a $N_H(T)\ltimes \Lambda$-invariant complete metric on $\Y$.

\subsubsection{Unbounded cycles for $\K_T^0(Y)$.}\label{sec:UnboundedRepresentative}
An element $\st{x} \in \K^0_T(Y)$ may be represented by a pair $(E,\theta)$ consisting of a $\bZ_2$-graded $T$-equivariant Hermitian vector bundle $E=E^+\oplus E^-$ and an odd, self-adjoint $T$-equivariant bundle endomorphism $\theta$, which is invertible outside a compact subset of $Y$.

We may assume $\theta$ is bounded and $\theta^2=\id$ on $\Cyl_Q$.  After adding a vector bundle $E^\prime$ to $E^+$ and $E^-$ such that $E^+\oplus E^\prime$ is trivial and extending $\theta$ to $E^\prime$ by the identity (this does not change the class in $\K_T^0(Y)$), we can assume $E^{\pm}|_{\Cyl_Q} \simeq \Cyl_Q \times \bC^k$ and 
\[ \theta|_{\Cyl_Q}\colon \Cyl_Q \times \bC^k \rightarrow \Cyl_Q \times \bC^k \] 
is the identity map.  We may further assume that the $T$-action on $E|_{\Cyl_Q} \simeq [q,\infty) \times E|_Q$ is the product action; this follows, for example, from the rigidity of compact group actions on compact manifolds, applied to the unit sphere bundle in $E|_Q$.  In terms of bounded cycles for KK-theory, the corresponding element is $[(C_0(Y,E),\theta)]$, where the $C_0(Y)$-valued inner product on $C_0(Y,E)$ is given by the Hermitian structure.

\begin{definition}
\label{def:fcnf}
Let
\[ f\colon [0,\infty) \rightarrow [1,\infty) \]
be a smooth, monotone non-decreasing function, equal to $1$ on a neighborhood of $[0,1]$, such that $f(t) \rightarrow +\infty$ and $f(t)^{-2}f^\prime(t) \rightarrow 0$ as $t \rightarrow +\infty$.  Precomposing with $x \colon Y \rightarrow [0,\infty)$ (see \eqref{eqn:Extendx}) we obtain a function $f \circ x$ on $Y$; to keep notation simple we will denote this composition by $f$.
\end{definition}

The bundle endomorphism $f\theta$ is odd, self-adjoint by construction, and it is not difficult to check that it is also regular as an unbounded operator on the Hilbert $C_0(Y)$-module $C_0(Y,E)$.  Moreover $(1+f^2\theta^2)^{-1}$ vanishes at infinity by construction, so defines a compact operator on the Hilbert $C_0(Y)$-module $C_0(Y,E)$.  Hence the pair $(E,f\theta)$ is an unbounded cycle representing $\st{x} \in \K^0_T(Y)$.

\subsubsection{The $\KK$ product.}\label{sec:CapProduct}
Let $G$, $S$ and $\st{D}$ be as in Corollary \ref{cor:KHomologyTrans}, and $[\st{D}] \in \K^0(G\ltimes C_0(Y))$ the corresponding K-homology class.  Let $(E,f\theta)$ be a cycle representing $\st{x} \in \K^0_T(Y)$, as constructed in Section \ref{sec:UnboundedRepresentative}.

Under the descent map $j_T$, the Hilbert $C_0(Y)$ module $C_0(Y,E)$ is sent to the $C^\ast(T)$-$T\ltimes C_0(Y)$ bimodule denoted $T \ltimes C_0(Y,E)$.  The following lemma describes the Hilbert space for a cycle representing the cap product $\st{x}\cap \iota_T^\ast [\st{D}]$.
\begin{lemma}
\label{LemmaHilbSpace}
There is an $T$-equivariant isomorphism of $\bZ_2$-graded Hilbert spaces
\[ T\ltimes C_0(Y,E) \wh{\otimes}_{T\ltimes C_0(Y)} L^2(\Y,S) \simeq L^2(\Y,E \wh{\otimes} S),\]
where the left-hand-side is a completed, $\bZ_2$-graded tensor product of $T \ltimes C_0(Y)$-modules.\footnote{Here and wherever possible below we have written $E$ instead of $\pi^\ast E$.}
\end{lemma}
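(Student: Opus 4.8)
The plan is to unwind the definition of the interior (completed) tensor product of Hilbert $C^\ast$-modules and identify it concretely. Recall that $T \ltimes C_0(Y,E) = C^\ast(T,C_0(Y,E))$ is the completion of $C_c(T,C_0(Y,E))$, and $L^2(\Y,S)$ carries the representation of $T \ltimes C_0(Y)$ coming from the covariant pair consisting of the $T$-action on $L^2(\Y,S)$ and the representation of $C_0(Y)$ by multiplication (pulled back along $\pi\colon\Y\to Y$). The algebraic tensor product $C_c(T,C_0(Y,E)) \odot_{\tn{alg}} L^2(\Y,S)$ carries the pre-inner product
\[
(e_1 \otimes s_1, e_2 \otimes s_2) = \big(s_1, \ti{\rho}\big((e_1,e_2)_{T\ltimes C_0(Y)}\big)s_2\big)_{L^2(\Y,S)},
\]
and the plan is to write down an explicit map to $L^2(\Y, E\wh\otimes S)$, check it is isometric for this pre-inner product, and check it has dense range; the completed tensor product is by definition the Hausdorff completion, so this suffices.

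First I would define the candidate map on elementary tensors. Given $e \in C_c(T, C_0(Y,E))$ — a compactly supported continuous function $t \mapsto e(t)$ with values in sections of $E$ — and $s \in L^2(\Y,S)$, set
\[
\Psi(e \otimes s) = \int_T \big(\pi^\ast e(t)\big)\wh\otimes (t\cdot s)\, dt \in L^2(\Y, E\wh\otimes S),
\]
using the pullback $\pi^\ast E$ (suppressed in the notation) and the diagonal structure. The integrand is an $L^2$ section of $E\wh\otimes S$ depending continuously on $t$ with compact support in $t$, so the integral makes sense. Next I would compute $\|\Psi(e\otimes s)\|^2_{L^2(\Y,E\wh\otimes S)}$ directly as a double integral over $T\times T$, and separately compute the pre-inner product $(e\otimes s, e\otimes s)$ using the explicit formulas in Section \ref{sec:CrossProdDescent} for the $C_c(T,C_0(Y))$-valued inner product on $C_c(T,C_0(Y,E))$ and for the integrated representation $\ti\rho$; a change of variables in the $T$-integrals (using invariance of Haar measure and the covariance relation $\rho_T(t)\rho_{C_0(Y)}(\psi)\rho_T(t)^{-1} = \rho_{C_0(Y)}(t.\psi)$, together with the fact that $\pi$ is $T$-equivariant so $\pi^\ast(t.\psi) = t.\pi^\ast\psi$) should show these two expressions agree. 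This gives that $\Psi$ descends to an isometry of the Hausdorff completion, hence of $T\ltimes C_0(Y,E)\wh\otimes_{T\ltimes C_0(Y)} L^2(\Y,S)$, into $L^2(\Y,E\wh\otimes S)$. That $\Psi$ is $T$-equivariant is clear from the formula, once one notes the $T$-action on the tensor product is the diagonal one; that it intertwines the gradings is immediate since $E$ and $S$ are $\bZ_2$-graded and the construction respects degrees.

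The remaining point is surjectivity (equivalently, density of the range). Here I would use the local triviality of $E$ together with a partition of unity on $Y$ subordinate to trivializing charts pulled back to $\Y$: locally $E\wh\otimes S \simeq S^{\oplus k}$ on a set of the form $\pi^{-1}(V)$ with $V$ trivializing, and since $\pi^{-1}(V) = \bigsqcup_{\lambda\in\Lambda}\lambda\cdot U$ for a fundamental-domain piece $U$, a section of $S$ supported in $U$ times a compactly supported function on $T$ maps under $\Psi$ (after integrating) onto a dense set of such local sections; summing over the partition of unity and over the charts, using that $\Lambda$ acts freely and properly so that $Y=\Y/\Lambda$ is genuinely covered, one gets density of the range in $L^2(\Y,E\wh\otimes S)$. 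I expect the main obstacle to be purely bookkeeping: carefully matching the two double-integral expressions for the norms, keeping track of Haar measure conventions, the pullback $\pi^\ast$ versus $E$ on $Y$, and the Koszul signs in the graded tensor product — there is no conceptual difficulty, but the change-of-variables computation must be done with some care, and one should double-check that no modular-function factor appears (it does not, since $T$ is compact and unimodular).
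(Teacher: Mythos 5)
Your proposal is correct and takes essentially the same approach as the paper: the paper defines exactly the same map $e\wh{\otimes} s \mapsto \int_T e(t)\wh{\otimes}\, t\cdot s\,dt$ and asserts that checking it intertwines the inner products is ``straightforward but tedious,'' which is the change-of-variables computation you outline. Your surjectivity argument via local trivializations and a partition of unity fills in a point the paper passes over by simply asserting the map extends to an isometric isomorphism.
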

\begin{proof}
Let $e \in T \ltimes C_0(Y,E)$ and $s \in L^2(\Y,S)$.  The isomorphism is given on the dense subspace of elements of the form $e\wh{\otimes} s$ by
\[ e \wh{\otimes} s \mapsto \T_e(s)= \int_T e(t)\wh{\otimes} t\cdot s \,\,dt \]
where $e(t) \in C_0(Y,E)$ and $t\cdot s \in L^2(\Y,S)$ is the section $(t\cdot s)(y)=t(s(t^{-1}y))$.  It is straightforward but tedious to verify that this intertwines the inner products, so extends to an isometric isomorphism between Hilbert spaces.  It is also straightforward to verify that this intertwines the $T$-actions.
\end{proof}
Choose a $T$-invariant Hermitian connection $\nabla^E$ on $E$, which extends the trivial connection on $\Cyl_Q$.  Using $\nabla^E$, we couple the spin-c Dirac operator $\st{D}$ to $E$ to obtain an operator $\st{D}^E$ acting on sections of $E\wh{\otimes} S$; explicitly, in terms of a local orthonormal frame $v_i$, $i=1,...,\dim(\Y)$
\[ \st{D}^E(e\wh{\otimes} s)=(-1)^{\deg(e)}\big(\nabla^E_{v_i}e\wh{\otimes} \c(v_i)s+e\wh{\otimes} \st{D}s\big),\]
where the $(-1)^{\deg(e)}$ appears because of the Koszul sign rule, as $\c(v_i)$ has odd degree.  Let $\nabla^{\End(E)}$ denote the induced connection on $\End(E)$, defined by the equation
\[ \nabla^{\End(E)}_v\theta=[\nabla^E_v,\theta]\]
where both sides are viewed as operators on smooth sections of $E$.
\begin{lemma}
\label{lem:KKcycle}
The pair $(L^2(\Y,E\wh{\otimes} S),\st{D}^E+f\theta)$ is a cycle for $\K^0(C^\ast(T))$. 
\end{lemma}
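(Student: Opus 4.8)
The plan is to verify directly that the triple $\big(L^2(\Y,E\wh{\otimes}S),\rho,\st{D}^E+f\theta\big)$ satisfies the conditions of Definition~\ref{def:Unbounded}, with $\rho$ the representation of $C^\ast(T)$ obtained from the $T$-action on $L^2(\Y,E\wh{\otimes}S)$ (this is the $C=\bC$ case, where the Hilbert $\bC$-module is an honest Hilbert space, and the statement that the triple is a cycle for $\K^0(C^\ast(T))$ is exactly the assertion that it is an unbounded Kasparov cycle for $(C^\ast(T),\bC)$). Set $\st{D}^E_f=\st{D}^E+f\theta$. There are three things to check: (i) $\st{D}^E_f$ is an odd, regular, self-adjoint operator; (ii) $\rho(a)(\st{D}^E_f{}^2+1)^{-1}$ is compact for all $a\in C^\ast(T)$; and (iii) $\rho(a)$ preserves the domain and has bounded graded commutator with $\st{D}^E_f$ for $a$ in a dense subalgebra.

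For (i), first observe that $\st{D}^E$ is an odd, $1^{\text{st}}$-order elliptic operator on sections of the $\bZ_2$-graded bundle $E\wh{\otimes}S$ on the complete manifold $\Y$, with the same finite propagation speed $c_{\st{D}}=1$ as $\st{D}$ (the zeroth-order coupling term does not affect the symbol), hence $\st{D}^E$ is essentially self-adjoint by Chernoff's theorem; then $\st{D}^E_f=\st{D}^E+f\theta$ differs by the symmetric, though unbounded because of the factor $f$, zeroth-order perturbation $f\theta$. To handle this I would use that $f\theta$ is symmetric and that on $\Cyl_Q$ we arranged $\theta^2=\id$ and $\theta$ flat, so that $(f\theta)^2=f^2$ there, and away from the cylindrical end $f$ is bounded; a Kato--Rellich / relative-boundedness argument (or the standard fact that a first-order elliptic operator plus a symmetric potential of linear-type growth, which is elliptic away from the zero set of $\theta+$ lower order, is essentially self-adjoint on a complete manifold — cf.\ the treatment of Witten/Callias-type operators, e.g.\ \cite[Exercise 11.8.14]{HigsonRoe}, \cite{KucerovskyCallias}) gives essential self-adjointness and regularity of the closure.

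For (ii), the key point is that $(\st{D}^E_f{}^2+1)^{-1}$ should be compact after multiplying by $\rho(a)$. I would expand $\st{D}^E_f{}^2=(\st{D}^E)^2+f^2\theta^2+\big(\text{cross terms}\big)$, where the cross terms $\st{D}^E(f\theta)+(f\theta)\st{D}^E$ involve $\c(df)\theta$ (bounded, since $f'/f^2\to 0$ controls things after dividing by $\st{D}^E_f{}^2+1$) and $f\,\nabla^{\End(E)}\theta$ which is compactly supported because $\theta$ is locally constant on $\Cyl_Q$ and away from there $f$ is bounded. On the cylindrical end the operator looks like a harmonic-oscillator-type operator in the $x$-direction (because $f(x)\theta$ grows and $\theta^2=1$), so $(\st{D}^E_f{}^2+1)^{-1}$ has a discrete-spectrum flavour transverse to $Q$ and its Schwartz kernel decays as $x\to\infty$; combined with Proposition~\ref{prop:KHomologyClass}/Proposition~\ref{RellichLemma}-type compactness on the compact-minus-cylinder part and the descent/crossed-product compactness machinery of Section~\ref{sec:FredholmProperty} (this is precisely where the $(\Lambda,T)$-admissibility of $S$, hence of $E\wh{\otimes}S$, enters, via Lemma~\ref{prop:compactness}), one gets $\rho(a)(\st{D}^E_f{}^2+1)^{-1}\in\bK$. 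The cleanest route: show $(\st{D}^E_f{}^2+1)^{-1}=(\st{D}^E{}^2+1)^{-1}+(\text{correction})$ and reduce compactness of the first term to Theorem~\ref{thm:GammaKAdmissible} applied to $\st{D}^E$ (which satisfies all its hypotheses), while the correction is controlled by $f'/f^2\to0$ and the compact support of $\nabla\theta$.

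Condition (iii) is routine: $a\in C^\ast(T)$ commutes with $\st{D}^E$ because $\st{D}^E$ is $T$-equivariant (the connection $\nabla^E$ is $T$-invariant) and commutes with $f\theta$ because $f$ is pulled back from $Y$, hence $T$-invariant, and $\theta$ is $T$-equivariant — so in fact $[\st{D}^E_f,\rho(a)]=0$ for all $a$, which trivially preserves the domain. The main obstacle I anticipate is (i)--(ii), i.e.\ making the unbounded perturbation $f\theta$ rigorous: one must be careful that $\st{D}^E_f$ is self-adjoint and regular with the stated resolvent compactness, since $f$ is genuinely unbounded; the saving grace, which I would exploit, is the behaviour built into Definition~\ref{def:fcnf} ($f'f^{-2}\to0$, $f$ eventually monotone to $+\infty$, $f\equiv1$ near $[0,1]$) together with $\theta^2=\id$ and flatness of $E,\theta$ on $\Cyl_Q$, which together make $\st{D}^E_f$ behave near the cylindrical end like a model Dirac operator on a cylinder plus a growing potential of Callias type — essentially self-adjoint with compact resolvent transverse to $Q$, exactly as needed.
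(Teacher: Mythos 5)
Your high-level picture is right: the lemma is a Callias/Schr\"odinger-type compactness statement, and the $(\Lambda,T)$-admissibility has to be redeployed to control the $\Lambda$-covering direction. But there are two genuine gaps.

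First, the ``cleanest route'' you suggest for (ii) does not work. Theorem~\ref{thm:GammaKAdmissible} applied to $\st{D}^E$ only yields $[\st{D}^E]\in\K^0(T\ltimes C_0(Y))$, i.e.\ compactness of $a\psi(1+(\st{D}^E)^2)^{-1}$ for $\psi\in C_0(Y)$; since $Y$ is noncompact, $a(1+(\st{D}^E)^2)^{-1}$ by itself is \emph{not} compact. The first term in your proposed resolvent decomposition therefore already fails, and the potential $f\theta$ cannot be treated as a correction --- it is the whole point of the construction, precisely what removes the continuous spectrum coming from the cylindrical end.

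Second, and more importantly, you do not identify (and hence do not resolve) the step where the paper's adaptation of the Theorem~\ref{thm:GammaKAdmissible} machinery is genuinely delicate. The partition of unity here is built from a \emph{finite} cover of $T$, not of $Y$, pulled up to $\Y$ via the map $\phi$ from the diagram \eqref{eqn:Pullback}. Since $\phi$ alone is not proper (only $\Phi_\Y=(\phi,\phi^{\h/\t})$ is), the lifted cutoffs $\sigma_j^{\gamma'}$ are \emph{not} compactly supported on $\Y$; so Proposition~\ref{RellichLemma} cannot be used to show $\sigma_j^{\gamma'}\chi(\st{D}^E+f\theta)\nu_j$ is compact, which is the step that makes Lemma~\ref{prop:compactness} go through in the original setting. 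The replacement is Proposition~\ref{CompactnessBoundedBelow}: one shows $1+(\st{D}^E+f\theta)^2 \ge (\st{D}^E)^2 + \pi^\ast V$ with $V=-|df|\,|\theta|-f|\nabla^E\theta|+f^2\vartheta^2+1$, which equals $-f'(x)+f(x)^2+1$ on $\Cyl_Q$ and hence is proper and bounded below on $Y$ by the design of $f$; $\pi^\ast V$ is not proper on all of $\Y$, but it \emph{is} proper on the support of each $\sigma_j^{\gamma'}$ (escape to infinity there must go up the cylindrical end), and Proposition~\ref{CompactnessBoundedBelow} then gives compactness of $\sigma_j^{\gamma'}(1+(\st{D}^E+f\theta)^2)^{-1}$. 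After this, the $\Lambda$-summation via admissibility proceeds as before, with the final step simplified because $\J$ is finite. This Schr\"odinger-for-Rellich substitution is the one essential new ingredient, and it is exactly what is missing from your write-up. As a minor point on (i): essential self-adjointness of $\st{D}^E+f\theta$ comes directly from Chernoff (Proposition in Section~2.3), since $f\theta$ is zeroth order and does not change the propagation speed; no Kato--Rellich argument is needed, and regularity is automatic for self-adjoint operators on a Hilbert space.
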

\begin{proof}
The argument is an adaptation of the proof of Theorem \ref{thm:GammaKAdmissible}.  Choose a \emph{finite} open covering $V_j$, $j \in \J$ of $T$ satisfying conditions (a), (b) in Section \ref{sec:PartUnity} with respect to the covering space $\exp \colon \t \rightarrow T$ and covering group $\Gamma=\Lambda$.  Let $U_j^\prime=r_j(V_j)$ be lifts to $\t$, and $U_j=\phi^{-1}(U_j^\prime) \subset \Y$ (see \eqref{eqn:Pullback} for the definition of $\phi$).  For $\lambda \in \Lambda$, define $\sigma_j^{\lambda}$, $\nu_j^\lambda$, $\ti{\nu}_j$, $\ti{\sigma}_j$ as in Section \ref{sec:PartUnity}.  Lemma \ref{lem:finitesum} works as before.  The proof of Lemma \ref{prop:compactness} requires modification, because $\nu_j$, $\sigma_j$ are no longer compactly supported (the map $\phi$ is not proper), so we cannot use Proposition \ref{RellichLemma} to conclude that the operator
\[ T=\sum_{\gamma^\prime \in \Gamma^\prime}\sigma^{\gamma^\prime}_j \chi(\st{D}^E+f\theta) \nu_j \]
is compact.  Instead we will use the main result of Appendix \ref{sec:SchrodingerOps}.

Squaring $\st{D}^E+f\theta$ we find
\[ (\st{D}^E)^2+[\st{D}^E,f\theta]+f^2\theta^2. \]
Since $\theta$ is self-adjoint, $\theta^2(y)$ is a non-negative endomorphism of $E$ for each $y \in Y$, and moreover $\theta^2|_{\Cyl_Q}=1$.  Let $\vartheta^2(y)$ be the minimum eigenvalue of $\theta^2(y)$.  In terms of a local orthonormal frame $v_1,...,v_{\dim(\Y)}$ the graded commutator $[\st{D}^E,f\theta]$ is the bundle endomorphism
\[ -\theta \wh{\otimes} \c(df)-\nabla_{v_i}^{\End(E)}\theta \wh{\otimes}\c(v_i) \]
of $E\wh{\otimes}S$.  Thus
\[ A=1+\big(\st{D}^E+f\theta\big)^2 \ge \big(\st{D}^E\big)^2+\pi^\ast V \]
where
\[ V=-|df|\cdot |\theta|-f|\nabla^E\theta|+f^2\vartheta^2+1.\]
We claim that $V$ is proper and bounded below on $Y$.  Since $Y \setminus \Cyl_Q$ is compact, it suffices to consider the restriction of $V$ to $\Cyl_Q \simeq Q \times (1,\infty)$ (in terms of the coordinate $x$), which is  
\[ V|_{\Cyl_Q}=-f^\prime(x)+f(x)^2+1, \qquad x \in [1,\infty) \]
and this is proper and bounded below by assumption (Definition \ref{def:fcnf}).

The pullback $\pi^\ast V$ to $\Y$ is no longer proper, but it is proper (and bounded below) on the support of $\sigma_j^{\gamma^\prime}$.  By Proposition \ref{CompactnessBoundedBelow}, the operator
\[ \sigma_j^{\gamma^\prime} A^{-1} \]
is compact.  Any function $\chi \in C_0(\bR)$ can approximated in norm by a product of $(1+x^2)^{-1}$ and a bounded continuous function.  Thus
\[ \sigma_j^{\gamma^\prime}\chi(\st{D}^E+f\theta) \]
is compact for any $\chi \in C_0(\bR)$.  The rest of the proof of Lemma \ref{prop:compactness} is as before.  The proof of Theorem \ref{thm:GammaKAdmissible} proceeds as before, except the final step is easier: since the index set $\J$ is \emph{finite}, the result follows from the fact that a finite sum of compact operators is compact.
\end{proof}
\begin{theorem}
\label{thm:CapProductResult}
Let $\st{x} \in \K^0_T(Y)$ be represented by a cycle $(E,f\theta)$ as in Section \ref{sec:UnboundedRepresentative}, and let $[\st{D}^E+f\theta]$ be the cycle in Lemma \ref{lem:KKcycle}.  Then
\[ [\st{D}^E+f\theta]=\st{x}\cap \iota_T^\ast [\st{D}] \in \K^0(C^\ast(T)). \]
\end{theorem}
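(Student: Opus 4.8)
The plan is to apply Kucerovsky's product criterion, Proposition~\ref{prop:SuffCond}, with $A=C^\ast(T)$, $B=T\ltimes C_0(Y)$, and $C=\bC$. For the $(A,B)$-cycle I would take the descent $j_T(E,f\theta)=\big(T\ltimes C_0(Y,E),\widetilde{f\theta}\,\big)$ of the unbounded representative of $\st{x}$ from Section~\ref{sec:UnboundedRepresentative} (the operator being $f\theta$ applied pointwise over $T$), and for the $(B,C)$-cycle the triple $\big(L^2(\Y,S),\rho,\st{D}\big)=\iota_T^\ast[\st{D}]$. By Lemma~\ref{LemmaHilbSpace} the interior tensor product $\E=(T\ltimes C_0(Y,E))\wh{\otimes}_\rho L^2(\Y,S)$ is $T$-equivariantly $L^2(\Y,E\wh{\otimes}S)$, and under this identification the operator $\st{D}_1\otimes 1$ becomes multiplication by the endomorphism $f\theta$ of $\pi^\ast E$, extended by the identity on $S$. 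Lemma~\ref{lem:KKcycle} already shows that $\big(L^2(\Y,E\wh{\otimes}S),\st{D}^E+f\theta\big)$ is an unbounded cycle for $\K^0(C^\ast(T))$, so it remains only to verify the connection and semi-boundedness conditions of Proposition~\ref{prop:SuffCond}, which (since $C=\bC$) involve only Hilbert spaces and a scalar inequality.

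For the connection condition I would use the dense set of smooth compactly supported sections $e$ (elements of $C_c(T,C^\infty_c(Y,E))$, whose span is dense in $T\ltimes C_0(Y,E)$). For such $e$, the map $\T_e\colon L^2(\Y,S)\to L^2(\Y,E\wh{\otimes}S)$ of Lemma~\ref{LemmaHilbSpace}, the local Leibniz formula for $\st{D}^E$, and $T$-equivariance of $\st{D}$ give that $\T_e\st{D}-(-1)^{\deg(e)}\st{D}^E\T_e$ is, up to sign, Clifford multiplication by the section $\nabla^E e$, which is smooth and compactly supported, hence bounded; adding $(-1)^{\deg(e)}f\theta\T_e$, which is bounded because $f$ is bounded on the compact support of $e$, shows $\T_e\st{D}-(-1)^{\deg(e)}(\st{D}^E+f\theta)\T_e$ extends to a bounded (hence adjointable) operator. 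The companion statement for $\T_e^\ast\st{D}-(-1)^{\deg(e)}\st{D}_2\T_e^\ast$ follows by passing to adjoints, using the description of $\T_e^\ast$ recorded before Proposition~\ref{prop:SuffCond}.

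For the semi-boundedness condition, the inclusion $\dom(\st{D}^E+f\theta)\subset\dom(f\theta\otimes 1)$ holds on the core $C^\infty_c(\Y,E\wh{\otimes}S)$ and extends via the estimate below. For $e$ in this domain, self-adjointness of $\st{D}^E$ and of $f\theta$ gives
\[ \big((\st{D}^E+f\theta)e,(f\theta)e\big)+\big((f\theta)e,(\st{D}^E+f\theta)e\big)=\big(e,[\st{D}^E,f\theta]e\big)+2\|f\theta e\|^2,\]
where $[\st{D}^E,f\theta]$ is the graded commutator, equal to the bundle endomorphism $-\theta\wh{\otimes}\c(df)-\nabla^{\End(E)}_{v_i}\theta\wh{\otimes}\c(v_i)$ of $E\wh{\otimes}S$ computed in the proof of Lemma~\ref{lem:KKcycle}. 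The second summand is bounded, since $\nabla^{\End(E)}\theta$ vanishes on $\Cyl_Q$ (where $\theta\equiv\id$ and $\nabla^E$ is trivial) and $\theta$ is smooth on the compact set $x^{-1}([0,1])$. For the first summand I would use the growth condition of Definition~\ref{def:fcnf}: $f^{-2}f'\to 0$ gives, for each $\delta>0$, a constant $C_\delta$ with $|df|=|f'|\,|dx|\le \delta f^2+C_\delta$ on $\Cyl_Q$ (and $df=0$ elsewhere), so on $\Cyl_Q$, where $\theta^2=\id$, one has the pointwise operator bound $|\theta\wh{\otimes}\c(df)|\le\delta(f\theta)^2+C_\delta$, hence $|(e,\theta\wh{\otimes}\c(df)e)|\le\delta\|f\theta e\|^2+C_\delta\|e\|^2$. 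Taking $\delta=1$ and combining everything yields $\big((\st{D}^E+f\theta)e,(f\theta)e\big)+\big((f\theta)e,(\st{D}^E+f\theta)e\big)\ge \|f\theta e\|^2-k\|e\|^2\ge -k\|e\|^2$ for a suitable constant $k$, which is the required inequality.

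All the data ($E$, $S$, $\theta$, $f$, the connections, the isomorphism of Lemma~\ref{LemmaHilbSpace}) is $T$-equivariant, so Proposition~\ref{prop:SuffCond} yields $[\st{D}^E+f\theta]=j_T(\st{x})\otimes\iota_T^\ast[\st{D}]=\st{x}\cap\iota_T^\ast[\st{D}]$ in $\K^0(C^\ast(T))$. I expect the semi-boundedness estimate to be the main obstacle: it is precisely there that the growth condition $f^{-2}f'\to 0$ built into Definition~\ref{def:fcnf} is needed, in order to absorb the unbounded quantity $df$ into the favorable term $2\|f\theta e\|^2$; the connection condition, by contrast, is a routine Leibniz computation once the tensor-product Hilbert space has been identified.
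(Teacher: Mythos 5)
Your proposal is correct and follows essentially the same route as the paper: both apply Kucerovsky's criterion (Proposition \ref{prop:SuffCond}) with the same identification of the tensor-product Hilbert space via Lemma \ref{LemmaHilbSpace}, verify the connection condition on the same dense set of compactly supported elements by the Leibniz rule, and establish semi-boundedness by bounding the graded commutator $[\st{D}^E,f\theta]$ below using the growth condition $f^{-2}f'\to 0$. The only cosmetic difference is that the paper recycles the lower bound on $[\st{D}^E,f\theta]+f^2\theta^2$ already established in the proof of Lemma \ref{lem:KKcycle} (and verifies the domain inclusion via an explicit sequence argument), whereas you re-derive the estimate directly with the $\delta f^2+C_\delta$ absorption.
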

\begin{proof}
We will verify the conditions in Proposition \ref{prop:SuffCond}.  Consider the dense set of $e \in T\ltimes C_0(Y,E)$ of the form $e=a\sigma$, where $a \in C(T)$ and $\sigma \in C_c^\infty(Y,E)$.  We must show that the operator
\[ \T_e \st{D}-(-1)^{\deg(e)}(\st{D}^E+f\theta)\T_e \]
is bounded, where $\T_e\colon L^2(\Y,S) \rightarrow L^2(\Y,E \wh{\otimes} S)$ is defined in the proof of Lemma \ref{LemmaHilbSpace}.  Since $\sigma$ has compact support, $f$ is bounded on the support of $e$, hence $f\theta \T_e$ is bounded.  In terms of a local orthonormal frame $v_1,...,v_{\dim(\Y)}$, the difference $\T_e \st{D}-(-1)^{\deg(e)}\st{D}^E\T_e$ is the operator $L^2(\Y,S) \rightarrow L^2(\Y,E\wh{\otimes}S)$ given by 
\begin{equation}
\label{eqn:ProductCommutator}
s \mapsto -\int_T dt \,\, a(t)\pi^\ast\nabla^E_{v_i}\sigma\wh{\otimes}\c(v_i)(t\cdot s).
\end{equation}
Since $\sigma \in C_c^\infty(Y,E)$ and $\pi^\ast \nabla^E \sigma$ is $\Lambda$-invariant, the norm of $\pi^\ast \nabla^E\sigma$ is bounded.  Thus \eqref{eqn:ProductCommutator} is a bounded operator.  Likewise $\T_e^\ast(f\theta)$ is bounded, while $\st{D}\T_e^\ast-(-1)^{\deg(e)}\T_e^\ast \st{D}^E$ is given by a similar expression to \eqref{eqn:ProductCommutator}, and so is bounded also.

The operators $\st{D}^E+f\theta$ and $f\theta$ have a common core consisting of smooth compactly supported sections of $E \wh{\otimes} S$.  We showed in the proof of Lemma \ref{lem:KKcycle} that 
\begin{equation}
\label{eqn:SemiBded}
[\st{D}^E,f\theta]+f^2\theta^2
\end{equation}
is semi-bounded below, say by $-c \le 0$, on the common core.  Let $\sigma_n \in C_c^\infty(Y,E\wh{\otimes}S)$ be a sequence of smooth compactly supported sections such that $\sigma_n \rightarrow 0$ and $(\st{D}^E+f\theta)\sigma_n \rightarrow 0$ in $L^2$.  Since
\[ \|\st{D}^E\sigma_n\|^2+\big(([\st{D}^E,f\theta]+f^2\theta^2+c)\sigma_n,\sigma_n\big)_{L^2}=\|(\st{D}^E+f\theta)\sigma_n\|^2+c\|\sigma_n\|^2 \rightarrow 0 \]
the semi-boundedness implies $\st{D}^E\sigma_n \rightarrow 0$, and hence $f\theta \sigma_n \rightarrow 0$ also.  This shows that the graph norm for $\st{D}^E+f\theta$ is stronger than that for $f\theta$, hence $\dom(\st{D}^E+f\theta) \subset \dom(f\theta)$.  The inequality in the semi-boundedness condition of Proposition \ref{prop:SuffCond} amounts to showing that the graded commutator $[\st{D}^E+f\theta,f\theta]=[\st{D}^E,f\theta]+2f^2\theta^2$ is semi-bounded below, but this follows from the semi-boundedness of \eqref{eqn:SemiBded} and $f^2\theta^2$.
\end{proof}

\subsection{Remarks on other approaches to the quantization problem.}\label{ssec:OtherApproaches}
In this subsection we assume $H$ is simple and simply connected.  To date, two other index or K-theoretic approaches to the `quantization' of general proper Hamiltonian $LH$-spaces have appeared in the literature. 

The earliest approach was due to Meinrenken, who studied the finite dimensional compact \emph{quasi-Hamiltonian} $H$-space $M=\M/\Omega H$.  $M$ comes naturally equipped with a `group-valued moment map' $\Phi \colon M \rightarrow H$.  Let $[M] \in \KK_H(\Cl(M),\bC)$ denote Kasparov's fundamental class, the class of the de Rham-Dirac operator acting on differential forms on $M$.  Meinrenken \cite{MeinrenkenKHomology} defined the quantization of $M$ to be a push-forward $(\Phi,\E)_\ast[M]$ to a suitable \emph{twisted K-homology} group of $H$.  The definition of the push-forward map requires constructing a suitable Morita morphism $\E$ \cite{DDDFunctor,MeinrenkenKHomology}, which is the heart of the construction.  

The Freed-Hopkins-Teleman Theorem relates the fusion ring $R_k(H)$, $k>0$ with the twisted K-homology of $H$ (for twist $k+\hvee \in \bZ \simeq H^3_H(H,\bZ)$, $\hvee$ the dual Coxeter number of $H$).  Thus it seems reasonable to define the quantization of $\M$ to be the image of $(\Phi,\E)_\ast[M]$ under the Freed-Hopkins-Teleman isomorphism.  This definition turns out to have many desirable properties \cite{MeinrenkenKHomology}.

In \cite{LoizidesGeomKHom} the first author proved that Meinrenken's approach is equivalent to the approach developed here: it was shown that $Q(\M,L)$ as in Definition \ref{def:QuantizeM} coincides with the image of $(\Phi,\E)_\ast[M]$ under the Freed-Hopkins-Teleman isomorphism.  To summarize the approach of \cite{LoizidesGeomKHom}, an `index' map $\scr{I}$ was constructed from the twisted K-homology of $H$ to the formal completion $R^{-\infty}(T)$, using $C^\ast$-algebra methods.  The fairly explicit description of this map in terms of cycles makes it straight-forward to verify (by checking on generators) that it is compatible with the Freed-Hopkins-Teleman isomorphism, in the sense that the image of any class in the twisted K-homology group is the Weyl-Kac numerator of the corresponding positive energy representation.  

On the other hand, when $\scr{I}\big((\Phi,\E)_\ast[M]\big)$ is computed, one obtains exactly the $T$-index of the operator $\st{D}^E+f\theta$ on $\Y$ with $[(E,\theta)]$ being the Bott element, as in Definition \ref{def:QuantizeM} and Theorem \ref{thm:CapProductResult}.  The way in which the $\Lambda$-covering space $\Y$ of $Y \subset M$ appears turns out to be quite natural and conceptually clear: the $\KK$-theory cycle describing the push-forward $(\Phi,\E)_\ast[Y]$ ($\scr{I}$ involves a restriction to $Y \subset M$) naturally involves a Dirac-type operator acting on a smooth Hilbert bundle over $Y$ with fibres $\ell^2(\Lambda)$ (tensored with a finite dimensional bundle), and this is equivalent to a Dirac-type operator acting on a finite dimensional bundle over the covering space.  See \cite{LoizidesGeomKHom} for details.

A different approach to quantization for Hamiltonian $LH$-spaces due to the second author appeared in \cite{SongDiracLoopGroup}.  In this approach, an operator acting on sections of an infinite dimensional bundle over the quasi-Hamiltonian space $M$ was constructed, whose index was directly a positive energy representation of $LH$ (strictly speaking a formal difference of two such).  Thus the intermediaries appearing in the other two approaches (the Freed-Hopkins-Teleman Theorem in \cite{MeinrenkenKHomology}, and multiplication by the Weyl-Kac denominator in Definition \ref{def:QuantizeM}) were not needed.  Unfortunately the definition of the operator in \cite{SongDiracLoopGroup} was quite complicated; for example, the operator was constructed locally in cross-sections, and patched together using a partition of unity.  The definition also involved combining geometric Dirac operators on submanifolds of $M$ with Kostant's algebraic relative `cubic' Dirac operator for loop groups.  This has made it difficult to compare with the other approaches.  We hope to return to this question in the future.  Related work was done by D. Takata \cite{Takata1,Takata2} for the case that $H$ is a torus.

\appendix

\section{Group automorphisms and the descent map.}\label{sec:GrpAut}
Throughout this section $G$ will be a locally compact group equipped with a left-invariant Haar measure, $\sigma \in \Aut(G)$ will be a group automorphism preserving the Haar measure, and $\langle \sigma \rangle \ltimes G$ the semi-direct product group.  We write $g \mapsto g^\sigma$ for the action of $\sigma$ on $g \in G$.

Let $A$ be a $\langle \sigma \rangle \ltimes G$-$C^\ast$ algebra; $A$ is also a $G$-$C^\ast$ algebra by restricting the group action.  Define the $G$-$C^\ast$ algebra $A^\sigma$ to be the $C^\ast$ algebra $A$ equipped with the new $G$-action $\pi^\sigma(g)=\pi(g^\sigma)$, where $\pi$ is the $G$-action on $A$.  Since $A$ is a $\langle \sigma \rangle \ltimes G$-$C^\ast$ algebra, there is an action map 
\[ \alpha^A_\sigma \colon A \rightarrow A.\]  
Viewed as a map $A \rightarrow A^\sigma$, $\alpha^A_\sigma$ is an isomorphism of $G$-$C^\ast$ algebras.  In particular, it induces maps
\[ (\alpha^A_\sigma)_\ast \colon \K_0^G(A) \rightarrow \K_0^G(A^\sigma), \qquad (\alpha^A_\sigma)^\ast \colon \K^0_G(A^\sigma) \rightarrow \K^0_G(A).\]
Below we usually omit the $\ast$ from the notation.

For any group homomorphism $\sigma \colon G_1 \rightarrow G_2$ one has a \emph{restriction homomorphism} (cf. \cite{KasparovNovikov})
\[ \sigma \colon \KK_{G_2}(A,B) \rightarrow \KK_{G_1}(A,B),\]
where $A$, $B$ become $G_1$-$C^\ast$ algebras by pre-composing the $G_2$ action with $\sigma$.  As a special case, if $\sigma \in \Aut(G)$ is a group automorphism, one obtains a map
\[ \sigma \colon \KK_G(A,B) \rightarrow \KK_G(A^\sigma,B^\sigma).\]

\begin{definition}
Let $A$, $B$ be $\langle \sigma \rangle \ltimes G$-$C^\ast$ algebras, with $\alpha_\sigma^A$, $\alpha_\sigma^B$ the corresponding action maps.  Define
\[ \tau_\sigma \colon \KK_G(A,B) \rightarrow \KK_G(A,B) \]
to be the composition
\[ \KK_G(A,B) \xrightarrow{\alpha_\sigma^B} \KK_G(A,B^\sigma) \xrightarrow{(\alpha_\sigma^A)^{-1}} \KK_G(A^\sigma,B^\sigma) \xrightarrow{\sigma^{-1}} \KK_G(A,B).\]
\end{definition}
In general $\tau_\sigma$ is not the identity map (we saw an example in Section \ref{sec:KHomGlobTrans}), but it does act as the identity on the image of the restriction map $\KK_{\langle \sigma \rangle \ltimes G}(A,B) \rightarrow \KK_G(A,B)$.

We next describe the relationship between $\tau_\sigma$ and the descent map $j_G$.  The $G$-equivariant $^\ast$-homomorphism $\alpha^A_\sigma \colon A \rightarrow A^\sigma$ induces a $^\ast$-homomorphism
\begin{equation}
\label{eqn:descentalpha}
\alpha^A_\sigma \colon G \ltimes A \rightarrow G \ltimes A^\sigma 
\end{equation}
(on $C_c(G,A)$ it is given by applying $\alpha^A_\sigma$ point-wise).  Recall that a $^\ast$-homomorphism $\alpha \colon A \rightarrow B$ determines an element $\alpha \in \KK(A,B)$, such that push-forward (resp. pull-back) by $\alpha$ in K-theory (resp. K-homology) are given by an appropriate $\KK$-product.  Thought of in this way, the corresponding element in $\KK(G\ltimes A,G\ltimes A^\sigma)$ is the image of $\alpha^A_\sigma \in \KK_G(A,A^\sigma)$ under the descent map $j_G$.

The group automorphism $\sigma \colon G \rightarrow G$ also induces a $^\ast$-homomorphism
\begin{equation} 
\label{eqn:descentsigma}
\sigma^A \colon G \ltimes A \rightarrow G \ltimes A^\sigma,
\end{equation}
given on $C_c(G,A)$ by $a \mapsto a^\sigma$, where $a^\sigma(g):=a(g^\sigma)$.
\begin{proposition}
\label{prop:DescentMapTwist}
Let $A$,$B$ be $\langle \sigma \rangle \ltimes G$-$C^\ast$ algebras.  For any $\st{x} \in \KK_G(A^\sigma,B^\sigma)$ one has the following equality in $\KK(G\ltimes A,G\ltimes B)$:
\[ j_G(\sigma^{-1}(\st{x}))=\sigma^A \otimes j_G(\st{x})\otimes (\sigma^B)^{-1}.\]
\end{proposition}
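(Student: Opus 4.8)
The plan is to prove the identity directly at the level of $\KK$-cycles; conceptually it says that descent $j_G$ intertwines the restriction map $\sigma^{-1}$ on $\KK_G$ with conjugation by the crossed-product isomorphisms of \eqref{eqn:descentsigma}. As a preliminary observation, since $\sigma$ preserves the Haar measure (hence also the modular function), the maps $\sigma^A\colon G\ltimes A\to G\ltimes A^\sigma$ and $\sigma^B\colon G\ltimes B\to G\ltimes B^\sigma$ are in fact $^\ast$-\emph{isomorphisms}, with inverses $a\mapsto a^{\sigma^{-1}}$ -- a routine change-of-variables check on $C_c(G,-)$ using $\sigma$-invariance of the Haar measure shows that $a\mapsto a^\sigma$ is multiplicative for the convolution product and compatible with the involution. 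Hence the right-hand side of the asserted equation makes sense and is the image of $j_G(\st{x})$ under these isomorphisms.

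First I would fix a cycle $(\E,\rho,F)$ for $\st{x}\in\KK_G(A^\sigma,B^\sigma)$, with isometric $G$-action $V=(V_g)_{g\in G}$ on $\E$. By definition of the restriction map, $\sigma^{-1}(\st{x})\in\KK_G(A,B)$ is represented by the \emph{same} triple $(\E,\rho,F)$, now regarded over $(A,B)$ with their original $G$-actions (using $(A^\sigma)^{\sigma^{-1}}=A$ and $(B^\sigma)^{\sigma^{-1}}=B$) and with the $G$-action on $\E$ replaced by $g\mapsto V_{\sigma^{-1}(g)}$. Applying $j_G$ produces a cycle over $(G\ltimes A,G\ltimes B)$: its Hilbert module is the completion of $C_c(G,\E)$ with the inner product and right action of Section \ref{sec:CrossProdDescent} (which involve only the $G$-actions on $B$, and are therefore unchanged), its representation of $G\ltimes A$ is the integrated form of $(V\circ\sigma^{-1},\rho)$, and its operator is $\ti{F}$ ($F$ applied pointwise). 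On the other side, $j_G(\st{x})$ is the usual cycle $(G\ltimes\E,\ti\rho,\ti F)$ over $(G\ltimes A^\sigma,G\ltimes B^\sigma)$; forming $\sigma^A\otimes j_G(\st{x})\otimes(\sigma^B)^{-1}$ pulls the representation back along $\sigma^A$ (so $a\in C_c(G,A)$ acts by $\ti\rho(a^\sigma)$, where $a^\sigma(g)=a(\sigma(g))$) and, since $(\sigma^B)^{-1}$ is an isomorphism, relabels the Hilbert-module structure on $G\ltimes\E$ through $\sigma^B$; the operator is again $\ti F$. It then remains to produce a unitary $\Phi$ from the first module to the second, intertwining all the structures, and I would take $\Phi\colon C_c(G,\E)\to C_c(G,\E)$, $(\Phi e)(g)=e(\sigma(g))$. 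I would then check in turn: (i) $\Phi$ intertwines the two crossed-product-valued inner products; (ii) $\Phi$ intertwines the two right module actions; (iii) $\Phi$ intertwines the two integrated-form representations of $G\ltimes A$; (iv) $\Phi\ti F=\ti F\Phi$. Item (iv) is immediate, since $\ti F$ acts pointwise in the $G$-variable and $\Phi$ only reparametrizes it. Items (i)--(iii) each reduce, after writing out the convolution formulas of Section \ref{sec:CrossProdDescent}, to the substitution $g_1\mapsto\sigma^{-1}(g_1)$ in an integral over $G$, legitimate precisely because $\sigma$ preserves the Haar measure; after the substitution the two sides match identically.

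The calculations in (i)--(iii) are mechanical; the main thing that can go wrong, and the point requiring care, is the bookkeeping of $\sigma$ versus $\sigma^{-1}$. One must pin down that restriction along $\sigma^{-1}$ replaces the $\E$-action $V$ by $V\circ\sigma^{-1}$ (not $V\circ\sigma$), that $(\sigma^B)^{-1}$ is $b\mapsto b^{\sigma^{-1}}$, and that $\Phi$ uses $\sigma$ rather than $\sigma^{-1}$ in its argument, so that the change of variables actually closes up; once these conventions are fixed, the computation goes through. Before committing to it, I would sanity-check the conventions on the trivial cycle $\st{x}=1_A$ and on the examples of Section \ref{sec:KHomGlobTrans}, where the twist $\tau_\sigma$ is visibly nontrivial.
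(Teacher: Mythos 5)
Your proposal is correct and follows essentially the same route as the paper's proof: fix a cycle $(\E,\rho,F)$ for $\st{x}$, write out the two $(G\ltimes A,G\ltimes B)$-bimodule structures explicitly, and show they are unitarily equivalent via the map $e\mapsto e^\sigma$, $e^\sigma(g)=e(g^\sigma)$, with the verification in each case reducing to a substitution $g_1\mapsto g_1^{\sigma^{-1}}$ that is legitimate by $\sigma$-invariance of the Haar measure. The bookkeeping of $\sigma$ versus $\sigma^{-1}$ that you flag as the delicate point is indeed exactly where the paper's proof spends its care.
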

\begin{proof}
Let $(\E,\rho,F)$ be a cycle representing $\st{x}$, thus in particular $\E$ is an $(A^\sigma,B^\sigma)$-bimodule with $B^\sigma$-valued inner product $(\cdot,\cdot)$.  To avoid confusion between the $G$-$C^\ast$ algebras $B$ and $B^\sigma$, we will write all formulas in terms of the action map $(g^\prime,b)\mapsto g^\prime.b$ for $B$, \emph{not} $B^\sigma$.  Thus, for example, the $G\ltimes B^\sigma$-valued inner product for $j_G(\st{x})$ is expressed as
\[ (e_1,e_2)_{G\ltimes B^\sigma}(g)=\int_G (g_1^\sigma)^{-1}.\big(e_1(g_1),e_2(g_1g)\big)\,\,dg_1,\]
i.e. $(g_1^\sigma)^{-1}$ appears in the formula instead of $g_1^{-1}$.  The $(G\ltimes A,G\ltimes B)$-bimodule structure for $\sigma^A \otimes j_G(\st{x})\otimes (\sigma^B)^{-1}$ is given by the formulas
\[ (a\cdot e)(g)=\int_G a(g_1^\sigma)g_1.e(g_1^{-1}g)\,\,dg_1, \qquad (e\cdot b)(g)=\int_G e(g_1)g_1^\sigma.b\big((g_1^\sigma)^{-1}g^\sigma\big)\,\,dg_1, \]
and the $G\ltimes B$-valued inner product is
\[ (e_1,e_2)_{G\ltimes B}(g)=\int_G (g_1^\sigma)^{-1}.\big(e_1(g_1),e_2(g_1g^{\sigma^{-1}})\big)\,\, dg_1.\]
On the other hand, the $(G\ltimes A,G\ltimes B)$-bimodule structure for $j_G(\sigma^{-1}(\st{x}))$ is given by the formulas
\[ (a \star e)(g)=\int_G a(g_1)g_1^{\sigma^{-1}}.e(g_1^{-1}g)\,\,dg_1, \qquad (e \star b)(g)=\int_G e(g_1)g_1.b(g_1^{-1}g)\,\,dg_1, \]
and the $G\ltimes B$-valued inner product is
\[ (e_1,e_2)^{\star}_{G\ltimes B}(g)=\int_G g_1^{-1}.\big(e_1(g_1),e_2(g_1g)\big)\,\,dg_1.\]
The formulas are not the same, but there is a linear map
\[ e \in C_c(G,\E) \mapsto e^\sigma \in C_c(G,\E), \qquad e^\sigma(g):=e(g^\sigma) \]
which intertwines the bimodule structures and $C_c(G,B)$-valued inner products, i.e. $(a \star e)^\sigma=a\cdot e^\sigma$, $(e \star b)^\sigma=e^\sigma\cdot b$ and $(e_1^\sigma,e_2^\sigma)_{G\ltimes B}=(e_1,e_2)^{\star}_{G\ltimes B}$ (one uses the $\sigma$-invariance of the Haar measure).  Thus, the map extends to an isometric isomorphism between the completions, intertwining the bimodule structures.
\end{proof}

\begin{definition}
Let $A$ be a $\langle \sigma \rangle \ltimes G$-$C^\ast$ algebra.  Let
\[ \tau^A_\sigma=(\alpha^A_\sigma)^{-1} \circ \sigma^A \colon G\ltimes A \rightarrow G\ltimes A,\]
where $\alpha^A_\sigma$ (resp. $\sigma^A$) is as in \eqref{eqn:descentalpha} (resp. \eqref{eqn:descentsigma}).  Then $\tau^A_\sigma$ is an automorphism of $G\ltimes A$.  On $C_c(G,A)$ it is given by the formula
\[ \tau^A_\sigma(f)(g)=(\alpha^A_\sigma)^{-1}(f(g^\sigma)).\]
The corresponding element of $\KK(G\ltimes A,G\ltimes A)$ is the $\KK$-product
\[ \tau^A_\sigma=\sigma^A \otimes (\alpha^A_\sigma)^{-1} \in \KK(G\ltimes A,G\ltimes A).\]
\end{definition}
The following is a corollary of Proposition \ref{prop:DescentMapTwist}.
\begin{corollary}
\label{prop:DescentMapTwist2}
Let $A$, $B$ be $\langle \sigma \rangle \ltimes G$-$C^\ast$ algebras.  For any $\st{x} \in \KK_G(A,B)$,
\[ j_G(\tau_\sigma(\st{x}))=\tau^A_\sigma \otimes j_G(\st{x}) \otimes (\tau^B_\sigma)^{-1}.\]
\end{corollary}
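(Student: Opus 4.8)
The plan is to derive the corollary purely formally from Proposition \ref{prop:DescentMapTwist} together with the compatibility of the descent map $j_G$ with the Kasparov product; there is no analytic input beyond Proposition \ref{prop:DescentMapTwist}, only bookkeeping of variances and of the identifications $(A^\sigma)^{\sigma^{-1}} = A$, $(B^\sigma)^{\sigma^{-1}} = B$ of $G$-$C^\ast$ algebras.

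First I would rewrite $\tau_\sigma$ as a string of Kasparov products. Denoting by $[\alpha^B_\sigma] \in \KK_G(B,B^\sigma)$ and $[(\alpha^A_\sigma)^{-1}] \in \KK_G(A^\sigma,A)$ the classes of these $G$-equivariant $^\ast$-isomorphisms, the three arrows in the definition of $\tau_\sigma$ are post-composition with $[\alpha^B_\sigma]$, pre-composition (pullback) along $(\alpha^A_\sigma)^{-1}$, and the restriction homomorphism $\sigma^{-1}$. Hence, for $\st{x} \in \KK_G(A,B)$,
\[ \tau_\sigma(\st{x}) = \sigma^{-1}\big([(\alpha^A_\sigma)^{-1}] \otimes \st{x} \otimes [\alpha^B_\sigma]\big), \]
where the class inside $\sigma^{-1}$ lies in $\KK_G(A^\sigma,B^\sigma)$ and $\sigma^{-1}$ is the restriction map to $\KK_G\big((A^\sigma)^{\sigma^{-1}},(B^\sigma)^{\sigma^{-1}}\big) = \KK_G(A,B)$.

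Next I would apply $j_G$ and invoke Proposition \ref{prop:DescentMapTwist} with $\st{y} := [(\alpha^A_\sigma)^{-1}] \otimes \st{x} \otimes [\alpha^B_\sigma] \in \KK_G(A^\sigma,B^\sigma)$, giving $j_G(\tau_\sigma(\st{x})) = \sigma^A \otimes j_G(\st{y}) \otimes (\sigma^B)^{-1}$. Functoriality of $j_G$ with respect to the Kasparov product then yields $j_G(\st{y}) = j_G([(\alpha^A_\sigma)^{-1}]) \otimes j_G(\st{x}) \otimes j_G([\alpha^B_\sigma])$, and by the cycle-level description of $j_G$ in Section \ref{sec:CrossProdDescent} (as already used in the discussion preceding Proposition \ref{prop:DescentMapTwist}) the descent of the class of a $G$-equivariant $^\ast$-homomorphism is the class of the induced homomorphism on crossed products; thus $j_G([\alpha^B_\sigma])$ is the class of $\alpha^B_\sigma\colon G\ltimes B \to G\ltimes B^\sigma$ from \eqref{eqn:descentalpha}, and $j_G([(\alpha^A_\sigma)^{-1}])$ is the class of its inverse on crossed products. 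Substituting,
\[ j_G(\tau_\sigma(\st{x})) = \sigma^A \otimes (\alpha^A_\sigma)^{-1} \otimes j_G(\st{x}) \otimes \alpha^B_\sigma \otimes (\sigma^B)^{-1}, \]
where now every factor is understood on the level of crossed products.

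Finally I would match the outer factors with the definitions of $\tau^A_\sigma$ and $\tau^B_\sigma$. By definition $\tau^A_\sigma = \sigma^A \otimes (\alpha^A_\sigma)^{-1} \in \KK(G\ltimes A, G\ltimes A)$, which is precisely the first two factors above. Since $\tau^B_\sigma = \sigma^B \otimes (\alpha^B_\sigma)^{-1}$ is a product of invertible $\KK$-classes, $(\tau^B_\sigma)^{-1} = \big((\alpha^B_\sigma)^{-1}\big)^{-1} \otimes (\sigma^B)^{-1} = \alpha^B_\sigma \otimes (\sigma^B)^{-1}$, using that the $\KK$-inverse of the class of an isomorphism of $C^\ast$ algebras is the class of the inverse isomorphism; this is the last two factors. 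Hence $j_G(\tau_\sigma(\st{x})) = \tau^A_\sigma \otimes j_G(\st{x}) \otimes (\tau^B_\sigma)^{-1}$, as claimed. The only delicate point — and the (mild) main obstacle — is the bookkeeping of variances: verifying that $(A^\sigma)^{\sigma^{-1}} = A$ so that $\sigma^{-1}$ lands in the correct group, and that the pullback/pushforward directions of the three arrows defining $\tau_\sigma$ are correctly translated into the one-line Kasparov-product formula above, so that Proposition \ref{prop:DescentMapTwist} applies verbatim to $\st{y}$.
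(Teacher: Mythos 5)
Your argument is correct and is exactly the one the paper leaves implicit (the paper simply asserts that the result follows from Proposition \ref{prop:DescentMapTwist} without writing out a proof). You correctly rewrite $\tau_\sigma(\st{x})$ as $\sigma^{-1}\big([(\alpha^A_\sigma)^{-1}]\otimes\st{x}\otimes[\alpha^B_\sigma]\big)$, apply Proposition \ref{prop:DescentMapTwist} to this element of $\KK_G(A^\sigma,B^\sigma)$, use functoriality of $j_G$ under Kasparov products and the fact that $j_G$ sends the class of a $G$-equivariant $^\ast$-homomorphism to the class of the induced map on crossed products, and then regroup the five factors using $\tau^A_\sigma=\sigma^A\otimes(\alpha^A_\sigma)^{-1}$ and $(\tau^B_\sigma)^{-1}=\alpha^B_\sigma\otimes(\sigma^B)^{-1}$; the variance bookkeeping (in particular $(A^\sigma)^{\sigma^{-1}}=A$ and the directions of push-forward/pull-back in the definition of $\tau_\sigma$) is all handled correctly.
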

Similar to $\tau_\sigma$, $\tau^A_\sigma$ acts trivially on elements which are $\langle \sigma \rangle \ltimes G$-equivariant:
\begin{proposition}
\label{prop:DescentMapTwist3}
Let $\ti{G}=\langle \sigma \rangle \ltimes G$.  Let $A$ be a $\ti{G}$-$C^\ast$ algebra.  Let 
\[\iota_G \colon G\ltimes A \rightarrow \ti{G} \ltimes A\] 
denote the $^\ast$-homomorphism induced by the open inclusion $G \hookrightarrow \ti{G}$.  Then
\[ \tau^A_\sigma \otimes \iota_G = \iota_G \in \KK(G\ltimes A,\ti{G} \ltimes A).\]
\end{proposition}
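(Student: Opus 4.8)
The plan is to show that the composite $\ast$-homomorphism $\iota_G\circ\tau^A_\sigma\colon G\ltimes A\to\ti G\ltimes A$ is an \emph{inner} perturbation of $\iota_G$ inside the target algebra, and then to invoke the elementary fact that conjugating a $\ast$-homomorphism by a unitary multiplier of its codomain does not change its class in $\KK$-theory. Since the Kasparov product of the classes of two composable $\ast$-homomorphisms is the class of their composite, we have $\tau^A_\sigma\otimes\iota_G=[\iota_G\circ\tau^A_\sigma]$ in $\KK(G\ltimes A,\ti G\ltimes A)$, so it suffices to prove $[\iota_G\circ\tau^A_\sigma]=[\iota_G]$.

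First I would work inside the multiplier algebra $M(\ti G\ltimes A)$, which contains the canonical strictly continuous unitary representation $g\mapsto u_g$ of $\ti G$, characterized by $u_g\,a\,u_g^\ast=g.a$ for $a\in A$ and $u_gu_h=u_{gh}$. Writing $f\in C_c(\ti G,A)$ formally as $f=\int f(g)u_g\,dg$ and substituting $h=\sigma^{-1}g\sigma$ in the resulting integral --- legitimate precisely because $\sigma$ preserves the Haar measure of $\ti G$ --- one finds
\[ \big(\Ad_{u_\sigma^{-1}}f\big)(g)=\sigma^{-1}.\big(f(\sigma g\sigma^{-1})\big).\]
Restricting this to $f\in C_c(G,A)\subset C_c(\ti G,A)$ and $g\in G$, and using the defining relation $\sigma g\sigma^{-1}=g^\sigma$ of $\ti G=\langle\sigma\rangle\ltimes G$, the right-hand side becomes $\sigma^{-1}.(f(g^\sigma))$; by the formula $\tau^A_\sigma(f)(g)=(\alpha^A_\sigma)^{-1}(f(g^\sigma))=\sigma^{-1}.(f(g^\sigma))$ this is exactly $\big(\iota_G\circ\tau^A_\sigma\big)(f)(g)$. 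Hence $\iota_G\circ\tau^A_\sigma=\Ad_{u_\sigma^{-1}}\circ\,\iota_G$ on the dense subalgebra $C_c(G,A)$, and therefore on all of $G\ltimes A$.

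To finish, I would record the general principle: for any $\ast$-homomorphism $\varphi\colon B\to C$ and any unitary $v\in M(C)$, the cycles $(C,\varphi,0)$ and $(C,\Ad_v\circ\varphi,0)$ --- representing $[\varphi]$ and $[\Ad_v\circ\varphi]$ in $\KK(B,C)$ --- are unitarily equivalent via left multiplication $L_v$ on the standard Hilbert $C$-module $C$, so $[\Ad_v\circ\varphi]=[\varphi]$. Applied with $\varphi=\iota_G$ and $v=u_\sigma^{-1}\in M(\ti G\ltimes A)$, together with the identity of the previous paragraph, this yields $\tau^A_\sigma\otimes\iota_G=[\iota_G\circ\tau^A_\sigma]=[\Ad_{u_\sigma^{-1}}\circ\iota_G]=[\iota_G]=\iota_G$, as claimed. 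The only delicate point is the bookkeeping in the displayed formula for $\Ad_{u_\sigma^{-1}}$ on $C_c(\ti G,A)$, in particular verifying that no modular-function factors intervene; this holds because $\langle\sigma\rangle$ is discrete and $\sigma$ preserves the Haar measure of $G$, but it is the step I would check most carefully.
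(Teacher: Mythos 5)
Your argument is correct and follows essentially the same route as the paper: both proofs rest on showing that the discrepancy between $\iota_G\circ\tau^A_\sigma$ and $\iota_G$ is conjugation by a unitary multiplier of $\ti G\ltimes A$ (the paper phrases this by extending $\tau^A_\sigma$ to an inner automorphism $\ti\tau^A_\sigma$ of $\ti G\ltimes A$ and then citing the general fact that inner automorphisms act trivially on $\KK$). Your version is slightly more explicit in pinning down the unitary as $u_\sigma^{-1}$ and in spelling out the unitary-equivalence of cycles, but there is no substantive difference in strategy.
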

\begin{proof}
The automorphism $\tau^A_\sigma \in \Aut(G\ltimes A)$ extends to an automorphism of $\ti{G} \ltimes A$, given by the same formula with $\ti{g}^\sigma:=\Ad_\sigma(\ti{g}) \in \ti{G}$.  Thus
\[ \iota_G \circ \tau^A_\sigma=\ti{\tau}^A_\sigma \circ \iota_G,\]
where $\ti{\tau}^A_\sigma$ denotes the extended automorphism.  In fact $\ti{\tau}^A_\sigma$ is an inner automorphism, i.e. there exists a unitary $u_\sigma$ in the multiplier algebra of $\ti{G} \ltimes A$ such that $\ti{\tau}^A_\sigma=\Ad_{u_\sigma}$, cf. \cite[II.10.3.10]{BlackadarCAlgebras}.  The result follows from the fact that any inner automorphism of a $C^\ast$ algebra $D$ induces the identity element in $\KK(D,D)$.
\end{proof}

\section{Schr\"odinger-type operators.}\label{sec:SchrodingerOps}
If $A$ is a self-adjoint operator on a Hilbert space $H$ with domain $\dom(A)$ and spectrum in $[1,\infty)$, then one defines an associated positive definite quadratic form
\[ q_A(u_1,u_2)=(Au_1,u_2)\]
for all $u_1,u_2 \in \dom(A)$.  The completion of $\dom(A)$ using the inner product $q_A$ is a Hilbert space $\dom(q_A)$ which can be identified with $\dom(A^{1/2})$, and is known as the \emph{form domain} of $A$ (cf. \cite[VIII.6]{ReedSimonI}).  Given self-adjoint operators $A,B$ with spectrum in $[1,\infty)$ one writes
\[ A \ge B\]
if
\[ \dom(q_A) \subset \dom(q_B) \quad \text{and} \quad q_A(v,v)\ge q_B(v,v) \quad \forall v \in \dom(q_A)\]
(cf. \cite[XIII.2, p.85]{ReedSimonIV}).  Equivalently, $A \ge B$ if the inclusion mapping
\[ (\dom(q_A),q_A) \hookrightarrow (\dom(q_B),q_B)\]
is norm-decreasing.  It is enough to check the inequality $q_A(v,v)\ge q_B(v,v)$ on a core for $A$.  More generally if $A,B$ are self-adjoint operators with spectrum in $[-c,\infty)$ for some $c \ge 0$ then one writes $A \ge B$ if $A+c+1 \ge B+c+1$.

\begin{proposition}
\label{CompactnessBoundedBelow}
Let $\Y$ be a complete Riemannian manifold.  Let $\st{D}$ be a symmetric $1^{st}$ order elliptic differential operator with finite propagation speed acting on sections of a Hermitian vector bundle $S$, and let $H=L^2(\Y,S)$.  Let $\rho$, $V$ be continuous functions such that $\rho$ is bounded, $V$ is bounded below, and $V$ is proper on the support of $\rho$.  Let $A$ be a self-adjoint operator with spectrum in $(0,\infty)$, and suppose
\[ A \ge \st{D}^2+V.\]
Then the operator $\rho A^{-1}$ is compact. 
\end{proposition}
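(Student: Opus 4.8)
The plan is to reduce the statement, by elementary manipulations with positive operators, to the combination of two facts: the local compactness coming from ellipticity (the Rellich‑type Proposition~\ref{RellichLemma}), and the fact that $V\to\infty$ along $\supp(\rho)$. First I would normalize: since the spectrum of $A$ is a closed subset of $(0,\infty)$, it is bounded below by a positive constant, and replacing $A$, $V$ by $A+c+1$, $V+c+1$ (with $-c$ a lower bound for $V$) changes neither the hypotheses nor the conclusion — the two resolvents then differ by a bounded operator. So I may assume $V\ge 1$, hence $A\ge \st{D}^2+V\ge 1$, and $A^{-1}$, $B:=(\st{D}^2+V)^{-1}$ and multiplication by $V^{-1}$ are all bounded of norm $\le 1$. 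Here $\st{D}^2+V$ denotes the form sum, i.e.\ the self-adjoint operator attached to the closed form $u\mapsto \|\st{D}u\|^2+\int_\Y V|u|^2\,d\vol$ (the sum of two closed positive forms).

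Next I would record the operator inequalities. From $A\ge \st{D}^2+V\ge \st{D}^2+1$ and $A\ge \st{D}^2+V\ge V$ as quadratic forms (the latter because $\st{D}^2\ge 0$), operator monotonicity of $t\mapsto t^{-1}$ gives
\[ A^{-1}\le B=(\st{D}^2+V)^{-1}\le (\st{D}^2+1)^{-1}, \qquad B\le V^{-1}. \]
Conjugating the first inequality by the self-adjoint multiplication operator $\rho$ yields $0\le \rho A^{-1}\rho\le \rho B\rho$. A positive operator dominated by a compact positive one is compact, so once $\rho B\rho$ is shown compact we get $\rho A^{-1}\rho=(\rho A^{-1/2})(\rho A^{-1/2})^\ast$ compact, hence $\rho A^{-1/2}$ compact (as $TT^\ast$ compact $\Rightarrow T$ compact), hence $\rho A^{-1}=(\rho A^{-1/2})A^{-1/2}$ compact since $A^{-1/2}$ is bounded. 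Thus it remains to prove $\rho B\rho$ is compact.

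For this, given $R>0$ the set $\supp(\rho)\cap\{V\le R\}$ is compact (properness of $V$ on $\supp\rho$), so I would pick $\psi\in C_c(\Y)$ with $0\le\psi\le 1$ and $\psi\equiv 1$ on a neighborhood of it; then $\rho(y)\ne 0$ and $\psi(y)\ne 1$ force $V(y)>R$. Writing $\rho=\psi\rho+(1-\psi)\rho$ and expanding $\rho B\rho$ into four terms, the three terms carrying a factor $(1-\psi)\rho$ are controlled by $B\le V^{-1}$: one gets $\|B^{1/2}(1-\psi)\rho\|^2\le \|\rho\|_\infty^2/R$, so their sum has norm $O(R^{-1/2})$. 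The remaining term is $\rho(\psi B\psi)\rho$, and $\psi B\psi$ is compact because $\psi B\psi\le \psi(\st{D}^2+1)^{-1}\psi=\psi(\st{D}+\i)^{-1}(\st{D}-\i)^{-1}\psi$, a product of operators of the form $\psi\,\chi(\st{D})$ with $\chi\in C_0(\bR)$ and $\psi\in C_c(\Y)$, which are compact by Proposition~\ref{RellichLemma} (using that $\st{D}$ is essentially self-adjoint by Chernoff's theorem). Letting $R\to\infty$ exhibits $\rho B\rho$ as a norm-limit of compact operators.

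The one delicate point is precisely this last step: neither the localized resolvent $\psi(\st{D}^2+1)^{-1}\psi$ alone (which forgets the decay of $B$ at infinity) nor the crude bound $B\le V^{-1}$ alone (multiplication operators are never compact) suffices, and one must splice the $\psi$‑localized Rellich compactness together with the smallness of $B$ on the region $\{V>R\}$. The remaining steps — the form‑inequality bookkeeping and the passage between $\rho A^{-1}$, $\rho A^{-1/2}$ and $\rho A^{-1}\rho$ — are routine.
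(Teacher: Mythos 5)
Your proof is correct, and it takes a genuinely different technical route from the paper's, although the two rest on the same two pillars (local Rellich compactness plus the decay of $V$ on $\supp\rho$). The paper factors $\rho A^{-1}$ through the form domain $H(q)=\dom(q_{\st{D}^2+V})$ viewed as a Hilbert space: $A^{-1}:H\to\dom(A)$ is bounded, $\dom(A)\hookrightarrow\dom(q_A)\hookrightarrow H(q)$ is bounded because $A\ge\st{D}^2+V$, and then it shows directly that $M_\rho:H(q)\to H$ is compact by writing $\rho=g_n\rho+(1-g_n)\rho$ with $g_n$ supported in $\{V\le n+1\}$, using Rellich on the first piece and the bound $q(u,u)\ge\int_{\Y\setminus W_n}n|u|^2$ on the second. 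You instead stay in $H$ throughout and use operator inequalities: monotonicity of $t\mapsto t^{-1}$ to get $A^{-1}\le B:=(\st{D}^2+V)^{-1}$ together with $B\le(\st{D}^2+1)^{-1}$ and $B\le V^{-1}$, the fact that a positive operator dominated by a compact positive operator is compact, and the implication $TT^\ast$ compact $\Rightarrow$ $T$ compact. These are all legitimate, and your splitting $\rho=\psi\rho+(1-\psi)\rho$ with the estimate $\|B^{1/2}(1-\psi)\rho\|\le R^{-1/2}\|\rho\|_\infty$ is exactly the right analogue of the paper's truncation. The trade-off is that the paper's form-domain factorization gets $\rho A^{-1}$ in one shot, whereas your argument passes through $\rho A^{-1}\rho$ and $\rho A^{-1/2}$ before reaching $\rho A^{-1}$; in exchange, you avoid ever introducing the auxiliary Hilbert space $H(q)$ and work with resolvents and standard domination lemmas in $H$, which some readers may find more transparent. (One small remark: note that $B^{1/2}:H\to H(q)$ is in fact unitary, so the compactness of $M_\rho$ on $H(q)$ is literally equivalent to the compactness of $\rho B^{1/2}$, i.e.\ of $\rho B\rho$; so the two proofs are dual to each other at the crucial step, even though they are phrased quite differently.)
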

\begin{remark}
Consider the special case when $V$ is proper and bounded below.  The operator $\st{D}^2+V$ is a Schr\"{o}dinger-type operator with potential going to infinity at infinity.  In this case, it is known that $\st{D}^2+V$ has discrete spectrum.  Many proofs of this appear in the literature, cf. \cite{ShubinSchrodinger} (for $\st{D}^2=-\Delta$), \cite{KucerovskyCallias} (for a proof based on a method of Gromov-Lawson).  It is also closely related to a Fredholm criterion of Anghel \cite{Anghel1993}, and to the property of being `$\kappa$-coercive' for all $\kappa>0$ in \cite[Corollary 5.6]{BarBallmannGuide}.
\end{remark}
\begin{proof}
Note that if $\rho(A+c)^{-1}$ is compact for some $c>0$, then so is $\rho A^{-1}$ since
\[ \rho A^{-1}-\rho(A+c)^{-1}=c\rho(A+c)^{-1}A^{-1}\]
and the right hand side is compact since $\rho(A+c)^{-1}$ is compact and $A^{-1}$ is bounded.  Therefore we may as well assume that $V \ge 1$ and the spectrum of $A$ is contained in $[1,\infty)$.

The operator $\st{D}^2+V$ is self-adjoint (cf. \cite[Theorem 4.6]{ChernoffSchrodinger}) and positive.  Let $q$ denote the corresponding quadratic form, defined initially on the domain of $\st{D}^2+V$ by
\[ q(u_1,u_2)=\big((\st{D}^2+V)u_1,u_2\big)\]
and then extended to the completion $H(q)=\dom(q) \subset H$ of the domain of $\st{D}^2+V$ with respect to the inner product $q$.  Since
\[ q(u,u)=\|\st{D}u\|^2+\|\sqrt{V}u\|^2\]
the Hilbert space $H(q)$ can be described more simply as
\[ H(q)=\{u \in \dom(\st{D})|\sqrt{V}u \in H\}.\]
The operator $A^{-1}$ defines a bounded linear map $H \rightarrow \dom(A)$, where the domain $\dom(A)$ of $A$ is equipped with the $A$-norm $\|u\|_A:=\|Au\|$ (cf. \cite[VIII]{ReedSimonI}).  The Cauchy-Schwartz inequality and $q_A(u,u)\ge q(u,u)$ imply that the inclusion maps
\[ \dom(A) \subset \dom(q_A) \hookrightarrow H(q) \]
are bounded.  Thus $\rho A^{-1}$ factors as a composition of bounded linear maps:
\[ H \xrightarrow{A^{-1}}\dom(A) \hookrightarrow H(q) \xrightarrow{M_\rho} H\]
where $M_\rho$ denotes the operator given by multiplication by the function $\rho$.  It therefore suffices to show that the map
\[ M_\rho \colon H(q) \rightarrow H \]
is a compact mapping.  Without loss of generality assume $|\rho|\le 1$.  For $n \in \bZ_{\ge 0}$ let 
\[ W_n=V^{-1}((-\infty,n]).\] 
By assumption $\supp(\rho)\cap W_n$ is compact.  For each $n$, let $g_n:\Y \rightarrow [0,1]$ be a bump function equal to $1$ on $W_n$ and supported in $W_{n+1}$.  Since $g_n\rho$ has compact support, 
\[ M_{g_n\rho}\colon H(q) \rightarrow H\]
is compact, by the Rellich lemma.  We claim that the difference
\[ M_\rho-M_{g_n\rho}=M_{(1-g_n)\rho} \]
converges to zero in the norm topology on the space $\bB(H(q),H)$ of bounded operators $H(q) \rightarrow H$, which implies that $M_\rho$ is also compact.  Indeed, if $q(u,u)\le 1$ then
\[ 1 \ge q(u,u) \ge \int_{\Y \setminus W_n} n|u|^2 \qquad \Rightarrow \qquad \int_{\Y \setminus W_n}|u|^2 \le \tfrac{1}{n}.\]
Thus
\[ \|M_{(1-g_n)\rho}\|^2=\sup_{q(u,u)\le 1} \, \, \int_{\Y} (1-g_n)^2|\rho u|^2 \le \sup_{q(u,u)\le 1} \, \, \int_{\Y \setminus W_{n}}|u|^2 \le \tfrac{1}{n},\]
which goes to $0$ as $n \rightarrow \infty$.
\end{proof}

\begin{remark}
\label{CompactIsotypicComponents}
More generally, suppose a compact group $K$ acts isometrically on $\Y$, commuting with $A$, $\st{D}$ and that $V$ is $K$-invariant.  The Hilbert space $H$ decomposes into isotypic components
\[ H=\bigoplus_{\pi \in \Irr(K)} H_\pi,\]
and the operator $A$ decomposes accordingly into self-adjoint operators $A_\pi$ with $\dom(A_\pi)=\dom(A)\cap H_\pi \subset H_\pi$, and similarly for $\st{D}^2+V$.  Suppose the conditions in Proposition \ref{CompactnessBoundedBelow} are satisfied except that the inequality holds only on $H_\pi$, that is
\[ A_\pi \ge (\st{D}^2+V)_\pi. \]
Essentially the same argument, with $\rho A_\pi^{-1}$ factored as
\[ H_\pi \xrightarrow{A_\pi^{-1}} \dom(A_\pi) \hookrightarrow 
H(q)\xrightarrow{M_\rho} H\]
shows that $\rho A_\pi^{-1}:H_\pi \rightarrow H$ is a compact operator.
\end{remark}

\bibliographystyle{amsplain} 
\bibliography{../Biblio}
\end{document}